\numberwithin{equation}{section}
\numberwithin{figure}{section}
\theoremstyle{plain}
\newtheorem{thm}{\protect\theoremname}
  \theoremstyle{definition}
  \newtheorem{defn}[thm]{\protect\definitionname}
  \theoremstyle{plain}
  \newtheorem{lem}[thm]{\protect\lemmaname}
  \theoremstyle{plain}
  \newtheorem{cor}[thm]{\protect\corollaryname}
  \theoremstyle{plain}
  \newtheorem{prop}[thm]{\protect\propositionname}
  \theoremstyle{remark}
\numberwithin{thm}{section}
\newcommand{\blue}[1]{#1}
\newcommand{\R}{ {\mathbb{R}} }
\newcommand{\E}{ {\mathbb{E}} }
\renewcommand{\P}{ {\mathbb{P}} }
\theoremstyle{plain}
\renewcommand{\d}{\delta}
\renewcommand{\and}{\quad\textrm{ and }\quad}
\renewcommand{\S}{\mathcal{S}}
\renewcommand{\P}{\mathbb{P}}
\newcommand{\N}{\mathbb{N}}
\newcommand{\mcA}{\mathcal{A}}
\newcommand{\sgn}{\textrm{sgn}}
\numberwithin{equation}{section}
  \providecommand{\corollaryname}{Corollary}
  \providecommand{\definitionname}{Definition}
  \providecommand{\lemmaname}{Lemma}
  \providecommand{\propositionname}{Proposition}
  \providecommand{\remarkname}{Remark}
\providecommand{\theoremname}{Theorem}
  \providecommand{\corollaryname}{Corollary}
  \providecommand{\definitionname}{Definition}
  \providecommand{\lemmaname}{Lemma}
  \providecommand{\propositionname}{Proposition}
  \providecommand{\remarkname}{Remark}
\providecommand{\theoremname}{Theorem}
\date{\today}
\begin{document}

\title[speed of propagation for stochastic Hamilton-Jacobi equations]{speed of propagation for Hamilton-Jacobi equations with multiplicative rough time dependence and convex Hamiltonians}

\author[P. Gassiat]{Paul Gassiat}
\address{Ceremade, Universit\'e de Paris-Dauphine\\
Place du Mar\'echal-de-Lattre-de-Tassigny\\
75775 Paris cedex 16, France}
\email{gassiat@ceremade.dauphine.fr}

\author[B. Gess]{Benjamin Gess}
\address{Max Planck Institute for Mathematics in the Sciences, Inselstrasse 22, 04103 Leipzig and Faculty of Mathematics,
University of Bielefeld,
33615 Bielefeld,
Germany}
\email{benjamin.gess@gmail.com}

\author[P-L. Lions]{Pierre-Louis Lions}
\address{Coll\`{e}ge de France,
11 Place Marcelin Berthelot, 75005 Paris, 
and  
CEREMADE, 
Universit\'e de Paris-Dauphine,
Place du Mar\'echal de Lattre de Tassigny,
75016 Paris, France}
\email{lions@ceremade.dauphine.fr}

\author[P. E. Souganidis]{Panagiotis E. Souganidis}
\address{Department of Mathematics 
University of Chicago, 
5734 S. University Ave.,
Chicago, IL 60637, USA}
\email{souganidis@math.uchicago.edu}


\begin{abstract}
We show that the initial value problem for Hamilton-Jacobi equations with multiplicative rough time dependence, typically stochastic,  and convex Hamiltonians satisfies finite speed of propagation. We prove that in general the range of dependence is bounded by a multiple of the length of the ``skeleton'' of the path, that is a piecewise linear  path obtained by connecting  the successive extrema of the original one. When the driving path is a Brownian motion, we prove that its skeleton has almost surely finite length. We also discuss the optimality of the estimate. 
\end{abstract}

\maketitle

\section{Introduction}

We consider the initial value problem  for Hamilton-Jacobi equations with multiplicative rough time dependence, that is 
\begin{equation}
du=H(Du,x)\cdot d{\xi}\ \ \text{in }\ \R^{d}\times (0,T] \qquad u(\cdot,0)=u_0 \ \  \text{in} \ \  \R^d,\label{eq:HJ-intro}
\end{equation}
with 
\begin{equation}\label{takis1}
\text{$H:\R^d \times \R^d \to \R$ convex and Lipschitz continuous in the first argument}
\end{equation}
and
\begin{equation}\label{takis2}
\text{$\xi \in C_0([0,T]),$}
\end{equation}
where $C_0([0,T])$ denotes the space of continuous paths $\xi: [0,T] \to \R$ such that $\xi(0)=0.$  
\smallskip

When $\xi$ is a $C^1$ or $\text{BV}$-path,   \eqref{eq:HJ-intro} is the standard Hamilton-Jacobi equation that is studied using the Crandall-Lions theory of viscosity solutions. For such paths, in place of \eqref{eq:HJ-intro}  we will often write 
\begin{equation}\label{takis10}
u_t=H(Du,x)\dot {\xi}\ \ \text{in }\ \R^{d}\times (0,T] \qquad u(\cdot,0)=u_0 \ \  \text{in} \ \  \R^d\end{equation}

When $\xi$ is merely continuous, in \eqref{eq:HJ-intro} $\cdot$ simply denotes the way the path enters the equation. When $\xi$ is  a Brownian motion, then  $\cdot d\xi$ stands for the classical Stratonovich differential.
\smallskip

Lions and Souganidis introduced in \cite{LS98a} the notion of stochastic or pathwise viscosity solutions for a general class of equations which contain \eqref{eq:HJ-intro}
as a special case and studied its well-posedness; for this as well as further properties see Lions-Souganidis \cite{LS98a,LS98b,LS-college, LS18+, S16}.
\smallskip

One of the questions raised in \cite{S16} was whether  \eqref{eq:HJ-intro} has a finite speed of propagation, which is one of the important characteristics  of the hyperbolic nature of the equations for regular paths. Roughly speaking, finite speed of propagation means that, if two solutions agree at some time  in a  ball, then they agree on a forward cone with a time dependent radius. 
\smallskip

\blue{Hamilton-Jacobi equations with rough time dependence appear for example in the theory of mean field games with common noise (e.g.\ Lasry, Lions \cite{LL061,LL061}, Carmona, Delarue \cite{CD18}), as limits of interacting particle systems with common noise (e.g.\ \cite{LPS13,GS15,GS17,FG19,CG19}), in pathwise optimal control, and in stochastic geometric motions (e.g.\ Lions, Souganidis \cite{LS98a,LS98b,LS-college, LS18+, S16}). In particular, we recall that the stochastic mean curvature flow
  $$dv-|\mathrm{D}v|\mathrm{div}\left(\frac{\mathrm{D}v}{|\mathrm{D}v|}\right)dt+ \phi(x)|\mathrm{D}v|\circ d\beta_t =0\quad\text{in }\R^d \times(0,T],$$
describes the geometric motion of a surface by mean curvature with a spatially homogeneous Brownian perturbation acting in the normal direction to the surface (see Souganidis \cite{S16} and Es-Sarhir, Renesse \cite{ER12} for details and notation). In this case of geometric stochastic PDE, the problem of finite speed of propagation is linked to the speed of stochastic front propagation.}
\smallskip

A partial result in this direction was shown in Lions and Souganidis~ \cite{LS98b} (see also Souganidis \cite{S16}), while Gassiat showed in \cite{Gas17} that, in general,  when $H$ is neither convex nor concave \eqref{eq:HJ-intro} does not have the finite speed of propagation property.

\smallskip 

In this work, assuming \eqref{takis1} and \eqref{takis2}, we establish finite speed of propagation in the sense formulated precisely next.
\vskip.05in

Given $T>0$ and $H :\R^d \times \R^d \to \R$ let 
\begin{align}
\rho_{H}(\xi,T):=\sup\Big\{ R\ge0:\  \text{there exist solutions} & \ \ u^{1},u^{2}\mbox{ of }\eqref{eq:HJ-intro} \text{ and } x \in \R^d, \\  \text{ such that  }  u^{1}(\cdot, 0)=u^{2}(\cdot, 0) &\mbox{ in }B_R(x)\ \label{eq:defR-2}
  \text{and }u^{1}(x, T)\ne u^{2}(x, T)\Big\}, \nonumber 
\end{align}
where $B_R(x)$ is the ball in $\R^d$ centered at $x$ with radius $R$.
\smallskip

The  classical theory  for Hamilton-Jacobi equations (see Lions~\cite{LionsBook} and Crandall and Lions~\cite{CL92}) yields that, if $\xi$ is a $C^1$- or, more generally, a  $\text{BV}$-path,  then 
\begin{equation}
\rho_{H}(\xi,T)\leq L \|\xi\|_{TV([0,T])},\label{eq:TrivialBound}
\end{equation}
where 
\[
\|\xi\|_{TV([0,T])}:=\sup_{0=t_{0}\leq\ldots\leq t_{n}=T}\sum_{i=0}^{n-1}\left|\xi(t_{i+1})-\xi(t_{i})\right| 
\]
is the total variation semi-norm of $\xi$ and $L$ is the Lipschitz constant of $H$. It is easy to see that \eqref{eq:TrivialBound} is sharp when $\dot {\xi}\equiv1$.
\smallskip


For general rough, that is only continuous,  signal $\xi$ it was shown  in \cite{LS98b},  \cite{S16} that, if $H(p,x)=H_{1}(p)-H_{2}(p)$, where $H_{1}$, $H_{2}$ satisfy \eqref{takis1} with Lipschitz constant $L$ and $H_{1}(0)=H_{2}(0)=0$, then, for any constant $A$, if
\[
u(0,\cdot)\equiv A\text{ on }B_{R}(0), 
\]
then 
\begin{equation}\label{eq:fsp-2}
  u(t,\cdot)\equiv A\text{ on }B_{R(t)}(0),\quad\text{for }R(t):=R-L(\max_{s\in[0,\blue{t}]}\xi(s)-\min_{s\in[0,\blue{t}]}\xi(s)). 
\end{equation}
This does not, however, imply a finite range of dependence 
\blue{in the sense of a bound on $\rho_{H}(\xi,T)$}. 
\smallskip

In fact, it was shown in \cite{Gas17} that when   $H(p)=|p_{1}|-|p_{2}|$ equality is attained in \eqref{eq:TrivialBound} for all continuous $\xi$, a fact which implies that there is no finite domain of dependence if $\xi\not\in BV([0,T])$. In other words, the counter-example in \cite{Gas17} shows that for non-convex Hamiltonian $H$, all of the oscillations of $\xi$, measured in terms of the $TV$-norm, are relevant for the dynamics of \eqref{eq:HJ-intro}.
\smallskip

In contrast, in this paper we show that, if $H$ is convex, there is an estimate, which is  better than \eqref{eq:TrivialBound}, and, in particular, implies that the \blue{range} of dependence $\rho_{H}(\xi,T)$ is almost surely finite when $\xi$ is a Brownian path. This new  bound 
relies on a better understanding of which oscillations of the signal $\xi$ are effectively relevant for the dynamics of \eqref{eq:HJ-intro}. 
\smallskip

In this spirit, we prove that, if $H$ is convex, then $\xi$ can be replaced by its skeleton. This is a reduced path $R_{0,T}(\xi)$ 
which keeps track solely of the oscillations of $\xi$ that are relevant for the dynamics of \eqref{eq:HJ-intro} without changing the solution to \eqref{eq:HJ-intro}. Hence, in the convex case only the oscillations of $\xi$ encoded in $R_{0,T}(\xi)$ are relevant for \eqref{eq:HJ-intro}. \blue{The proof of this fact presented in this work relies on the representation of viscosity solutions to Hamilton-Jacobi equations via the associated optimal control problem. At this point, the convexity of the Hamiltonian $H$ is essential. In contrast, in non-convex cases a representation in terms of a differential game would have to be used instead, which was the basis of the counter-example in \cite{Gas17}.} In the one-dimensional setting and for smooth, strictly convex, $x$-independent Hamiltonians a related result has been obtained independently and by different methods in  Hoel, Karlsen, Risebro, and Storr{\o}sten~\cite{HKRS}. 
\smallskip

We also establish that the reduced path 
of a Brownian motion
has almost surely finite variation, a fact which implies that $\rho_{H}(\xi,T)$ is almost surely finite.
\smallskip

Given $\xi \in C_0([0,T])$, \blue{$\arg\min_{[a,b]}$ ($\arg\max_{[a,b]}$ resp.) denotes the set of minimal (maximal resp.) points of $\xi$ on the interval $[a,b]\subseteq[0,T]$} and the sequence $(\tau_{i})_{i\in\mathbb{Z}}$ of successive extrema of $\xi$  is defined by 
\begin{equation}
\tau_{0}:=\sup\left\{ t\in[0,T],\;\;\xi(t)=\max_{0\leq s\leq T}\xi(s)\mbox{ or }\xi(t)=\min_{0\leq s\leq T}\xi(s)\right\} ,\label{eq:extrema1}
\end{equation}
and, for all  $ i\geq0$, 
\begin{equation}
\tau_{i+1}=\left\{ \begin{array}{ll}
\blue{\sup}\arg\max_{[\tau_{i},T]}\xi & \mbox{ if }\ \ \xi(\tau_{i})<0,\\[1.5mm]
\blue{\sup}\arg\min_{[\tau_{i},T]}\xi & \mbox{ if } \ \ \xi(\tau_{i})>0,
\end{array}\right.\label{eq:extrema2}
\end{equation}
and, for all  $ i\leq0$,   
\begin{equation}
\tau_{i-1}=\left\{ \begin{array}{ll}
\blue{\inf}\arg\max_{[0,\tau_{i}]}\xi & \mbox{ if } \ \  \xi(\tau_{i})<0,\\[1.5mm]
\blue{\inf}\arg\min_{[0,\tau_{i}]}\xi & \mbox{ if }\ \  \xi(\tau_{i})>0.
\end{array}\right.\label{eq:extrema3}
\end{equation}
\smallskip 
The skeleton (resp. full skeleton) or  reduced (resp. fully reduced) path ${R}_{0,T}(\xi)$ (resp. $\tilde{R}_{0,T}(\xi)$) of $\xi \in C_0([0,T])$ is defined as follows (see Figure \ref{fig:reduced_path}). 
\begin{defn}
Let $\xi \in C_0([0,T])$. 

(i)~The reduced path $R_{0,T}(\xi)$ is a piecewise linear function which agrees  with $\xi$ on $(\tau_{i})_{ i\in\mathbb{Z}}$.

(ii)~The fully reduced path  $\tilde{R}_{0,T}(\xi)$ is a piecewise linear function  agreeing with 
 $\xi$ on $(\tau_{-i})_{i\in\mathbb{N}} \cup \{T\}$.
 
(iii)~ A path $\xi \in C_0([0,T])$ is reduced (resp. fully reduced) if $\xi=R_{0,T}(\xi)$ (resp. $\xi=\tilde{R}_{0,T}(\xi)$). 
\end{defn}
\begin{figure}
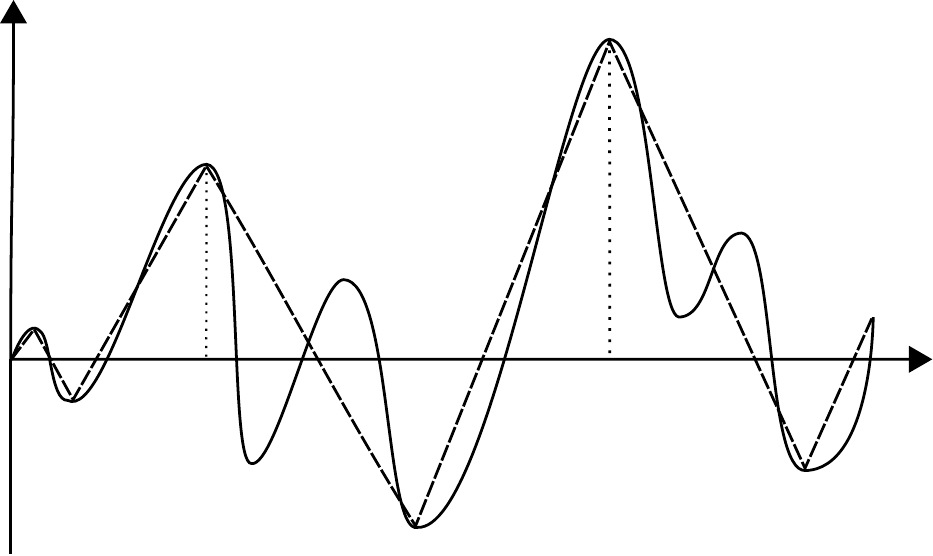
\caption{The (fully) reduced path}
\label{fig:reduced_path}
\end{figure}
\blue{We emphasize that the reduced and the fully reduced paths coincide prior to the global extremum $\tau_0$. While the reduced path captures the max-min fluctuations also after $\tau_0$, the fully reduced path is affine linear on $[\tau_0,T]$ and, in this sense, is more "reduced" .}

Let $u^{\xi}$ be the solution to \eqref{eq:HJ-intro}.  We show in Theorem \ref{thm:reduction} in the next section  that 
\begin{equation}
u^{\xi}(\cdot, T)=u^{R_{0,T}(\xi)}(\cdot, T),\label{eq:intro-reduced}
\end{equation}
which immediately implies the following result. 
\begin{thm}\label{takis3}
Assume \eqref{takis1}. 
Then, 
for all $\xi \in C_0([0,T])$,  
\begin{equation} \label{eq:main-speed}
\rho_{H}(\xi,T)\leq L \ \|R_{0,T}(\xi)\|_{TV([0,T])}.
\end{equation}
\end{thm}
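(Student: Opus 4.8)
The plan is to obtain Theorem~\ref{takis3} as an essentially immediate consequence of the reduction identity \eqref{eq:intro-reduced} (Theorem~\ref{thm:reduction}) together with the classical bound \eqref{eq:TrivialBound}; the substantive content lies entirely in Theorem~\ref{thm:reduction}, and what follows is only a bookkeeping argument about the definition of $\rho_H$.

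First I would dispense with a degenerate case: if $\|R_{0,T}(\xi)\|_{TV([0,T])}=+\infty$ the inequality \eqref{eq:main-speed} is vacuous, so I may assume this quantity is finite. Then $R_{0,T}(\xi)$, being piecewise linear with breakpoints at the successive extrema $(\tau_i)_{i\in\Z}$ and with summable increments, is a path of bounded variation on $[0,T]$ with $\|R_{0,T}(\xi)\|_{TV([0,T])}=\sum_i|\xi(\tau_{i+1})-\xi(\tau_i)|$; here one should verify that the tails of $(\tau_i)$ — in particular the negative-index tail accumulating near the left endpoint — yield a well-defined continuous path, which follows from the summability of the increments and the continuity of $\xi$ (and a harmless constant shift puts it back in $C_0([0,T])$, since \eqref{eq:HJ-intro} only sees increments of the driving signal).

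Next I would transfer competitors from $\xi$ to $R_{0,T}(\xi)$. Let $R\ge0$ be admissible in the supremum \eqref{eq:defR-2} defining $\rho_H(\xi,T)$: there are solutions $u^1,u^2$ of \eqref{eq:HJ-intro} driven by $\xi$ and a point $x\in\R^d$ with $u^1(\cdot,0)=u^2(\cdot,0)$ on $B_R(x)$ and $u^1(x,T)\neq u^2(x,T)$. Let $v^i$ be the solution of \eqref{eq:HJ-intro} driven by $R_{0,T}(\xi)$ with the same initial datum $u^i(\cdot,0)$, which is well defined by well-posedness of pathwise viscosity solutions (Lions--Souganidis). Since \eqref{eq:intro-reduced} holds for every admissible initial datum, $v^i(\cdot,T)=u^i(\cdot,T)$ for $i=1,2$, so $v^1(\cdot,0)=v^2(\cdot,0)$ on $B_R(x)$ while $v^1(x,T)\neq v^2(x,T)$; hence $R$ is admissible in the supremum defining $\rho_H(R_{0,T}(\xi),T)$, and taking the supremum over such $R$ gives $\rho_H(\xi,T)\le\rho_H(R_{0,T}(\xi),T)$. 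Finally, since $R_{0,T}(\xi)\in BV([0,T])$, the classical estimate \eqref{eq:TrivialBound} applied to the driving path $R_{0,T}(\xi)$ gives $\rho_H(R_{0,T}(\xi),T)\le L\,\|R_{0,T}(\xi)\|_{TV([0,T])}$, and combining the last two inequalities yields \eqref{eq:main-speed}.

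As for the main obstacle: within this derivation there is none worth the name — the only real difficulty is hidden in the cited Theorem~\ref{thm:reduction}, where convexity of $H$ enters through the optimal-control (Lax--Oleinik type) representation of the viscosity solution, the point being that the value of the associated control problem feels only the monotone arcs of $\xi$ joining consecutive extrema, so that replacing $\xi$ by $R_{0,T}(\xi)$ does not change the terminal value. One should also keep an eye on the precise admissibility class of initial data used in the definition of $\rho_H$, to ensure that the replacement solutions $v^i$ belong to the same class.
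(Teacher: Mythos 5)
Your proof is correct and matches the paper's approach: the paper derives Theorem~\ref{takis3} as an immediate corollary of the reduction identity $u^{\xi}(\cdot,T)=u^{R_{0,T}(\xi)}(\cdot,T)$ of Theorem~\ref{thm:reduction} combined with the classical BV bound \eqref{eq:TrivialBound}, which is exactly the bookkeeping you spell out. The peripheral points you flag (well-definedness of $R_{0,T}(\xi)$ as a continuous path when the $\tau_i$ accumulate, and the infinite-TV case) are genuine but minor technicalities that the paper passes over silently.
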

The second main result of the paper, which  is a probabilistic one and of independent interest, concerns the total variation of the reduced path of a Brownian motion. To state it, we introduce the random variable $\theta: [0,\infty) \to [0,\infty)$ given by 
\begin{equation}\label{takis10005}
\theta(a):=\inf \{ t\geq0:\;\;\max_{[0,t]}B-\min_{[0,t]}B=a\}.
\end{equation}
We prove  that the length of the reduced path is a random variable with almost Gaussian tails. We also show that if,  instead of fixing the time horizon $T$,  we fix the range, that is the maximum minus the minimum of $B$, then the length has Poissonian tails

\begin{thm}\label{takis4}
\label{thm:BM} Let $B$ be a Brownian motion and fix $T>0$. Then, for each $\gamma \in (0,2)$, there exists $C=C(\gamma, T)>0$ such that, for any $x\geq2$, 
\begin{equation}
\P\left(\left\Vert R_{0,T}(B)\right\Vert _{TV([0,T])}\geq x\right)\leq C \exp\left(-Cx^{\gamma}\right),\label{eq:RGaussian}
\end{equation}
and
\begin{equation}\label{eq:RPoisson}
\displaystyle{\underset{x\to\infty} \lim \frac{\ln \P\left(\left\Vert R_{0,\theta(1)}(B)\right\Vert _{TV([0,\theta(1)])}\geq x\right)}{x\ln(x) }= -1.} 
\end{equation}
\end{thm}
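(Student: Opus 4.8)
The plan is to analyze the reduced path $R_{0,T}(B)$ through the sequence of successive extrema $(\tau_i)_{i \le 0}$ defined in \eqref{eq:extrema3}, counting backwards from the global extremum $\tau_0$. The key structural observation is that the increments $|\xi(\tau_{i-1}) - \xi(\tau_i)|$ form a geometrically (or at least rapidly) decaying sequence of ``overshoots'': at each step we look back in time for the extreme point of the opposite type, and because $\tau_i$ is itself the running extremum of that type up to time $\tau_i$, the new extremum $\xi(\tau_{i-1})$ can only exceed $\xi(\tau_i)$ by a limited amount before the walk would have had to realize this larger oscillation earlier. Concretely, $\|R_{0,T}(B)\|_{TV} = \sum_{i \le 0} |B(\tau_{i-1}) - B(\tau_i)|$ (with a correction term for the affine piece on $[\tau_0,T]$), and the first step is to encode this sum in terms of a sequence of stopping times built from $\theta(\cdot)$: roughly, one follows $B$ backwards from $\tau_0$, and each time one increases the running max-minus-min by a unit the path adds a bounded quantity to the reduced path's total variation. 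Using the strong Markov property at the $\tau_i$ and the scaling of Brownian motion, the successive ``ranges'' $a_i := |B(\tau_{i-1}) - B(\tau_i)|$ can be compared to an i.i.d.-like sequence whose tails I would control via the explicit distribution of $\theta(1)$ (which has exponential moments of every order below a threshold, and in particular $\P(\theta(1) \le t) \to 0$ rapidly as $t \to 0$, while $\P(\theta(1) \ge t)$ is exponentially small).

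For \eqref{eq:RGaussian}, I would argue as follows. Fix $T$. On the event $\{\|R_{0,T}(B)\|_{TV} \ge x\}$, either the number $N$ of extrema $\tau_i$ used is large, or one of the individual oscillations $a_i$ is large. If $N$ is large, then $B$ has made many successive reversals of decreasing-but-still-substantial size inside $[0,T]$; each reversal consumes a definite chunk of time in a way governed by $\theta$, so a large $N$ forces the sum of many independent copies of (roughly) $\theta(1)$-type waiting times to be less than $T$, which has probability decaying super-exponentially in $N$. This is where the ``almost Gaussian'' — i.e.\ $\exp(-Cx^\gamma)$ for any $\gamma < 2$ but not quite $\gamma = 2$ — arises: the waiting times $\theta(2^{-k})$ scale like $2^{-2k}$, so packing $N$ reversals into time $T$ behaves like requiring $\sum_{k} 2^{-2k}(\text{unit waiting time})_k \lesssim T$, and a large-deviations estimate for this sum gives a bound of the form $\exp(-cN^2 \cdot(\text{something}))$ up to logarithmic corrections — the loss of the endpoint $\gamma=2$ coming from these corrections and from the fact that the first few (largest) oscillations are not uniformly bounded below. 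Alternatively, if some single $a_i$ is of order $x$, then $B$ attained range of order $x$ on $[0,T]$, which by the reflection principle has probability $\le C\exp(-cx^2/T)$. Combining, one gets \eqref{eq:RGaussian} for every $\gamma<2$.

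For \eqref{eq:RPoisson}, the setup changes because we fix the range to be exactly $1$ (the time horizon $\theta(1)$ being now random). After the global extremum is reached with range $1$, the reduced path backwards from $\tau_0$ adds $a_0 \le 1$, then $a_{-1}$, and so on; the crucial point is that now there is no time constraint forcing the $a_i$ to decay, only the self-referential constraint that each new opposite-type extremum cannot exceed the previous same-type level. I expect that, conditionally, the reversal sizes behave like a sequence $a_i \sim U_i \cdot a_{i-1}$ or more precisely that $-\log a_i$ performs a random walk with positive drift, so that $\|R_{0,\theta(1)}(B)\|_{TV} = \sum a_i$ is a sum of a random, almost-geometric number of terms. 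A sum $\sum_{i=1}^{K} a_i$ where $a_i$ decays geometrically and $K$ is itself essentially the number of steps exceeds $x$ only if $K \approx x$ terms of size $\gtrsim 1$ occur, and the probability of $K$ consecutive reversals each failing to shrink is like $\prod (\text{const} < 1)$ — but the precise $x\log x$ rate suggests that the $i$-th reversal surviving at size of order $1$ costs a probability of order $1/i$ (as in records or cycle structure of random permutations), so $\P(K \ge x) \approx \prod_{i \le x} (1 - c/i) \approx x^{-c}$ is too weak; rather one needs $\P(\sum a_i \ge x) \approx \P(\text{at least } x \text{ reversals, each of size } \asymp 1)$ with the $i$-th such event having probability $\asymp 1/i$ \emph{given the range budget}, giving $\prod_{i\le x}(1/i) = 1/x!$, hence $\log \P \approx -x\log x$. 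Making this heuristic rigorous — identifying the correct conditional law of the successive reversal sizes given the realized range, and showing both the upper and matching lower bound in \eqref{eq:RPoisson} — is the main obstacle, since it requires a precise description of the backward extremal structure of Brownian motion rather than just a tail estimate; I would expect to use the arcsine-type laws and the independence coming from excursion theory / the strong Markov property at the $\tau_i$, together with a careful induction on the remaining range.
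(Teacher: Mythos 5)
Your intuition for \eqref{eq:RPoisson} is on the right track, but the proposal stops short of a proof and misses the key structural tools. The paper's central device is a time-change you never introduce: set $S(r) := B(\theta(r))$ and $L(r) := \|S\|_{TV([0,r])}$; Lemma \ref{lem:RL} then gives the exact identity $\|R_{0,T}(B)\|_{TV([0,\tau_0])} = L(R(T))$. By results of Imhof and Vallois, the jump times of $S$ form a Poisson point process on $(0,\infty)$ with intensity $ds/s$, so the ratios of successive jump times are i.i.d.\ uniform on $[0,1]$ — this is the precise form of your heuristic ``$a_i \sim U_i a_{i-1}$'' (it actually holds for the successive \emph{ranges} $R(\tau_i)$, which equal the skeleton increments). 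Consequently $L(1)\overset{(d)}{=}1+\sum_{n\ge 0}U_0\cdots U_n$ is a perpetuity, and the sharp constant $-1$ in \eqref{eq:RPoisson} then comes from Goldie--Gr\"ubel's tail asymptotic for such sums; your ``$1/x!$'' heuristic has the right order of magnitude but does not by itself give the exact limit, and making the backward-extremal description rigorous is exactly what the Imhof/Vallois excursion-theoretic input supplies.

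For \eqref{eq:RGaussian} your strategy is genuinely different from the paper's and has a gap. The dichotomy ``either the number $N$ of extrema is large or some $a_i$ is large'' is not usable as stated because the skeleton has \emph{infinitely many} reversals a.s.\ (the increments $a_i \to 0$ as $i\to -\infty$); you must first fix a scale and count reversals above it, and then the two regimes interact, since a large total variation is typically produced by many moderate oscillations rather than a few large or very many tiny ones. Moreover, your waiting-time heuristic ignores that the waiting time for a reversal of size $a$ scales like $a^2$, so packing arbitrarily many small reversals into $[0,T]$ is not unlikely at all — that is precisely why $N=\infty$. The paper instead conditions on the realized range: Lemma \ref{lem:LR1} shows $\P(R(1)\ge r \mid L(r)-r\ge x)\le C e^{-Cx^2}$, by decomposing $\theta(r)=\sum_n \sigma_n(r)^2\tilde\zeta_n$ into i.i.d.\ Bessel hitting times, applying a Jensen lower bound to relate $\sum\alpha_n^2\tilde\zeta_n$ to $\sum\alpha_n^{1/2}\tilde\zeta_n^{-1/2}$, and then a Chernoff bound; Lemma \ref{lem:LR2} bounds the increments of $L$ using the Poisson jump structure. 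These are combined over a fine partition of the range with $\sim e^{x^\alpha}$ cells, which is where the $\gamma<2$ loss comes from. Without the time-change $S$ and its explicit excursion/jump structure, none of these estimates is accessible from your proposed dichotomy.
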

\smallskip
\blue{A related result, proving that the expectation of the total variation of the so-called piecewise linear oscillating running max/min function of Brownian motion is finite, has been obtained independently by Hoel, Karlsen, Risebro, and Storr{\o}sten in \cite{HKRS2}.}

We also study the sharpness of the upper bound. For simplicity we only treat the case $H(p)=|p|$. 
\begin{thm}\label{taki4}
\label{thm:opt} Let $H(p)=|p|$ on $\R^{d}$  with $d\ge1$. 
Then, for all $T>0$ and  $\xi \in C_0([0,T])$, 
\begin{equation}
\rho_{H}(\xi, T)\geq\|\tilde{R}_{0,T}(\xi)\|_{TV([0,T])}.
\end{equation}
When $d=1$, then  
\[
\rho_{H}(\xi, T)=\|\tilde{R}_{0,T}(\xi)\|_{TV([0,T])}.
\]
\end{thm}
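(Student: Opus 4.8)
The plan is to use the optimal control (Hopf–Lax type) representation of pathwise viscosity solutions for $H(p)=|p|$, which is available because $H$ is convex and positively homogeneous of degree one. For a smooth (or $BV$) path $\eta$, the solution of \eqref{takis10} with $H(p)=|p|$ is the Hopf–Lax formula $u^\eta(x,T)=\inf_{|y-x|\le \ell(\eta,T)} u_0(y)$, where the ``horizon'' $\ell(\eta,T)$ is the relevant accumulated length of $\eta$; the key point is to identify $\ell$ precisely. By Theorem \ref{thm:reduction} (equation \eqref{eq:intro-reduced}) we may and do replace $\xi$ by its reduced path, and then iterate: because the equation only ``sees'' the running max minus running min (cf.\ \eqref{eq:fsp-2}), moving from the reduced path to the fully reduced path $\tilde R_{0,T}(\xi)$ should \emph{not} change $u^\xi(\cdot,T)$ either. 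Concretely, after the global extremum $\tau_0$ the path oscillates strictly inside the band determined by $\max_{[0,\tau_0]}\xi$ and $\min_{[0,\tau_0]}\xi$, and for $H(p)=|p|$ the monotone Hopf–Lax semigroup (distance-to-level-set dynamics) is insensitive to oscillations that do not exceed a previously attained amplitude; this is exactly the algebraic identity $\ell(R_{0,T}(\xi),T)=\ell(\tilde R_{0,T}(\xi),T)=\|\tilde R_{0,T}(\xi)\|_{TV([0,T])}$ that needs to be checked by an explicit induction over the successive extrema $\tau_i$, $i\le 0$, together with the final affine segment on $[\tau_0,T]$.

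Granting $u^\xi(\cdot,T)=u^{\tilde R_{0,T}(\xi)}(\cdot,T)$ and the Hopf–Lax formula for the fully reduced (hence $BV$) path, the lower bound $\rho_H(\xi,T)\ge \|\tilde R_{0,T}(\xi)\|_{TV([0,T])}$ follows by exhibiting a pair of solutions that separate at exactly that radius: take $u_0^1\equiv 0$ and $u_0^2$ a function that vanishes on $B_R(x)$ with $R=\|\tilde R_{0,T}(\xi)\|_{TV}$ but is, say, negative somewhere just outside that ball (a cone $u_0^2(y)=-\max(|y-x|-R,0)$ works). Then $u^1(x,T)=\inf_{|y-x|\le R}0=0$ while $u^2(x,T)=\inf_{|y-x|\le R}u_0^2(y)<0$ as soon as $R$ is the true horizon, so $u^1(x,T)\ne u^2(x,T)$ while the data agree on $B_R(x)$; hence $\rho_H(\xi,T)\ge R$. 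For the matching upper bound when $d=1$, invoke Theorem \ref{takis3}: $\rho_H(\xi,T)\le L\,\|R_{0,T}(\xi)\|_{TV}$ with $L=1$, and combine with the identity $\|R_{0,T}(\xi)\|_{TV}=\|\tilde R_{0,T}(\xi)\|_{TV}$ established above, giving $\rho_H(\xi,T)=\|\tilde R_{0,T}(\xi)\|_{TV([0,T])}$. (In $d\ge 2$ the upper bound genuinely fails, since the geometry of level sets allows information to travel further; consistently, Theorem \ref{takis3} is stated only as an inequality and only the lower bound is claimed for $d\ge 2$.)

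The main obstacle, and the step deserving the most care, is the reduction identity $u^\xi(\cdot,T)=u^{\tilde R_{0,T}(\xi)}(\cdot,T)$, i.e.\ that one may discard not only the microscopic oscillations absorbed into $R_{0,T}(\xi)$ (which is Theorem \ref{thm:reduction}) but also the entire post-$\tau_0$ fluctuation structure, replacing $[\tau_0,T]$ by a single affine piece. This is special to positively $1$-homogeneous $H$: for such $H$ the solution operator is the metric/level-set evolution $u(\cdot,t)=$ ``$u_0$ propagated outward (or inward) by the signed amount dictated by $\eta$'', and because the propagation is monotone in the running amplitude, once $\xi$ has attained its global max and min the sign alternations afterwards can only ``undo and redo'' displacements the flow has already performed, netting the same final horizon as the straight segment connecting $\xi(\tau_0)$ to $\xi(T)$. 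Making this rigorous requires either (a) a direct Hopf–Lax computation: show $\ell(\eta,T)=\int_0^T |\dot\eta|\,\mathbf 1_{\{\text{$\eta(t)$ is a running extremum}\}}\,dt$ for $BV$ $\eta$ and then evaluate this functional on $\tilde R_{0,T}(\xi)$, or (b) an approximation argument passing from smooth $\eta_n\to\xi$ to the limit using stability of pathwise viscosity solutions from \cite{LS98a}. I would pursue (a), since the combinatorics of the $\tau_i$'s makes the identity essentially bookkeeping once the correct formula for $\ell$ is in hand; the delicate point within (a) is handling the boundary cases where $\arg\max$ or $\arg\min$ is attained at $0$ or at $T$, which is precisely why the definitions \eqref{eq:extrema1}–\eqref{eq:extrema3} pin down $\sup$/$\inf$ of the argmin/argmax sets, and why $\{T\}$ is explicitly adjoined in the definition of $\tilde R_{0,T}(\xi)$.
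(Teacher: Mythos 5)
Your proposed argument has several genuine gaps, and the central formula it rests on is false.

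The Hopf--Lax representation you invoke, $u^\eta(x,T)=\inf_{|y-x|\le \ell(\eta,T)}u_0(y)$, does \emph{not} hold when $\dot\eta$ changes sign. For $H(p)=|p|$ the correct building blocks are $S_{|\cdot|}(\delta)u(x)=\sup_{|x-y|\le\delta}u(y)$ and $S_{|\cdot|}(-\delta)u(x)=\inf_{|x-y|\le\delta}u(y)$, and a path with alternating increments yields an \emph{alternating composition} of dilations and erosions (a morphological opening/closing), which is not expressible as a single $\inf$ over a ball. This kills the rest of the plan: your lower-bound test pair $u_0^1\equiv 0$ and $u_0^2(y)=-\max(|y-x|-R,0)$ is a cone, and for cone data these sup/inf operators merely translate the cone, so $S^\xi(0,T)u_0^2(x)=-\max(-R-\xi_{0,T},0)=0=S^\xi(0,T)u_0^1(x)$ whenever $R+\xi_{0,T}\ge 0$. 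The two solutions agree at $x$, so this pair produces no lower bound at all. The actual mechanism for the lower bound, which your proposal does not capture, is \emph{topological irreversibility} of level sets: once a connected component of $\{u\ge 0\}$ is erased by a downswing of $\xi$ it cannot reappear, and once a gap between components is filled by an upswing it cannot reopen. The paper exploits this by choosing an initial level set consisting of a carefully spaced sequence of intervals $I_k$ (sizes and gaps tuned to the successive oscillations $\xi_{0,\tau_k}$ via the constraint \eqref{eq:xConstraint}), and a second initial set that adds one unbounded ray; one verifies inductively along $\tau_N<\dots<\tau_0$ that the components of $P^1$ are destroyed while those of $P^2$ merge and survive, which is what forces $\rho_H(\xi,T)\ge\|\tilde R_{0,T}(\xi)\|_{TV}$.

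Separately, the upper-bound step for $d=1$ relies on an identity that is simply not true. You want to conclude from Theorem~\ref{takis3} by using $\|R_{0,T}(\xi)\|_{TV}=\|\tilde R_{0,T}(\xi)\|_{TV}$, but the reduced path $R_{0,T}(\xi)$ continues to oscillate on $[\tau_0,T]$ while $\tilde R_{0,T}(\xi)$ is a single affine segment there, so in general $\|R_{0,T}(\xi)\|_{TV}>\|\tilde R_{0,T}(\xi)\|_{TV}$ strictly. What the paper actually uses is the stronger \emph{solution} identity $S^\xi(0,T)u_0=S^{\tilde R_{0,T}(\xi)}(0,T)u_0$, valid only for $d=1$ and $x$-independent convex $H$ (Proposition~\ref{cor:upper_bound_d1}, quoting Hoel--Karlsen--Risebro--Storr{\o}sten); this is not a corollary of Theorem~\ref{takis3}, and the paper explicitly shows it fails for $d\ge 2$. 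Your remark that one should be able to ``discard the post-$\tau_0$ fluctuations'' is pointing at the right phenomenon, but it needs that separate $d=1$ result, not a TV-norm identity; and your heuristic that the flow is ``insensitive to oscillations not exceeding a previously attained amplitude'' is true only in this specific $d=1$ sense and is what makes the claim delicate rather than bookkeeping.
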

The paper is organized as follows. In section~\ref{sec:Representation-with-reduced} we  improve upon results of \cite{LS98a,LS98b,LS-book} about representation formulae,  the control of the oscillations in time and the domain of dependence of the solutions of \eqref{takis10} with piecewise linear paths. We then extend these estimates by density to general continuous paths. In order to avoid stating many assumptions on $H$, we introduce a new condition about solutions of \eqref{takis10} which is  satisfied by the general class of Hamiltonians for which there is a well-posed theory of pathwise solutions as developed in \cite{LS18+}. All these lead to the proof of
Theorem~\ref{takis3}. In section 3 we discuss  the example which shows that the upper bound obtained in Theorem~\ref{takis3} is sharp. Section 4 is devoted to the study of ``random''  properties of the reduced path of the Brownian motion (Theorem~\ref{takis4}). 

\section{Reduction to the skeleton path and domain of dependence}
\label{sec:Representation-with-reduced}

\subsection*{Notation and preliminaries} For all $\xi\in C_0([0,T])$ and $u_{0}\in BUC(\R^{d})$, let $S^{\xi}_{\blue{H}}$ be the flow of solutions of \eqref{eq:HJ-intro}\blue{, that is, $S^{\xi}_H(s,t)u_0$ is the viscosity solution to \eqref{eq:HJ-intro} with initial condition $u_0$ at time $s$, i.e.\ $S^{\xi}_H(s,s)u_0=u_0$, and solving \eqref{eq:HJ-intro} driven by the signal $\xi$, in the viscosity sense, on $[s,\infty)$}. 
A simple rescaling shows that without loss of generality we may assume that $$L=1.$$ 
In view of \eqref{takis1} and the normalization of the Lipschitz constant we have 
\[
H(p,x)=\sup_{v\in B_1(0)} \left\{ p\cdot v- L(v,x) \right\},
\]
where $L(v,x)=\sup_{p \in \R^d} \left\{ p\cdot v- H(p,x) \right\}$.
\smallskip

We assume that $H$ satisfies all assumptions needed (see~\cite{CL92}) for 
$u_t= H(Du,x)\dot{\xi}$ to be well posed when $\xi$ is smooth and we denote  by  $S_{\pm H}(t):\text{BUC}(\R^d) \to \text{BUC}(\R^d)$ the solution operator when $\dot \xi\equiv \pm1$, that is, for $u_0\in  \text{BUC}(\R^d)$,
$S_{\pm H}(t)u_0$ is the unique solution of
\begin{equation}\label{takis100}
u_t=\pm H(Du, x) \ \text{in} \ \R^d \times (0,T] \quad u(\cdot,0)=u_0 \ \text{in} \ \R^d.\end{equation}
Moreover, for $t \leq 0$,   $S_H(t) := S_{-H}(-t)$. \blue{Hence, $S_{\pm H}(t)u_0 = S^{\xi_\pm}_H(0,t)u_0$, where $\xi_\pm(t):=\pm t$.}
\smallskip

Given $S,S' : \text{BUC}(\R^d)\to \text{BUC}(\R^d) $,  we say that $S \leq S'$ if  $Su \leq S'u$ for all $u$ in $\text{BUC}(\R^d)$.

\smallskip

In the sequel we write  $\xi_{s,t}:=\xi_{t}-\xi_{s}$ for  the increments of $\xi$ over the interval $[s,t]$. 
\smallskip

Let $\xi \in C([0,T])$ be a piecewise linear path, that is, for  a  partition $0=t_0 \leq \ldots\leq t_N =T$ of $[0,T]$, and $a_{i},b_{i}\in\R$, $i=1,\dots N$,  
$$\xi(t)=\sum_{i=0}^{N-1}1_{[t_{i},t_{i+1})}(a_{i}(t-t_{i})+b_{i}).$$

We then set
\begin{align*}
S_{H}^{\xi}(0,T):= & S_{H}^{\xi}(t_{N-1},t_{N})\circ\dots\circ S_{H}^{\xi}(t_{0},t_{1})
\end{align*}
and note that 
$$S_{H}^{\xi}(0,T)=S_{H}(\xi_{t_{N-1},t_{N}})\circ\dots\circ S_{H}(\xi_{t_{0},t_{1}}).$$

%
%
%

We show later that $\xi\mapsto S_{H}^{\xi}(0,\cdot)$ is uniformly continuous in sup-norm, which allows to extend $S_{H}^{\xi}(0,T)$ to all continuous $\xi$. 

\subsection*{Monotonicity properties}

The control representation of the solution $u$ of \eqref{takis100} (see, for example, Lions~\cite{LionsBook}) with $\xi_{t}\equiv t$ and $u_0 \in \text{BUC}(\R^d)$ is  
\[
u(x,t)=S_{H}(t)u_{0}(x)=\sup_{q\in\mcA}\left\{ u_{0}(X(t))-\int_{0}^{t}L(q(s), X(s))ds:\,X(0)=x,\,\dot{X}(s)=q(s) \ \text{for $s\in [0,t]$} \right\} ,
\]
and 
\[
S_{-H}(t)u_{0}(y)=\inf_{r\in\mcA}\left\{ u_{0}(Y(t))+\int_{0}^{t}L(r(s),Y(s))ds:\,Y(0)=y,\,\dot{Y}(s)=-r(s) \ \text{for $s\in [0,t]$}\right\},
\]
where $\mcA=L^{\infty}(\R_{+}; \overline B_1(0))$ is the set of controls. 
\smallskip

The next property is a refinement of an observation in \cite{LS-book}, \blue{see also Barron, Cannarsa, Jensen, Sinestrari \cite{BCJ99}}. 
\begin{lem}
\label{lem:reduction1} Fix  $t>0$ and  $u_{0}\in BUC(\R^{d})$. Then
\[
S_{H}(t)\circ S_{H}(-t)u_{0}\le u_{0}\le S_{H}(-t)\circ S_{H}(t)u_{0}.
\]
\end{lem}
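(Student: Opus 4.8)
The plan is to prove the two inequalities separately, in both cases by exploiting the time‑reversal symmetry built into the control representations of $S_H(t)$ and of $S_H(-t)=S_{-H}(t)$ recalled above, together with two structural facts: the Lagrangian $L(v,x)$ carries no explicit time dependence, and the control set $\mcA=L^\infty(\R_+;\overline B_1(0))$ is invariant under the change of variable $s\mapsto t-s$. First I would record that $w:=S_H(t)u_0$ and $v:=S_{-H}(t)u_0$ again lie in $BUC(\R^d)$, which is part of the standing well‑posedness assumptions on $H$, so that the iterated operators are well defined and finite‑valued.

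For the right‑hand inequality $u_0\le S_{-H}(t)\circ S_H(t)u_0$, fix $x\in\R^d$ and an arbitrary control $r\in\mcA$ with trajectory $Y$ solving $\dot Y=-r$, $Y(0)=x$. The key step is to feed the time‑reversed curve $X(s):=Y(t-s)$, $s\in[0,t]$, into the supremum defining $w(Y(t))$: one checks that $X(0)=Y(t)$, $X(t)=x$, and $\dot X(s)=r(t-s)=:q(s)\in\overline B_1(0)$, so $X$ is an admissible trajectory for the problem defining $S_H(t)u_0$ started from $Y(t)$, and the substitution $\sigma=t-s$ — which leaves $\int_0^t L\,d\sigma$ unchanged precisely because $L$ is autonomous — gives
\[
w(Y(t))\ \ge\ u_0(X(t))-\int_0^t L(q(s),X(s))\,ds\ =\ u_0(x)-\int_0^t L(r(\sigma),Y(\sigma))\,d\sigma,
\]
that is, $w(Y(t))+\int_0^t L(r,Y)\ge u_0(x)$. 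Since $r$ was arbitrary, taking the infimum over $r$ yields $S_{-H}(t)w(x)\ge u_0(x)$, which is the claim. (On controls of infinite running cost the inequality is automatic, so the computation is only needed when $\int_0^t L(r,Y)<\infty$.)

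The left‑hand inequality $S_H(t)\circ S_{-H}(t)u_0\le u_0$ is obtained by the same device with the roles of the supremum and the infimum exchanged: with $v:=S_{-H}(t)u_0$, for any $q\in\mcA$ and trajectory $X$ with $\dot X=q$, $X(0)=x$, the reversed curve $Y(s):=X(t-s)$ is admissible for the infimum defining $v(X(t))$, giving $v(X(t))\le u_0(x)+\int_0^t L(q,X)$, hence $v(X(t))-\int_0^t L(q,X)\le u_0(x)$ for every $q$; taking the supremum over $q$ gives $S_H(t)v(x)\le u_0(x)$.

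I do not expect a genuine obstacle in this argument; the only points needing care are bookkeeping. One must get the sign conventions right when matching $\dot X=q$ with $\dot Y=-r$ under reversal, verify invariance of the running cost under $\sigma=t-s$ (which is exactly the place where autonomy of $L$ is used, so the statement would generally fail for honestly time‑dependent Lagrangians), and note that $L(v,x)$ may take the value $+\infty$ for some $|v|\le1$, in which case the relevant inequality holds trivially. It is also worth flagging that this lemma will later be applied iteratively along piecewise linear paths, so only the pointwise comparison, not any quantitative estimate, is required.
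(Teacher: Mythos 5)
Your proof is correct and uses essentially the same idea as the paper: exploit the control representations of $S_H(t)$ and $S_H(-t)=S_{-H}(t)$ and test the inner optimization with the time-reversed trajectory of the outer one, so that the running costs cancel. The only cosmetic difference is that you compute the inner operator directly on the endpoint $Y(t)$ (resp.\ $X(t)$) and then optimize the outer variable, whereas the paper writes the composition as a single $\sup$-$\inf$ and substitutes $r(s)=q(t-s)$, $Y(s)=X(t-s)$; both are the same verification, and the paper likewise proves only one inequality and notes the other is symmetric.
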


\begin{proof}
Since the arguments are identical we only show the proof of the inequality on the left. 
\smallskip

We have
\begin{align*}
S_{H}(t)\circ S_{H}(-t)u_{0}(x)=\sup_{q\in\mcA}\inf_{r\in\mcA}\Big\{ & u_{0}(Y(t))+\int_{0}^{t}L(r(s),Y(s))ds-\int_{0}^{t}L(q(s),X(s))ds:\\
 & Y(0)=X(t),\,\dot{Y}(s)=-r(s),\,X(0)=x,\,\dot{X}(s)=q(s)\ \text{for $s\in [0,t]$}\Big\}.
\end{align*}
Given $q\in\mcA$  choose $r(s)=q(t-s)$ in the infimum above. Since  $Y(s)=X(t-s)$, it follows that 
\begin{align*}
S_{H}(t)\circ S_{H}(-t)u_{0}(x) & \le\sup_{q\in\mcA}\Big\{ u_{0}(X(0))+\int_{0}^{t}L(q(t-s),X(t-s))ds-\int_{0}^{t}L(q(s),X(s))ds:\\
 & \quad\quad\quad X(0)=x,\,\dot{X}(s)=q(s)  \ \text{for $s\in [0,t]$} \Big\}\\
 & =u_{0}(x).
\end{align*}
\end{proof}
The next result is an easy consequence of Lemma~\ref{lem:reduction1} and the definition of $S_{H}^{\xi}$ for piecewise linear paths.
\begin{lem}
\label{lem:bump}Let $\xi_{t}=1_{t\in[0,t_{1}]}(a_{0}t)+1_{t\in[t_{1},T]}(a_{1}(t-t_{1})+a_{0}t_{1})$.  If  $a_{0}\ge0$ and $a_{1}\le0$ (resp. $a_{0}\le0$ and  $a_{1}\ge0$), then 
\[
S_{H}^{\xi}(0,T)\ge S_{H}(\xi_{0,T}) \qquad (resp. \ \ S_{H}^{\xi}(0,T)\le S_{H}(\xi_{0,T}).)
\]
\end{lem}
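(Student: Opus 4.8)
The plan is to reduce the statement to Lemma~\ref{lem:reduction1} by exploiting the semigroup structure of $S_H$ over linear pieces. Consider first the case $a_0\ge 0$ and $a_1\le 0$, so that $\xi$ increases on $[0,t_1]$ and decreases on $[t_1,T]$. By the definition of $S_H^\xi$ for piecewise linear paths, we have $S_H^\xi(0,T)=S_H(\xi_{t_1,T})\circ S_H(\xi_{0,t_1})$, where $\xi_{0,t_1}=a_0 t_1\ge 0$ and $\xi_{t_1,T}=a_1(T-t_1)\le 0$. The total increment is $\xi_{0,T}=\xi_{0,t_1}+\xi_{t_1,T}$, and without loss of generality (after relabelling) I may assume $\xi_{0,T}\ge 0$; the complementary subcase is symmetric. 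Write $m:=-\xi_{t_1,T}\ge 0$ for the size of the downward move. Then $S_H(\xi_{0,t_1})=S_H(\xi_{0,T}+m)$. The key is that $S_H$ restricted to nonnegative times is a genuine semigroup — it is the solution operator of the autonomous equation $u_t=H(Du,x)$ — so $S_H(\xi_{0,T}+m)=S_H(\xi_{0,T})\circ S_H(m)$. Hence
\[
S_H^\xi(0,T)=S_H(-m)\circ S_H(\xi_{0,T})\circ S_H(m)=S_H(-m)\circ S_H(m)\circ S_H(\xi_{0,T}),
\]
where in the last step I used that $S_H(\xi_{0,T})$ and $S_H(m)$ commute (again because both are positive-time elements of the same semigroup, and $S_H(\xi_{0,T})\circ S_H(m)=S_H(\xi_{0,T}+m)=S_H(m)\circ S_H(\xi_{0,T})$).

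Now apply Lemma~\ref{lem:reduction1} with $t=m$: for every $u_0\in BUC(\R^d)$ one has $u_0\le S_H(-m)\circ S_H(m)u_0$. Taking $u_0:=S_H(\xi_{0,T})v$ for an arbitrary $v\in BUC(\R^d)$ gives
\[
S_H(\xi_{0,T})v\le S_H(-m)\circ S_H(m)\circ S_H(\xi_{0,T})v=S_H^\xi(0,T)v,
\]
which is precisely $S_H^\xi(0,T)\ge S_H(\xi_{0,T})$. The case $a_0\le 0$, $a_1\ge 0$ is handled identically: now $\xi$ first decreases then increases, so $S_H^\xi(0,T)=S_H(\xi_{t_1,T})\circ S_H(\xi_{0,t_1})$ with $\xi_{0,t_1}\le 0$; writing $m:=-\xi_{0,t_1}\ge 0$ and assuming (WLOG) $\xi_{0,T}\ge 0$, one factors $S_H(\xi_{t_1,T})=S_H(\xi_{0,T})\circ S_H(m)$ and obtains $S_H^\xi(0,T)=S_H(\xi_{0,T})\circ S_H(m)\circ S_H(-m)$, and the right inequality in Lemma~\ref{lem:reduction1}, $S_H(m)\circ S_H(-m)u_0\le u_0$, combined with monotonicity of $S_H(\xi_{0,T})$ (which is order-preserving as a viscosity solution operator via the comparison principle), yields $S_H^\xi(0,T)\le S_H(\xi_{0,T})$.

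The main point requiring care — and the only substantive step beyond bookkeeping — is the commutation/factorization $S_H(s+t)=S_H(s)\circ S_H(t)$ for $s,t\ge 0$ and the analogous manipulation when one of the arguments is negative; these rest on the fact that the flow $S_H^\xi(0,\cdot)$ for linear paths reduces to iterates of $S_H$ on increments, together with $S_H(t)=S_{-H}(-t)$ for $t\le 0$, so that $S_H(-m)\circ S_H(m)$ is exactly the composition appearing in Lemma~\ref{lem:reduction1}. One should also note that the order-preserving property of $S_H(\tau)$ for $\tau\ge 0$ (needed to push an inequality through an outer application of $S_H(\xi_{0,T})$) is standard for viscosity solutions of $u_t=H(Du,x)$ by comparison. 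I do not expect any genuine obstacle here; the lemma is, as the text says, "an easy consequence" of Lemma~\ref{lem:reduction1}.
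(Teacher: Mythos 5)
Your approach is correct and essentially the same as the paper's: write $S_H^\xi(0,T) = S_H(\xi_{t_1,T}) \circ S_H(\xi_{0,t_1})$, use the semigroup property of $S_H$ on each sign-definite half-line to split off $S_H(\xi_{0,T})$, and apply Lemma~\ref{lem:reduction1} to the resulting up--down pair. One point does need repair: the reduction to $\xi_{0,T} \ge 0$ is not without loss of generality, and the complementary subcase is not literally symmetric. When $a_0 \ge 0$, $a_1 \le 0$ but $\xi_{0,T} \le 0$, the factorization $S_H(\xi_{0,t_1}) = S_H(\xi_{0,T}+m) = S_H(\xi_{0,T}) \circ S_H(m)$ fails, since $\xi_{0,T}\le 0 \le m$ have opposite signs and $S_H$ is a semigroup only on each half-line separately. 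Instead one must factor the other leg, $S_H(\xi_{t_1,T}) = S_H(\xi_{0,T} - \xi_{0,t_1}) = S_H(\xi_{0,T}) \circ S_H(-\xi_{0,t_1})$ (both arguments $\le 0$), giving $S_H^\xi(0,T) = S_H(\xi_{0,T}) \circ S_H(-\xi_{0,t_1}) \circ S_H(\xi_{0,t_1})$; Lemma~\ref{lem:reduction1} makes the inner pair $\ge \mathrm{Id}$, and then, unlike your $\xi_{0,T}\ge 0$ subcase where commuting let you avoid it, one must now invoke order-preservation of the outer operator $S_H(\xi_{0,T})$. This is how the paper itself handles that subcase. (A small slip: what you call the ``right'' inequality of Lemma~\ref{lem:reduction1}, namely $S_H(m)\circ S_H(-m)u_0 \le u_0$, is its left inequality.) Your direct treatment of the second conclusion is in fact cleaner than the paper's, which deduces it via the duality $S_{-H}^{-\xi}=S_H^\xi$ and implicitly relies on the first part holding, with reversed inequalities, for the concave Hamiltonian $-H$.
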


\begin{proof}
Since the claim is immediate if $a_{0}=0$ or $a_{1}=0$, we assume next that 
$a_{0}>0$ and $a_{1}<0$ (see Figure \ref{fig:reduction1}). 

\begin{figure}
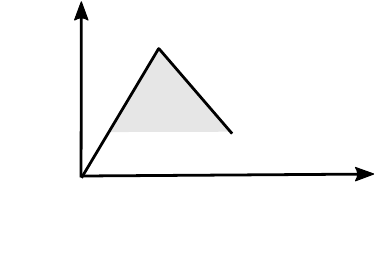
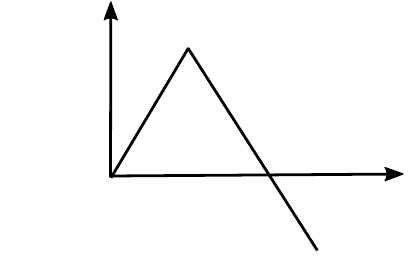
\caption{Reduction}
\label{fig:reduction1}
\end{figure}
\smallskip

If $\xi_{0,T}\le0$, then
\begin{align*}
S_{H}(a_{1}(T-t_{1})) & =S_{-H}(-a_{1}(T-t_{1}))=S_{-H}(-a_{1}(T-t_{1})-a_{0}t_{1})\circ S_{-H}(a_{0}t_{1})\\
 & =S_{-H}(-\xi_{0,T})\circ S_{-H}(a_{0}t_{1})=S_{H}(\xi_{0,T})\circ S_{H}(-a_{0}t_{1}),
\end{align*}
and, hence, in view of  Lemma \ref{lem:reduction1},
\begin{align*}
S_{H}^{\xi}(0,T) & =S_{H}(\xi_{0,T})\circ S_{H}(-a_{0}t_{1})\circ S_{H}(a_{0}t_{1}) 
  \ge S_{H}(\xi_{0,T}).
\end{align*}
If $\xi_{0,T}\ge0$ (see Figure \ref{fig:reduction1}), 
then, again using Lemma \ref{lem:reduction1}, we find 
\begin{align*}
S_{H}^{\xi}(0,T) & =S_{H}(a_{1}(T-t_{1}))\circ S_{H}(-a_{1}(T-t_{1})+a_{0}t_{1}+a_{1}(T-t_{1}))\\
 & =S_{H}(a_{1}(T-t_{1}))\circ S_{H}(-a_{1}(T-t_{1}))\circ S_{H}(a_{0}t_{1}+a_{1}(T-t_{1}))
  \le S_{H}(\xi_{0,T}).
\end{align*}
For the second inequality we note that $S_{-H}^{-\xi}(0,T)=S_{H}^{\xi}(0,T)$, $S_{-H}(-t)=S_{H}(t)$. It then follows from the  the first part that 
\[
S_{H}^{\xi}(0,T)=S_{-H}^{-\xi}(0,T)\ge S_{-H}(-\xi_{0,T})=S_{H}(\xi_{0,T}).
\]
\end{proof}
The next observation provides the first indication of the possible reduction encountered when using the max or min of a given path. For the statement, given piecewise linear path $\xi$, we set 
\[ \tau_{max}:=\sup\{t\in[0,T]:\ \xi_{t}=\max_{s\in[0,T]}\xi_{s}\}  \ \text{ and} \  \tau_{min}:=\blue{\inf}\{t\in[0,T]:\ \xi_{t}=\min_{s\in[0,T]}\xi_{s}\}.\]

\begin{lem}
\label{lem:reduction3} Fix  a piecewise linear path $\xi$.
 Then 
\[
S_{H}^{\xi}(\tau_{max},T)\circ S_{H}(\xi_{0,\tau_{max}})\leq S_{H}^{\xi}(0,T)\leq S_{H}(\xi_{\tau_{min},T})\circ S_{H}^{\xi}(0,\tau_{min}).
\]
\end{lem}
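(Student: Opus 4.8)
The plan is to reduce both inequalities to repeated applications of Lemma \ref{lem:bump}, using the basic semigroup identity $S_H^\xi(0,T) = S_H^\xi(\tau,T)\circ S_H^\xi(0,\tau)$ for any intermediate time $\tau$ (in particular $\tau=\tau_{max}$ or $\tau=\tau_{min}$), together with the flow property $S_H^\xi(0,\tau)=S_H(\xi_{t_{k-1},t_k})\circ\cdots\circ S_H(\xi_{t_0,t_1})$ on the linear pieces. Since the two inequalities are dual (replace $\xi$ by $-\xi$ and use $S_{-H}(-t)=S_H(t)$, which swaps $\tau_{max}\leftrightarrow\tau_{min}$ and reverses inequalities), I would prove only the right-hand inequality $S_H^\xi(0,T)\le S_H(\xi_{\tau_{min},T})\circ S_H^\xi(0,\tau_{min})$ and then deduce the left one by this symmetry.

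For the right-hand inequality, by the semigroup property it suffices to show $S_H^\xi(\tau_{min},T)\le S_H(\xi_{\tau_{min},T})$, i.e. that over the interval $[\tau_{min},T]$ the piecewise-linear flow is dominated by the single straight-line flow with the net increment. The key structural fact is that $\tau_{min}$ is the \emph{last} time at which $\xi$ attains its minimum on $[0,T]$, so on $[\tau_{min},T]$ the path stays at or above $\xi(\tau_{min})$; hence the running increment $t\mapsto \xi_t-\xi_{\tau_{min}}$ is nonnegative on $[\tau_{min},T]$. I would then argue by induction on the number $N$ of linear pieces of $\xi$ lying in $[\tau_{min},T]$. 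The inductive step is exactly the content of Lemma \ref{lem:bump}: whenever two consecutive slopes have opposite signs with a downward-then-upward ($a_0\le 0$, $a_1\ge 0$) "valley" shape, one can collapse those two pieces into a single segment while only increasing the flow in the $\le$ direction — more precisely, replacing $S_H(\text{piece }i+1)\circ S_H(\text{piece }i)$ by $S_H(\text{sum of increments})$ preserves the chain of inequalities. One also handles the complementary "peak" shape: there Lemma \ref{lem:bump} gives $\le$ directly ($a_0\ge0$, $a_1\le0$ case), which again lets one merge two pieces. Since the constraint "the running increment from $\tau_{min}$ stays $\ge 0$" forbids the configuration that would force the wrong inequality direction (a genuine descent below the starting level), every adjacent pair can be merged in the favorable direction, and after finitely many merges the whole of $[\tau_{min},T]$ becomes a single segment realizing $S_H(\xi_{\tau_{min},T})$.

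Concretely I would set it up as follows: write the linear pieces of $\xi$ on $[\tau_{min},T]$ as increments $\delta_1,\dots,\delta_N$ (with partial sums $\sigma_k=\delta_1+\cdots+\delta_k\ge 0$ by definition of $\tau_{min}$), so $S_H^\xi(\tau_{min},T)=S_H(\delta_N)\circ\cdots\circ S_H(\delta_1)$; then, using monotonicity of each $S_H$ (order preservation, which follows from the control/sup representation) together with Lemma \ref{lem:bump} applied to the first index where a sign change occurs, show $S_H(\delta_{k+1})\circ S_H(\delta_k)\le S_H(\delta_k+\delta_{k+1})$ in the relevant configuration, and iterate. The analogous bookkeeping with $\tau_{max}$ (the running increment to $\tau_{max}$ stays $\ge 0$ \emph{in reverse}, equivalently $\xi_t\le\xi(\tau_{max})$ on $[0,\tau_{max}]$, which by time-reversal is a "valley" for $-\xi$) gives $S_H^\xi(\tau_{max},T)\circ S_H(\xi_{0,\tau_{max}})\le S_H^\xi(0,T)$ after applying the $-\xi$ substitution.

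The main obstacle I anticipate is making the induction bookkeeping airtight: Lemma \ref{lem:bump} as stated concerns a two-piece path starting from $0$, so to apply it to an internal adjacent pair $\delta_k,\delta_{k+1}$ inside a longer chain I must (i) invoke order-preservation of the outer operators $S_H(\delta_N)\circ\cdots\circ S_H(\delta_{k+2})$ and $S_H(\delta_{k-1})\circ\cdots\circ S_H(\delta_1)$ to "localize" the inequality to the pair, and (ii) verify that after merging, the partial-sum nonnegativity condition is inherited by the shortened chain so the induction continues — and that the sign pattern always contains a mergeable adjacent pair (a local max or a local min interior to the chain) unless the chain is already monotone, in which case $S_H^\xi(\tau_{min},T)=S_H(\xi_{\tau_{min},T})$ trivially by the semigroup identity for $S_H$ along a single ray. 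None of these steps is deep, but the sign conventions (which of $a_0,a_1$ is $\ge0$ at the extremum, and which inequality Lemma \ref{lem:bump} then yields) need to be tracked carefully, and the $\tau_{min}$/$\tau_{max}$ asymmetry in the definition (sup for $\tau_{max}$, inf for $\tau_{min}$) is exactly what guarantees the running increments have the sign needed for the favorable case of Lemma \ref{lem:bump}.
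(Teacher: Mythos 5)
Your overall plan — reduce to a one-sided statement over $[\tau_{\min},T]$ (resp.\ $[0,\tau_{\max}]$) and merge adjacent linear pieces using Lemma~\ref{lem:bump} and monotonicity — is the paper's approach. But the execution contains a sign error in how you read Lemma~\ref{lem:bump}, and this error is precisely what makes your induction appear to close when in fact it does not.

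Lemma~\ref{lem:bump} says that a \emph{peak} ($a_0\ge 0,\ a_1\le 0$) yields $S_H^\xi(0,T)\ge S_H(\xi_{0,T})$, while a \emph{valley} ($a_0\le 0,\ a_1\ge 0$) yields $S_H^\xi(0,T)\le S_H(\xi_{0,T})$. You write that ``the complementary peak shape: there Lemma~\ref{lem:bump} gives $\le$ directly'' — that is the reverse of the actual statement. This matters decisively for your plan: on $[\tau_{\min},T]$ the path starts at its running minimum, so the \emph{first} linear piece has nonnegative slope, and hence the first sign change is an $(\text{up},\text{down})$ pair — a peak, not a valley. At this first pair Lemma~\ref{lem:bump} gives $S_H(\delta_2)\circ S_H(\delta_1)\ge S_H(\delta_1+\delta_2)$, the \emph{opposite} of what you need. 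The constraint ``running increment $\ge 0$'' does not forbid peaks at all (e.g.\ $\xi(\tau_{\min})=0,\ \xi(\tau_{\min}+1)=2,\ \xi(T)=1$ satisfies it and is a pure peak with no valley to collapse), so the assertion that ``every adjacent pair can be merged in the favorable direction'' is simply false, and your induction has no base move in that case. Your claim that collapsing peaks ``again lets one merge'' is what hides this; with the correct sign, a peak collapses in the $\ge$ direction, not the $\le$ direction.

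What actually works — and what the paper does — is to exploit the fact that the pair adjacent to the extremum itself always collapses in one definite direction: on $[0,\tau_{\max}]$ the \emph{last} pair is $(\text{down},\text{up})$ with $\xi_{t_{j-1},t_{j+1}}\ge 0$ (because $\xi(\tau_{\max})$ is the maximum), i.e.\ a valley, and Lemma~\ref{lem:bump} gives $\le$, which iterates cleanly to $S_H^\xi(0,\tau_{\max})\le S_H(\xi_{0,\tau_{\max}})$. Dually, on $[\tau_{\min},T]$ the \emph{first} pair is a peak, and the only thing one can iterate is the $\ge$ direction, giving $S_H^\xi(\tau_{\min},T)\ge S_H(\xi_{\tau_{\min},T})$. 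Two smaller issues: you describe $\tau_{\min}$ as the \emph{last} time of the minimum, whereas the paper defines $\tau_{\min}$ as the $\inf$ (first time) of the argmin; and your proposed $\xi\mapsto-\xi$ symmetry does not exactly interchange $\tau_{\max}$ and $\tau_{\min}$, since the paper defines one with $\sup$ and the other with $\inf$, so the two halves genuinely require parallel (not mirrored) arguments, which is what the paper's ``the proofs are similar'' phrasing indicates.
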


\begin{proof}
Since the proofs of both inequalities are similar, we only show the details for the first. 
\smallskip

Note that without loss of generality we may assume that $\sgn(\xi_{t_{i-1},t_{i}})=-\sgn(\xi_{t_{i},t_{i+1}})$ for all $[t_{i-1},t_{i+1}]\subseteq[0,\tau_{max}]$. 
\smallskip

It follows that, if  $\xi_{|[0,\tau_{max}]}$ is linear, then $S_{H}^{\xi}(0,\tau_{max})=S_{H}(\xi_{0,\tau_{max}})$.
\smallskip

If not, since $\xi_{0,\tau_{max}}\ge0$, there is an index $j$ such that $\xi_{t_{j-1},t_{j+1}}\ge0$ and $\xi_{t_{j-1},t_{j}}\le0$. It then follows from Lemma \ref{lem:bump} that 
\[
S_{H}^{\xi}(0,\tau_{max})\le S_{H}^{\tilde{\xi}}(0,\tau_{max}), 
\]
 where $\tilde{\xi}$ is piecewise linear and coincides with $\xi$ for all $t\in\{t_{i}:\ i\ne j\}$. 
\smallskip
 
A simple iteration yields $S_{H}^{\xi}(0,\tau_{max})\le S_{H}(\xi_{0,\tau_{max}})$, and, since $S_{H}^{\xi}(0,T)=S_{H}^{\xi}(\tau_{max},T)\circ S_{H}^{\xi}(0,\tau_{max})$, this concludes the proof.

\end{proof}
We combine the conclusions of the previous lemmata to establish the following monotonicity result. 
\begin{cor} \label{cor:Monotone}
Let $\xi,\zeta$ be piecewise linear, $\xi(0)=\zeta(0)$, $\xi(T)=\zeta(T)$ and $\xi\leq\zeta$ on $[0,T]$. Then
\begin{equation} \label{eq:Monotone}
S_{H}^{\xi}(0,T)\leq S_{H}^{\zeta}(0,T).
\end{equation}
\end{cor}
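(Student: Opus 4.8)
The plan is to prove \eqref{eq:Monotone} by deforming $\xi$ into $\zeta$ through a finite chain of piecewise linear paths, each obtained from the previous one by raising the value at a single node, and to reduce each such elementary step to a two-segment inequality handled by the preceding lemmata. Three facts will be used repeatedly: each $S_{\pm H}(t)$ is order preserving (comparison principle, \cite{CL92}), hence so is every composition $S_H^\eta(s,t)$ with $\eta$ piecewise linear; restricted to $[0,\infty)$ and to $(-\infty,0]$, $S_H(\cdot)$ is a one-parameter semigroup, so $S_H(a)\circ S_H(b)=S_H(a+b)=S_H(b)\circ S_H(a)$ whenever $a,b$ have the same sign; and $S_H^\eta(0,T)$ depends on $\eta$ only through its increments, so it is unchanged if $\eta$ is shifted by a constant, which lets us normalize freely.

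\emph{Reduction to raising one node.} Passing to a common refinement, I would assume $\xi,\zeta$ piecewise linear on one partition $0=t_0<\dots<t_N=T$, and for $0\le j\le N-1$ define $\xi^{(j)}$ to be the piecewise linear path with $\xi^{(j)}(t_i)=\zeta(t_i)$ for $i\le j$ and $\xi^{(j)}(t_i)=\xi(t_i)$ for $i>j$. Then $\xi^{(0)}=\xi$, $\xi^{(N-1)}=\zeta$, the paths $\xi^{(j-1)}$ and $\xi^{(j)}$ agree at every node except for $\xi^{(j-1)}(t_j)=\xi(t_j)\le\zeta(t_j)=\xi^{(j)}(t_j)$, and hence $\xi^{(j-1)}\le\xi^{(j)}$ on $[0,T]$. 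Since $\xi^{(j-1)}$ and $\xi^{(j)}$ also coincide on $[0,t_{j-1}]$ and on $[t_{j+1},T]$, the factorization $S_H^\eta(0,T)=S_H^\eta(t_{j+1},T)\circ S_H^\eta(t_{j-1},t_{j+1})\circ S_H^\eta(0,t_{j-1})$ together with the order preservation of the outer (identical) factors reduces \eqref{eq:Monotone}, by transitivity along the chain, to the following: if $\eta_1,\eta_2$ are affine on $[a,c]$ and on $[c,b]$ with $a<c<b$, $\eta_1(a)=\eta_2(a)$, $\eta_1(b)=\eta_2(b)$ and $\eta_1(c)\le\eta_2(c)$, then $S_H^{\eta_1}(a,b)\le S_H^{\eta_2}(a,b)$.

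\emph{The two-segment inequality.} For $r\ge0$ set $g(r):=S_{-H}(r)\circ S_H(r)$ and $\tilde{g}(r):=S_H(r)\circ S_{-H}(r)$; Lemma~\ref{lem:reduction1} gives $g(r)\ge\mathrm{id}\ge\tilde{g}(r)$. I would first observe that $g$ is nondecreasing and $\tilde{g}$ nonincreasing on $[0,\infty)$: regrouping the increments of size $h$ by the semigroup property, $g(r+h)=S_{-H}(r)\circ\big(S_{-H}(h)\circ S_H(h)\big)\circ S_H(r)=S_{-H}(r)\circ g(h)\circ S_H(r)\ge S_{-H}(r)\circ S_H(r)=g(r)$, since $g(h)\ge\mathrm{id}$ and $S_{-H}(r)$ is order preserving (the computation for $\tilde{g}$ being identical). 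Now normalize $\eta_i(a)=0$ and put $\beta:=\eta_i(b)$, $p_i:=\eta_i(c)$, so that $S_H^{\eta_i}(a,b)=S_H(\beta-p_i)\circ S_H(p_i)=:\Psi(p_i)$, and it remains to show $\Psi$ is nondecreasing. When $p$ lies between $0$ and $\beta$ the path $0\to p\to\beta$ is monotone and $\Psi(p)=S_H(\beta)$. When $p\ge\max(0,\beta)$, factoring $S_H(p)$ and $S_H(\beta-p)=S_{-H}(p-\beta)$ through $S_H(\beta)$ according to the sign of $\beta$ gives $\Psi(p)=g(p-\beta)\circ S_H(\beta)$ if $\beta\ge0$ and $\Psi(p)=S_H(\beta)\circ g(p)$ if $\beta\le0$, which is nondecreasing on $[\max(0,\beta),\infty)$ by the monotonicity of $g$; symmetrically, for $p\le\min(0,\beta)$ one obtains $\Psi(p)=\tilde{g}(|p|-|\beta|)\circ S_H(\beta)$ or $S_H(\beta)\circ\tilde{g}(|p|)$, nondecreasing on $(-\infty,\min(0,\beta)]$. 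Since all formulas take the value $S_H(\beta)$ at $p=\min(0,\beta)$ and $p=\max(0,\beta)$ (where $g(0)=\tilde{g}(0)=\mathrm{id}$), $\Psi$ is nondecreasing on $\R$, which completes the proof.

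\emph{Main obstacle.} The delicate part is the two-segment step: one has to see that raising the apex of a ``tent'' path reduces, after factoring through $S_H(\beta)$, precisely to the monotonicity of $r\mapsto S_{-H}(r)\circ S_H(r)$ and its dual. That monotonicity is a one-line consequence of Lemma~\ref{lem:reduction1} once the increments are regrouped using commutativity of the semigroup, but deducing it from \eqref{eq:Monotone} itself applied to tent paths would be circular; and the bookkeeping for the various sign configurations of $0$, $p_i$, $\beta$ is where some care is needed.
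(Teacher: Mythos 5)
Your proof is correct. The overall architecture matches the paper's — reduce to a two-segment ("tent") inequality, then prove that inequality using the convexity encoded in Lemma~\ref{lem:reduction1} — but the execution differs at both stages in ways worth noting. For the reduction, the paper argues by a two-part induction on $N$, introducing the hybrid path $\rho$ which borrows $\zeta$ on $\{t_0,t_1\}$ and $\xi$ on $\{t_2,\dots,t_N\}$ and applying the inductive hypothesis on the overlapping windows $[0,t_2]$ and $[t_1,T]$; your chain $\xi^{(0)},\dots,\xi^{(N-1)}$ of single-node raises is essentially that induction unrolled, and the transitivity and order-preservation bookkeeping you carry out is equivalent. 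The more substantive divergence is in the tent inequality itself. The paper states it as $S_H(a+\gamma)\circ S_H(b-\gamma)\le S_H(a)\circ S_H(b)$ for $\gamma\ge0$ and settles it by a case analysis on the signs of $a$ and $a+\gamma$, each case being a short appeal to Lemma~\ref{lem:bump}. You instead bypass Lemma~\ref{lem:bump} entirely and go back one level to Lemma~\ref{lem:reduction1}: you extract the operators $g(r)=S_{-H}(r)\circ S_H(r)$ and $\tilde g(r)=S_H(r)\circ S_{-H}(r)$, prove their monotonicity by the commutation $g(r+h)=S_{-H}(r)\circ g(h)\circ S_H(r)$ (which is indeed noncircular since it only uses $g(h)\ge\mathrm{id}$ from Lemma~\ref{lem:reduction1}, not the corollary being proved), and then express $\Psi(p)=S_H(\beta-p)\circ S_H(p)$ as $g$ or $\tilde g$ pre- or post-composed with $S_H(\beta)$, depending on the relative position of $p$, $0$, and $\beta$. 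Your formulation has the advantage of making explicit that the tent operator is constant on $[\min(0,\beta),\max(0,\beta)]$ and evolves monotonically through the one-parameter families $g$ and $\tilde g$ outside that interval, at the cost of a somewhat longer sign analysis than the paper's three-case argument that leans on the already-proved Lemma~\ref{lem:bump}.
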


\begin{proof}
We assume that $\xi$ and $\zeta$ are piecewise linear on each interval $[t_i,t_{i+1}]$ of a joint subdivision $0=t_0 \leq \ldots \leq t_N=T$ of $[0,T]$.
\smallskip

If  $N=2$, we show that, for all  $\gamma \geq 0$ and all $a, b \in \R$,
\begin{equation} \label{eq:Monotone2}
S_H(a+\gamma) \circ S_H(b-\gamma) \leq S_H(a) \circ S_H(b).
\end{equation}
If $a\geq 0$, this follows from the fact that, in view of  Lemma \ref{lem:bump}, \[S_H(\gamma) \circ S_H(b-\gamma) \leq S_H(b).\] 
If $a+\gamma \leq 0$, then again Lemma \ref{lem:bump} yields  \[S_H(a) \circ S_H(b) = S_H(a+ \gamma) \circ S_H(-\gamma) \circ S_H(b) \geq S_H(a+ \gamma) \circ S_H(b-\gamma).\]
 Finally, if $a \leq 0 \leq a+\gamma$ we have \[S_H(a) \circ S_H(b) \geq S_H(a+b) \geq S_H(a+\gamma) \circ S_H(b-\gamma).\]

The proof for $N>2$ follows  by induction on $N$. Let $\rho$ be piecewise linear on the same partition and coincide with $\zeta$ on $t_0, t_1$, and with $\xi$ on $t_2, \ldots, t_N$. The induction hypothesis then yields
\[
S_{H}^{\xi}(0,t_2)\leq S_{H}^{\rho}(0,t_2) \ \  \text{and} \ \ S_{H}^{\rho}(t_1,T)\leq S_{H}^{\zeta}(t_1,T)
\]
from which we deduce
\[
S_{H}^{\xi}(0,T)\leq S_{H}^{\rho}(0,T) \leq S_{H}^{\zeta}(0,T).
\]
 \end{proof}

\subsection*{A uniform modulus of continuity} To extend the information obtained about the possible cancellations and oscillations from piecewise linear  to arbitrary continuous paths, we need  a well-posed theory for the pathwise viscosity solutions. Such a theory has been developed by the last two authors in \cite{LS-book} and \cite{LS18+}. The former reference imposes  conditions on the joint  dependence of the Hamiltonians in $(p,x)$ but does not require convexity. A special (resp. a more general) class  of convex or concave Hamiltonians, which do not require such conditions, is studied in Friz, Gassiat, Lions and Souganidis \cite{FGLS16} (resp.  Lions and Souganidis \cite{LS-college}).  
An alternative, although less intrinsic,  approach is to show that the solution operator has a unique extension from piecewise linear paths to arbitrary continuous ones. %
\smallskip

To avoid stating additional conditions and since finding the optimal assumptions on the joint dependence on $(p,x)$ of the Hamiltonians is not the main focus of this paper, we bypass this issue here. Instead, we formulate a general assumption that allows to have a unique extension of the solution operator to all continuous paths, which is enough to analyze the domain of dependence. We only remark that this assumption is satisfied by the Hamiltonians considered in \cite{LS18+} as well as some other ones that can be analyzed by the same methods. 
\smallskip

For $t \in (0,T)$, the minimal action, also known as the fundamental solution, associated with  Hamiltonians satisfying \eqref{takis1} is given by 
$${\mathcal L}( x, y, t): = \inf\{ \int_0^t L(\dot{\gamma}(s),{\gamma}(s)) ds:\;\; \gamma \in C^{0,1}([0,T]) \ \text{such that} \ \gamma(0)=x, \gamma(t)=y \}.$$
When we need to emphasize the dependence of $\mathcal L$ \blue{on $H$},  we write ${\mathcal L}^H$.
\smallskip

We recall (see, for example, \cite{LionsBook}) that, for all $t, s \geq 0$ and  $x,y,z \in \R^d$, 
\begin{equation} \label{eq:dpp}
{\mathcal L}(x,z, t+s) \leq  {\mathcal L}(x,y, t) + {\mathcal L}(y,z, s).
\end{equation}
 
Moreover, for any $u_0 \in \text{BUC}(\R^d)$, $t \geq 0$ and $x \in \R^d,$
\begin{equation}\label{takis30}
u(x,t)=S_H(t)u_0(x)= \sup_{ y \in \R^d} \left[ u_0(y) - {\mathcal L}(x,y, t)\right].
\end{equation}

Finally, since  $-S_{-H}(t)u_0=S_{\widecheck{H}}(-u_0)$  with $\widecheck{H}(p,x)=H(-p, x)$, we also have, for any $u_0 \in \text{BUC}(\R^d)$, $t \geq 0$ and $x \in \R^d,$
\begin{equation}\label{takis31}
S_{-H}(t)u_0(x)=\inf_{y\in \R^d}  \left[ u_0(y) + {\mathcal L}^{\widecheck{H}}(x,y,t)\right] \blue{=\inf_{y\in \R^d}  \left[ u_0(y) + {\mathcal L}(y,x,t)\right]}.
\end{equation}



We assume that, for all $r>0$,  
\begin{equation} \label{eq:asnA0r}
\;\;\limsup_{\delta \to 0} \inf_{r \leq |x-y|} {\mathcal L}(x,y, \delta) = +\infty,
\end{equation}
and 
\begin{equation} \label{eq:asnAt0}
\lim_{\delta \to 0} \lim_{ r \to 0} \sup_{|x-y| \leq r}{\mathcal L}(x,y, \delta) = 0.
\end{equation}
Note that  \eqref{eq:asnAt0} is some sort of controllability assumption, while \eqref{eq:asnA0r} follows from a uniform in $x$ upper bound on $H$.

%
%

\begin{prop} \label{prop:modulus}
If \eqref{eq:asnA0r} and \eqref{eq:asnAt0} hold, then, for each $u_0$ $\in$ $BUC(\R^d)$ and $T \geq 0$, the family
\[ \left\{ S_H^\xi (0,T)u_0: \;\;\; \xi \mbox{ piecewise linear} \right\}
\]
has a uniform  modulus of continuity.
\end{prop}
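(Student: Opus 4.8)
The plan is to establish \eqref{takis30}-type estimates, uniform over all piecewise linear paths, by exploiting the fact that $S_H^\xi(0,T)u_0$ is built by composing the operators $S_{\pm H}$, each of which is represented via the fundamental solution $\mathcal L$. Concretely, for a piecewise linear $\xi$ with nodes $0=t_0\le\dots\le t_N=T$ and increments $\xi_{t_i,t_{i+1}}$, iterating \eqref{takis30} and \eqref{takis31} gives a representation of $S_H^\xi(0,T)u_0(x)$ as a $\sup/\inf$ over chains $(y_0=x,y_1,\dots,y_N)$ of expressions of the form $u_0(y_N)$ plus a sum of terms $\pm\mathcal L(\cdot,\cdot,|\xi_{t_i,t_{i+1}}|)$ (with the sign and the order of the arguments depending on $\sgn(\xi_{t_i,t_{i+1}})$). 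The key structural input is Corollary~\ref{cor:Monotone} together with Lemma~\ref{lem:reduction3}: these let us reduce the analysis to bound the spatial displacement $|y_N-y_0|$ and the ``cost'' in terms of the oscillation of $\xi$, independently of the number of nodes $N$.

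The first step is to prove a \emph{uniform spatial localization}: there is a modulus $\omega_1$, depending only on $H$ and $T$, such that in the $\sup/\inf$ defining $S_H^\xi(0,T)u_0(x)$ only points $y$ with $|y-x|\le \omega_1\big(\|\xi\|_\infty\big)$ are relevant — or, more precisely, that the finite-speed estimate \eqref{eq:fsp-2} (already available from \cite{LS98b}, and recoverable here from Lemma~\ref{lem:bump} and Corollary~\ref{cor:Monotone}) bounds the range of dependence by $\max_{[0,T]}\xi-\min_{[0,T]}\xi$. This uses assumption \eqref{eq:asnA0r}: when $|x-y|$ is large, $\mathcal L(x,y,\delta)\to\infty$ as $\delta\to0$, so chains with large cumulative displacement are penalized so heavily that they cannot be near-optimal, uniformly in $N$ since the total ``time budget'' $\sum|\xi_{t_i,t_{i+1}}|$ need not be controlled but the \emph{net} displacement is controlled by the monotonicity/reduction lemmas. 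The second step is the \emph{equicontinuity in $x$}: fix $x,x'$ with $|x-x'|$ small; comparing the two $\sup/\inf$ representations over chains, one transports a near-optimal chain for $x$ to a competitor chain for $x'$ by shifting the first node, paying an extra cost controlled by \eqref{eq:asnAt0} and $\limsup_{\delta\to 0}$ behaviour, plus the modulus of $u_0$ at the shifted terminal point; the spatial localization from step one keeps everything in a fixed compact set. Combining, $S_H^\xi(0,T)u_0$ inherits a modulus of continuity $\omega$ depending only on $\omega_{u_0}$, $H$, $T$ (and the bounds in \eqref{eq:asnA0r}, \eqref{eq:asnAt0}), but \emph{not} on $\xi$ or $N$.

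The main obstacle I anticipate is making the cost/displacement bookkeeping genuinely uniform in $N$: a naive chain of length $N$ accumulates $N$ fundamental-solution factors, and one must avoid an $N$-dependent loss. This is precisely where Lemma~\ref{lem:reduction3} and Corollary~\ref{cor:Monotone} must be used in a quantitative way — collapsing consecutive ``bumps'' so that the effective number of sign changes that matter is controlled by the geometry of $\xi$ (ultimately its reduced path), not by the arbitrary partition. A clean way to organize this: first prove the proposition for $\xi$ \emph{monotone} on $[0,T]$, where $S_H^\xi(0,T)=S_{\pm H}(\xi_{0,T})=S_{\pm H}$ applied for time $|\xi_{0,T}|\le \|\xi\|_\infty$ and the modulus is standard from \eqref{takis30}; then for general piecewise linear $\xi$ use Lemma~\ref{lem:reduction3} to squeeze $S_H^\xi(0,T)$ between $S_H(\xi_{\tau_{min},T})\circ S_H^\xi(0,\tau_{min})$ and $S_H^\xi(\tau_{max},T)\circ S_H(\xi_{0,\tau_{max}})$, iterate the reduction across successive extrema, and at each stage the two enveloping operators are compositions over \emph{monotone} pieces plus strictly fewer remaining oscillations, so an induction on the number of extrema (each contributing a displacement bounded by an increment of $\xi$) closes the argument with $N$-independent constants. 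The remaining estimates — continuity of $(x,y,t)\mapsto\mathcal L$ near $t=0$, transporting near-optimal chains — are then routine given \eqref{eq:asnA0r} and \eqref{eq:asnAt0}.
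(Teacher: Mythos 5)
Your proposal correctly identifies the central obstacle — that a naive chain of fundamental-solution factors accumulates an $N$-dependent loss — but the proposed fix does not close that gap, and the key idea of the paper's proof is missing. The paper's argument hinges on Proposition~\ref{prop:Aestimate}: the specific estimate
\[
u(x,t)-u(y,t)\;\le\;\inf_{\delta>0}\Bigl(\mathcal L(y,x,\delta)+\sup_{x',y'}\bigl[u_0(x')-u_0(y')-\mathcal L(y',x',\delta)\bigr]\Bigr)
\]
is \emph{self-reproducing} under both $S_H(t)$ and $S_H(-t)$, so it propagates through an arbitrarily long composition without degradation. The proof of that invariance uses only the triangle-type inequality \eqref{eq:dpp} for $\mathcal L$; once it is in place, Proposition~\ref{prop:modulus} is immediate by taking $\omega(r)=\sup_{|x-y|\le r}\max[\nu(x,y),\nu(y,x)]$ with $\nu$ defined from the right-hand side, and verifying $\omega(r)\to 0$ from \eqref{eq:asnA0r}, \eqref{eq:asnAt0}. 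Your plan of ``reduce to the skeleton and induct on extrema'' does not substitute for this, for two reasons.

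First, the operator sandwich from Lemma~\ref{lem:reduction3}, $A\le S_H^\xi(0,T)\le B$, does not transfer a modulus of continuity to $S_H^\xi(0,T)u_0$: a function trapped between two equicontinuous functions is not itself equicontinuous unless $\|Au_0-Bu_0\|_\infty$ is small, and there is no such smallness here. Second, even after full reduction the skeleton $R_{0,T}(\xi)$ can have arbitrarily many pieces, so ``induction on the number of extrema'' still needs an $N$-uniform bound; claiming that each extremum ``contributes a displacement bounded by an increment of $\xi$'' does not control the modulus, because the increments of the reduced path sum to $\|R_{0,T}(\xi)\|_{TV}$, which is unbounded over the class of piecewise linear paths. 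What actually prevents the degradation is the \emph{invariance} of the specific estimate, i.e.\ that the modulus-candidate is a fixed point of one application of $S_H(\pm t)$; this is the idea you need and it is orthogonal to the skeleton-reduction machinery (which the paper develops afterwards, using the present proposition as a building block — so relying on it here would also be circular).
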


%
%

The claim above 
is a consequence of the following estimate.

\begin{prop} \label{prop:Aestimate}
 Let $u=S^\xi_H(0,t)u_0$ with $\xi$ piecewise linear and $u_0$ $\in$ $BUC(\R^d)$. Then, for all $ t \geq 0$ and all  $x,y \in \R^d$,  \;\;
\begin{equation} \label{eq:estimate}
u(x,t) - u(y,t) \leq \inf_{\delta >0 } \left( {\mathcal L}(y,x, \delta) + \sup_{x',y' \in \R^d} \left[u_0(x') - u_0(y') -  {\mathcal L}(y',x',\delta) \right] \right). 
\end{equation}
\end{prop}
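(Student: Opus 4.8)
## Proof plan for Proposition \ref{prop:Aestimate}

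The plan is to exploit the fact that for a piecewise linear path $\xi$, the operator $S^\xi_H(0,t)$ is a finite composition of forward solution operators $S_H(\cdot)$ and backward solution operators $S_{-H}(\cdot)$, and then to understand how the representation formulas \eqref{takis30} and \eqref{takis31} propagate a one-sided Lipschitz-type bound of the form on the right-hand side of \eqref{eq:estimate}. Concretely, I would define, for a function $w \in \text{BUC}(\R^d)$, the quantity
\[
\Phi(w) := \sup_{x,y \in \R^d}\inf_{\delta > 0}\left( w(x) - w(y) - {\mathcal L}(y,x,\delta)\right)^{+},
\]
or, more usefully for an inductive argument, I would track for each $\delta>0$ the functional $w \mapsto \sup_{x,y}\left[w(x)-w(y)-{\mathcal L}(y,x,\delta)\right]$. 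The proposition then amounts to showing this quantity is non-increasing under each elementary step $S_H(a)$ and $S_{-H}(a)$, and that it starts (at the level of $u_0$) at exactly the second term inside the infimum in \eqref{eq:estimate}.

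The key computations are as follows. First I would treat a single forward step: if $v = S_H(a)w$ with $a\ge 0$, then by \eqref{takis30}, $v(x) = \sup_{x'}[w(x')-{\mathcal L}(x,x',a)]$, and for any $y$, $v(y)\ge w(y')-{\mathcal L}(y,y',a)$ for every $y'$. Choosing $x'$ nearly optimal for $v(x)$ and then, using the sub-additivity \eqref{eq:dpp} in the form ${\mathcal L}(y,x',a+\delta)\le {\mathcal L}(y,x,a)+{\mathcal L}(x,x',a)$ — wait, I need it the other way: ${\mathcal L}(y,x',\delta+a)\le {\mathcal L}(y, x, \delta) + {\mathcal L}(x,x',a)$ — I can take $y'=x'$ and estimate
\[
v(x)-v(y) \le \left[w(x')-{\mathcal L}(x,x',a)\right] - \left[w(x') - {\mathcal L}(y,x',\delta+a)\right] = {\mathcal L}(y,x',\delta+a) - {\mathcal L}(x,x',a).
\]
Hmm — this does not immediately produce a ${\mathcal L}(y,x,\delta)$ term plus an $x_0$–independent remainder. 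The cleaner route is: $v(x)\le w(x')-{\mathcal L}(x,x',a)$ is false (it's a sup), so instead pick $x'$ $\varepsilon$-optimal in $v(x)$, bound $v(y)\ge w(x')-{\mathcal L}(y,x',a)$, subtract, and then insert a point via \eqref{eq:dpp}: write ${\mathcal L}(y,x',a) \le {\mathcal L}(y, x, \delta)+{\mathcal L}(x,x', a')$ is not what I want either. The actually useful inequality is the one giving, for the \emph{interior} variable, $\inf_{z}\big({\mathcal L}(y,z,\delta)+[\text{stuff in }z]\big)$; so I would carry the estimate in the form \eqref{eq:estimate} itself as the inductive invariant, peeling off one ${\mathcal L}$ at a time and repeatedly using \eqref{eq:dpp} to merge the accumulated time increments into a single $\delta$. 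For a backward step $v=S_{-H}(a)w$, use \eqref{takis31}: $v(x)=\inf_{x'}[w(x')+{\mathcal L}(x',x,a)]$, and a symmetric manipulation applies. In both cases the invariant \eqref{eq:estimate} is preserved with the internal time parameter increased by $a$ (and then re-optimized over $\delta$, which only helps since we may always shrink back). Finally, at the base of the induction, $u\equiv u_0$, and \eqref{eq:estimate} reads $u_0(x)-u_0(y)\le \inf_\delta({\mathcal L}(y,x,\delta) + \sup_{x',y'}[u_0(x')-u_0(y')-{\mathcal L}(y',x',\delta)])$, which holds trivially by taking $x'=x$, $y'=y$ inside the supremum.

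I expect the main obstacle to be bookkeeping the direction and composition of the ${\mathcal L}$–terms across alternating forward/backward steps, since forward steps naturally produce ${\mathcal L}(x,\cdot,\cdot)$ on the "source" slot while backward steps (through $\widecheck H$, i.e.\ ${\mathcal L}(\cdot,x,\cdot)$) flip the roles of the two spatial arguments; making the inductive invariant robust to this flip — likely by stating it symmetrically and invoking \eqref{eq:dpp} on whichever side the new increment lands — is the delicate point. A secondary technical nuisance is that the infimum over $\delta$ and the supremum over $x',y'$ must be handled with $\varepsilon$-optimal choices rather than attained optima, but this is routine given that ${\mathcal L}(\cdot,\cdot,\delta)\to+\infty$ off the diagonal as $\delta\to 0$ by \eqref{eq:asnA0r}, which guarantees the relevant suprema are attained on compact sets. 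Note that Proposition \ref{prop:modulus} then follows immediately: \eqref{eq:estimate} together with \eqref{eq:asnAt0} (controlling the $\sup$ term via the modulus of $u_0$ for small $\delta$) and \eqref{eq:asnA0r} (forcing $|x-y|$ small when the right-hand side is small) yields a modulus of continuity for $S^\xi_H(0,T)u_0$ that is uniform in the piecewise linear path $\xi$.
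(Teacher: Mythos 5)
Your overall architecture is exactly the paper's: reduce by induction to a single forward step $S_H(t)$ (the backward step being symmetric via \eqref{takis31}), and show the functional $\delta \mapsto \sup_{x,y}\bigl[w(x)-w(y)-{\mathcal L}(y,x,\delta)\bigr]$ is non-increasing under the flow. Where you get stuck --- and say so --- is precisely the single-step estimate, and the fix you gesture at (``peeling off one ${\mathcal L}$ at a time and merging time increments into a single $\delta$'') is not the right move: the $\delta$ is not accumulated, it stays the same, and you must choose the matching point in the $v(y)$-supremum cleverly, not equal to the one in the $v(x)$-supremum.

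The missing idea is this. For $v=S_H(t)u_0$, fix $x,y,\delta$ and an arbitrary $x_1$ (the candidate in the sup defining $v(x)$). Let $\gamma$ be an (approximate) minimizer for ${\mathcal L}(y,x_1,t+\delta)$ and set $\tilde y_1:=\gamma(t)$. Restricting $\gamma$ to $[0,t]$ and to $[t,t+\delta]$ gives admissible curves for ${\mathcal L}(y,\tilde y_1,t)$ and ${\mathcal L}(\tilde y_1,x_1,\delta)$, so
\[
{\mathcal L}(y,\tilde y_1,t)+{\mathcal L}(\tilde y_1,x_1,\delta)\ \le\ {\mathcal L}(y,x_1,t+\delta)\ \le\ {\mathcal L}(y,x,\delta)+{\mathcal L}(x,x_1,t),
\]
the second inequality being \eqref{eq:dpp} with intermediate point $x$. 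This is a two-sided squeeze around ${\mathcal L}(y,x_1,t+\delta)$: the right-hand inequality is the ``sub-additive'' direction of \eqref{eq:dpp} you were using, while the left-hand one is the complementary ``splitting'' direction, which only becomes available once you place $\tilde y_1$ \emph{on a minimizer}. Using $v(y)\ge u_0(\tilde y_1)-{\mathcal L}(y,\tilde y_1,t)$ then gives
\[
\bigl[u_0(x_1)-{\mathcal L}(x,x_1,t)\bigr]-v(y)-{\mathcal L}(y,x,\delta)\ \le\ u_0(x_1)-u_0(\tilde y_1)-{\mathcal L}(\tilde y_1,x_1,\delta)\ \le\ \eta(\delta),
\]
and taking the sup over $x_1$ yields $v(x)-v(y)\le{\mathcal L}(y,x,\delta)+\eta(\delta)$. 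Your attempted choice $y'=x'$ could not work because it discards the $\delta$-action between the two candidate points; the intermediate point must absorb exactly that $\delta$-segment, and the only way to make the inequality close is to place it on the optimal trajectory. With that one device your plan goes through verbatim, and the backward step is the same computation read through $\widecheck H$.
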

 
\begin{proof}
%
By induction it is enough to prove the estimate for $u=S_H(t)u_0$ and $u=S_H(-t)u_0$.
\smallskip

We begin  with the former and we fix $x, y \ \text{and} \ x_1 \in \R^d$. Assuming in what follows  the $\inf$ in the definition of $\mathcal{L}$ is attained, otherwise we work with approximate minimizers, we choose 
%
 $\gamma$ to be a  minimizer for ${\mathcal L}(y,x_1, t+\delta)$ and set $\tilde{y}_1= \gamma(t)$. It follows from  \eqref{eq:dpp} that 
\begin{align*}
{\mathcal L} (y,x, \delta) +{\mathcal L}(x,x_1,t)  \geq {\mathcal L}(y,x_1, t+\delta) & = \int_0^t L(\dot{\gamma}(s),\gamma(s)) ds +  \int_t^{t+\delta} L(\dot{\gamma}(s),\gamma(s)) ds\\
&\geq {\mathcal L}(y,\tilde{y}_1,t) + {\mathcal L}(\tilde{y}_1,x_1,\delta).
\end{align*}
Hence 
\begin{align*}
&\left( u_0(x_1) - {\mathcal L}(x,x_1, t)\right)  - \sup_{y_1 \in \R^d}\left\{  u_0(y_1) - {\mathcal L}(y,y_1,t) \right\} - {\mathcal L}(y,x, \delta) \\
\leq& \;  u_0({x}_1) -  u_0(\tilde{y}_1)  -  {\mathcal L}(x,x_1,t) + {\mathcal L}(y,\tilde{y}_1,t)   - {\mathcal L}(y,x, \delta) \\
\leq &\;  u_0({x}_1) -  u_0(\tilde{y}_1)  - {\mathcal L}(\tilde{y}_1,x_1,\delta)
\leq \; \sup_{x',y' \in \R^d} \left\{u_0(x') - u_0(y') -  {\mathcal L}(y',x',\delta) \right\}.
\end{align*}
It follows that 
\[u(x,t)-u(y,t) - {\mathcal L}(y,x, \delta) \leq \sup_{x',y'} \left\{u_0(x') - u_0(y') -  {\mathcal L}(y',x',\delta) \right\} \]
and we conclude by taking the inf over $\delta$.
\smallskip

In view of \eqref{takis31}, a similar argument gives the estimate for $u=S_{-H}(t)u_0$. 

\end{proof}

\begin{proof}[Proof of Proposition \ref{prop:modulus}]
Fix $u_0$ and let 
$$\eta(\delta): =  \sup_{x', y' \in \R^d} \left(u_0(x') - u_0(y') -    {\mathcal L}(y',x', \delta) \right)$$ 
and
$$\nu(x,y) := \inf_{\delta >0}  \left( {\mathcal L}(y,x, \delta) + \eta(\delta) \right),\;\; \; \omega(r) := \sup_{|x-y| \leq r} \max [\nu(x,y), \nu(y,x)].$$
If follows from Proposition \ref{prop:Aestimate} that, if $v=S_H^\xi u_0$, then $\left|v(x) - v(y) \right| \leq \omega(|x-y|)$. On the other hand, in view of \eqref{eq:asnA0r}, $\underset{{\delta \to 0}}\lim
\eta(\delta) = 0$, and, hence, using  \eqref{eq:asnAt0} we conclude that $\underset{{r \to 0}}\lim
\omega(r) = 0$.

\end{proof}

\subsection*{Extension and reduction} The extension result is stated next. In this subsection, we always assume that either $H$ is independent of $x$ or that \eqref{eq:asnA0r} and \eqref{eq:asnAt0} hold. In what follows we write $\|\cdot\|_{\infty, \mathcal O}$ for the $L^\infty$-norm over $\mathcal O$. 

\begin{cor}
\label{cor:continuity}The map $\xi\mapsto S_{H}(\xi)$ is uniformly continuous in the sup-norm in the sense that, if $(\xi^n)_{n\in \N}$ is a sequence of  piecewise-linear functions on $[0,T]$ with $\lim_{n,m\to\infty}\|\xi^n-\xi^m\|_{\infty,[0,T]}= 0$, then, for all $u \in BUC(\R^d)$, 
\begin{equation}\label{takis40}
\underset{n,m\to\infty}\lim 
 \|S^{\xi^n}_H(0,T)u-S^{\xi^m}_H(0,T)u\|_{\infty,\R^d} = 0.
\end{equation}
\end{cor}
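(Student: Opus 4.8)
The plan is to prove the Cauchy-type estimate \eqref{takis40} by quantifying how $S_H^\xi(0,T)u$ changes when $\xi$ is perturbed in sup-norm, using the two estimates already available: the uniform modulus of continuity from Proposition \ref{prop:modulus} and the monotonicity result Corollary \ref{cor:Monotone}. The key observation is that if $\xi,\zeta$ are piecewise linear with the same endpoints and $\|\xi-\zeta\|_{\infty,[0,T]}\leq\epsilon$, then $\zeta-\epsilon \leq \xi \leq \zeta+\epsilon$, so if we introduce the "shifted" paths $\zeta^\pm := \zeta \pm \epsilon \cdot \chi$ for a suitable small "bump" correction (needed to restore the endpoint values, since $\zeta\pm\epsilon$ no longer vanishes at $0$ and no longer agrees with $\xi$ at $T$), Corollary \ref{cor:Monotone} will sandwich $S_H^\xi(0,T)u$ between $S_H^{\zeta^-}(0,T)u$ and $S_H^{\zeta^+}(0,T)u$. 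Thus it suffices to control $S_H^{\zeta^\pm}(0,T)u - S_H^{\zeta}(0,T)u$.

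First I would make the reduction to comparing a path $\zeta$ with the path $\zeta$ shifted vertically by a constant $c$ with $|c|$ small, together with short linear pieces at the ends of length $O(\epsilon)$ to fix up the boundary values. A vertical shift of $\zeta$ by $c$ over $(0,T]$ amounts, at the level of the flow, to pre-composing and post-composing with $S_H(\pm c)$: concretely $S_H^{\zeta+c\,1_{(0,T]}}(0,T) = S_H(-c)\circ S_H^{\zeta}(0,T)\circ S_H(c)$ up to reorganizing the first and last linear segments, because adding a constant to $\zeta$ on $(0,T]$ changes only the first increment by $+c$ and leaves all interior increments unchanged. Then I would estimate $\|S_H(c)u - u\|_\infty$ and use that $S_H^\zeta(0,T)$ and $S_H(-c)$ are each non-expansive perturbations in the appropriate sense — more precisely, $S_H^\zeta(0,T)$ maps the set $\{v: |v(x)-v(y)|\leq\omega(|x-y|)\}$ into a $\sup$-equicontinuous family, so a small $\sup$-perturbation of the input yields a controlled $\sup$-perturbation of the output. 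Using \eqref{takis30}–\eqref{takis31} and the controllability assumption \eqref{eq:asnAt0}, one gets $\|S_H(\pm c)v - v\|_{\infty,\R^d}\to 0$ as $c\to 0$ uniformly over $v$ in a fixed equicontinuity class, and similarly the boundary linear pieces of length $O(\epsilon)$ contribute an error tending to $0$ with $\epsilon$.

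Assembling these pieces: given the uniform modulus $\omega$ for the family $\{S_H^\xi(0,T)u\}$ and a second uniform modulus $\omega_T$ controlling $\|S_H^{\xi}(0,s)u - u\|_\infty$ for $s\in[0,T]$ small (again from Proposition \ref{prop:Aestimate} applied with a short path), one obtains a bound of the form $\|S_H^{\xi}(0,T)u - S_H^{\zeta}(0,T)u\|_{\infty,\R^d}\leq \Phi(\|\xi-\zeta\|_{\infty,[0,T]})$ with $\Phi(\epsilon)\to 0$ as $\epsilon\to 0$, for all piecewise linear $\xi,\zeta$; applying this to $\xi^n,\xi^m$ gives \eqref{takis40}.

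The main obstacle I anticipate is the bookkeeping around the endpoints: the naive comparison "$\zeta-\epsilon\leq\xi\leq\zeta+\epsilon$" cannot be fed directly into Corollary \ref{cor:Monotone} because the shifted paths violate $\xi(0)=\zeta(0)=0$ and $\xi(T)=\zeta(T)$, so one must insert short linear correction segments and then argue that their effect on the flow is $O(\epsilon)$-small in sup-norm — this last point is exactly where \eqref{eq:asnAt0} (and, in the $x$-independent case, the explicit formula $\mathcal{L}(x,y,\delta)=\delta L((y-x)/\delta)$) is used, and it requires the equicontinuity of the intermediate data to make the bound uniform in the initial datum and in $n,m$.
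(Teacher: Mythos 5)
Your overall strategy coincides with the paper's: sandwich $S_H^{\zeta}(0,T)$ between two perturbations of $S_H^{\xi}(0,T)$ via Corollary \ref{cor:Monotone}, reduce those perturbations to pre-/post-composition with $S_H(\pm c)$ for small $c$, and close by showing $S_H(\pm c)v\to v$ uniformly over the equicontinuity class furnished by Proposition \ref{prop:modulus}. The one place your sketch does not quite go through as written is the "bump" construction $\zeta^{\pm}:=\zeta\pm\epsilon\chi$ with $\chi(0)=0$ (needed so that $\zeta^{\pm}(0)=0$): since $\chi$ is small near $t=0$ while $|\xi-\zeta|$ can be as large as $\epsilon$ there, the pointwise ordering $\zeta^{-}\le\xi\le\zeta^{+}$ fails near the left endpoint, so Corollary \ref{cor:Monotone} is not applicable to this pair of paths, and a uniform lower bound on $\chi$ would require a Lipschitz bound on $\xi^n$ that is not available. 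The paper avoids this by extending both paths as constants to $[-\delta,T+\delta]$ and placing the correction segments \emph{outside} $[0,T]$, where $\zeta$ is flat: with $\eta\in[-1,1]$ so that $\xi(T)=\zeta(T)+\eta\delta$, one sets $\dot{\xi}^{\pm\delta}=\pm1$ on $(-\delta,0)$ and $\dot{\xi}^{\pm\delta}=\mp1-\eta$ on $(T,T+\delta)$; then $\xi^{-\delta}\le\zeta\le\xi^{\delta}$ on all of $[-\delta,T+\delta]$ with matching endpoints, the composition identity $S_H^{\xi^{\pm\delta}}(-\delta,T+\delta)=S_H(\mp\delta-\eta\delta)\circ S_H^{\xi}(0,T)\circ S_H(\pm\delta)$ is exact (no "reorganizing" needed), and the rest of your argument then goes through exactly as in the paper.
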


It follows that  we can extend $\xi \mapsto S_H(\xi)$ to all continuous paths.  Indeed $\xi^n \to \xi$ in sup-norm as $n \to \infty$, then
\begin{equation}\label{takis41}
S^{\xi}_H(0,T)u:=\underset{n \to\infty}\lim S^{\xi^n}_H(0,T)u.
\end{equation}

{\it Proof of Corollary~\ref{cor:continuity}} 
\ Fix  $\d>0$ and let $\xi,\zeta$ be piecewise linear such that  $\|\xi-\zeta\|_{\infty}\le\d$ on $[0,T]$.  We extend $\xi,\zeta$ to all of $\R$ as constants on $(-\infty,0)$ and $(T,+\infty)$ 
and choose   $\eta \in[-1,1]$ 
such that $\xi(T)=\zeta(T)+\eta\delta$.
\smallskip

Let $\xi^{\pm\delta}$ be  defined by 
\[
\xi^{\pm\delta} := \begin{cases}  \xi \pm \delta  \  \mbox{ on }  \
[0,T],  \\[1.5mm]
 \xi  \  \mbox{ on } \ (-\infty, - \delta) \cup (T+\delta, + \infty), \end{cases}
 \]
and 
\[
\dot{\xi}^{\pm \d} = \pm 1 \  \mbox{ on }(-\delta,0) \ \ \text{and} \ \ \dot{\xi}^{\pm \d} = \mp 1 - \eta \mbox{ on }(T,T+\delta).
\]
It follows that  $\xi^\pm(-\delta)=\zeta(-\delta)$, $\xi^\pm(T+\delta)=\zeta(T+\delta)$, $\xi^{-\d}\le\zeta\le\xi^{\d}$, and, since $S_{H}^{\xi^{\pm\d}}(- \delta,T+\delta) = S_H( \mp \delta -\eta\d) \circ S_H^\xi(0,T) \circ S_H(\pm \delta)$, Corollary \ref{cor:Monotone} yields 
 \begin{equation*}\label{takis50}
 S_H(\delta(1-\eta)) \circ S_H^\xi(0,T) \circ S_H(-\delta) \leq S_{H}^{\zeta}(0,T) \leq S_H(-\delta(1+\eta)) \circ S_H^\xi(0,T) \circ S_H(\delta)
\end{equation*}
%
which implies that
\[
S_H^\xi(0,T) - \S_H(-\delta(1+\eta)) \circ S_H^\xi(0,T) \circ S_H(\delta)
\le S_{H}^{\xi}(0,T) - S_{H}^{\zeta}(0,T), 
\]
and 
\[
S_{H}^{\xi}(0,T) - S_{H}^{\zeta}(0,T) \le S_{H}^{\xi}(0,T) -  S_H(\delta(1-\eta)) \circ S_H^\xi(0,T)\circ S_H(-\delta).
\]
We now need to check that both sides of the above inequality go to $0$ as $\delta \to 0$. This  follows if
\[\underset{\delta \to 0}\lim \|S_H(\delta)\circ S_H^\xi(0,T)u - S_H(-\delta)\circ S_H^\xi(0,T)u\|_{\infty, \R^d} =0
\]
independently  of $\xi$, which is a consequence of  Proposition~\ref{prop:modulus}.

\qed
 

%
The next conclusion is an immediate consequence of Lemma \ref{lem:reduction3} and Corollary \ref{cor:continuity}. 
\begin{cor}
\label{cor:reduction1}Let $\xi$ be a continuous path such that $\xi_{T}=\max_{[0,T]}\xi$ and  $\xi_{0}=\min_{[0,T]}\xi$. Then, 
\[
S_{H}^{\xi}(0,T)=S_{H}(\xi_{0,T}).
\]
Similarly, if $\xi_{T}=\min_{[0,T]}\xi$ and  $\xi_{0}=\max_{[0,T]}\xi$, then
\[
S_{H}^{\xi}(0,T)=S_{-H}(-\xi_{0,T}).
\]
 
\end{cor}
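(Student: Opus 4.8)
The statement to prove is Corollary~\ref{cor:reduction1}: if $\xi$ is a continuous path with $\xi_T=\max_{[0,T]}\xi$ and $\xi_0=\min_{[0,T]}\xi$, then $S_H^\xi(0,T)=S_H(\xi_{0,T})$, and symmetrically when the roles of max and min are swapped.

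\textbf{Approach.} The plan is to combine the piecewise-linear estimate in Lemma~\ref{lem:reduction3} with the continuity/extension result of Corollary~\ref{cor:continuity}, approximating the general continuous path $\xi$ by piecewise-linear paths in the sup-norm. The point is that Lemma~\ref{lem:reduction3} already gives, for any piecewise linear $\zeta$, the sandwich
\[
S_H^\zeta(\tau_{max},T)\circ S_H(\zeta_{0,\tau_{max}})\le S_H^\zeta(0,T)\le S_H(\zeta_{\tau_{min},T})\circ S_H^\zeta(0,\tau_{min}),
\]
and in our situation both $\tau_{max}=T$ and $\tau_{min}=0$, so the right-hand inequality collapses to $S_H^\zeta(0,T)\le S_H(\zeta_{0,T})$ and the left-hand inequality collapses to $S_H^\zeta(0,T)\ge S_H(\zeta_{0,T})$, giving equality. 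So for piecewise-linear paths attaining their minimum at $0$ and maximum at $T$ the result is immediate.

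\textbf{Key steps.} First I would note that if $\xi$ is continuous with $\xi_0=\min_{[0,T]}\xi$ and $\xi_T=\max_{[0,T]}\xi$, one can choose a sequence $(\xi^n)$ of piecewise-linear paths converging to $\xi$ in sup-norm and which \emph{also} satisfy $\xi^n_0=\min_{[0,T]}\xi^n$ and $\xi^n_T=\max_{[0,T]}\xi^n$ — for instance by interpolating $\xi$ at a fine partition that always includes $0$ and $T$, then (if necessary) replacing the first and last segments by constant pieces equal to $\xi^n_0$ and $\xi^n_T$ respectively, or by a direct truncation $\xi^n(t):=\max\{\xi^n_0,\min\{\xi^n_T,\xi^n(t)\}\}$ which changes the path by at most the modulus of continuity of $\xi$. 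Second, for each $n$ Lemma~\ref{lem:reduction3} gives $S_H^{\xi^n}(0,T)=S_H(\xi^n_{0,T})$. Third, letting $n\to\infty$: the left side converges to $S_H^\xi(0,T)u$ for every $u\in BUC(\R^d)$ by the definition \eqref{takis41} (valid thanks to Corollary~\ref{cor:continuity}), and the right side converges to $S_H(\xi_{0,T})u$ because $\xi^n_{0,T}\to\xi_{0,T}$ and $t\mapsto S_H(t)$ is continuous in $t$ (indeed $S_H(\xi^n_{0,T})u = S_H(t^n)u$ with $t^n\to t:=\xi_{0,T}$, and $\|S_H(t^n)u-S_H(t)u\|_\infty\to0$ by the standard well-posedness of \eqref{takis100}). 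This yields $S_H^\xi(0,T)u=S_H(\xi_{0,T})u$ for all $u$, i.e.\ $S_H^\xi(0,T)=S_H(\xi_{0,T})$. Finally, the second assertion (min at $T$, max at $0$) follows by applying the first to the path $-\xi$ together with the identities $S_{-H}^{-\xi}(0,T)=S_H^\xi(0,T)$ and $S_{-H}(-t)=S_H(t)$ recorded in the preliminaries, exactly as in the last lines of the proof of Lemma~\ref{lem:bump}.

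\textbf{Main obstacle.} The only genuinely delicate point is the first step: constructing the piecewise-linear approximants so that they \emph{retain} the property of attaining their global minimum at the left endpoint and global maximum at the right endpoint, since a naive linear interpolant need not do so. The truncation trick above handles this cleanly — $\|\xi^n - \operatorname{trunc}(\xi^n)\|_\infty$ is controlled by how far $\xi^n_0,\xi^n_T$ are from $\min_{[0,T]}\xi,\max_{[0,T]}\xi$, which tends to $0$ — but it must be stated carefully so that the truncated path is still piecewise linear (it is, being the composition of a piecewise-linear path with the piecewise-linear map $z\mapsto\max\{c_1,\min\{c_2,z\}\}$) and still converges uniformly to $\xi$. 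Everything else is a routine passage to the limit using results already established in the excerpt.
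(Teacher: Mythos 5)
Your proof is correct and follows the paper's intended route: apply Lemma~\ref{lem:reduction3} with $\tau_{min}=0$, $\tau_{max}=T$ to get the result for piecewise-linear paths, then pass to the limit via Corollary~\ref{cor:continuity} (the paper itself records the corollary as an ``immediate consequence'' of exactly these two results). One small remark: the ``main obstacle'' you identify is actually automatic, since linearly interpolating $\xi$ at any partition containing $0$ and $T$ produces a piecewise-linear path whose values lie in $[\xi_0,\xi_T]$ and whose extrema are attained at nodes, hence at $0$ and $T$; no truncation is needed.
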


It follows that we can have  a general representation for the solution to \eqref{eq:HJ-intro} as a (countable) composition of the flows $S_{H}(t),S_{H}(-t)$.

\begin{thm}
\label{thm:reduction}Let $\xi$ be a continuous path. Then 
\[
S^{\xi}(0,T)=S^{R_{0,T}(\xi)}(0,T).
\]
\end{thm}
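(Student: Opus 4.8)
The plan is to reduce the general continuous path case to the piecewise linear case that has essentially been handled by the monotonicity lemmata. First I would recall that by Corollary~\ref{cor:continuity} and the extension formula \eqref{takis41}, both $S^{\xi}(0,T)$ and $S^{R_{0,T}(\xi)}(0,T)$ are obtained as sup-norm limits of $S^{\xi^n}(0,T)$ and $S^{R_{0,T}(\xi^n)}(0,T)$ along suitable sequences of piecewise linear approximations; moreover $\xi^n \to \xi$ uniformly forces $R_{0,T}(\xi^n)$ to converge uniformly to $R_{0,T}(\xi)$ (one should check that the extrema $\tau_i$ depend continuously enough on the path in the uniform topology, or at least that the reduced paths do — this is a minor technical point but worth a sentence). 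Hence it suffices to establish the identity $S^{\xi}(0,T) = S^{R_{0,T}(\xi)}(0,T)$ when $\xi$ is itself piecewise linear, and then pass to the limit.

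For a piecewise linear $\xi$, the heart of the argument is an iteration on the number of ``non-skeleton'' vertices, using Lemma~\ref{lem:reduction3} (which localizes the first portion of the path up to the global extremum $\tau_0$, replacing $\xi$ on $[0,\tau_{max}]$ or $[0,\tau_{min}]$ by a single linear segment) together with Lemma~\ref{lem:bump} and Corollary~\ref{cor:Monotone} for the ``bump removal'' on the remaining portion. Concretely, after normalizing so that consecutive increments alternate in sign (any two consecutive increments of the same sign can be merged without changing the path), I would first apply Lemma~\ref{lem:reduction3}: since $\xi$ attains its global extremum at $\tau_0$, the lemma gives a two-sided sandwich, and on the initial interval $[0,\tau_0]$ the path is already reduced (its increments are monotone towards the extremum, so each intermediate vertex can be stripped by Lemma~\ref{lem:bump}, exactly as in the proof of Lemma~\ref{lem:reduction3}), yielding $S^{\xi}(0,\tau_0) = S_{\pm H}(\pm\xi_{0,\tau_0})$. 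Then I would induct on the interval $[\tau_0,T]$: on $[\tau_1,T]$ the path again has a global-extremum structure relative to that subinterval after one more ``reflection,'' and each application peels off one skeleton segment while reducing the complexity of what remains, so finitely many steps (for piecewise linear $\xi$ there are finitely many $\tau_i$ in $[0,T]$) give $S^{\xi}(0,T) = S^{R_{0,T}(\xi)}(0,T)$.

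The main obstacle I anticipate is organizing this iteration cleanly into a statement that is genuinely an equality rather than the one-sided inequalities provided by Lemmas~\ref{lem:bump} and~\ref{lem:reduction3}. Each of those lemmata gives only $\le$ or $\ge$ depending on the sign of the local configuration, so to get equality one must apply the lemma to the appropriate half of the path in each direction: the ``$\le$'' half of Lemma~\ref{lem:reduction3} together with its ``$\ge$'' half, or equivalently exploit the symmetry $S^{-\xi}_{-H}(0,T) = S^{\xi}_H(0,T)$ and $S_{-H}(-t) = S_H(t)$ to convert a bound for one orientation into the reverse bound for the other. The bookkeeping — which vertex is being removed, which of the two monotonicity directions applies, and why the endpoints $\xi_0, \xi_T$ and the global extremum values are preserved throughout — is where care is needed, but no new idea beyond Lemmas~\ref{lem:reduction1}–\ref{lem:reduction3}, Corollary~\ref{cor:Monotone}, and Corollary~\ref{cor:reduction1} should be required. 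A secondary, purely topological point is verifying that $\xi \mapsto R_{0,T}(\xi)$ is continuous in sup-norm along the approximating sequence, which I would handle by noting that a uniformly small perturbation of $\xi$ changes each successive extremum value by at most $2\|\xi - \xi^n\|_\infty$ and perturbs the reduced path uniformly by a comparable amount.
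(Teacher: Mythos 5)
Your route differs from the paper's and, as written, has a genuine gap. You first approximate $\xi$ by piecewise linear paths $\xi^n$ and then need to know that $R_{0,T}(\xi^n)\to R_{0,T}(\xi)$ in sup-norm. The paper never does this. Instead it applies Corollary~\ref{cor:reduction1} --- which has already been established for arbitrary \emph{continuous} paths, not only piecewise linear ones --- directly to $\xi$ restricted to each successive extremum interval $[\tau_{i-1},\tau_i]$, where by construction $\xi$ attains its minimum and maximum at the two endpoints, giving the exact identity $S^{\xi}_H(\tau_{i-1},\tau_i)=S_H(\xi_{\tau_{i-1},\tau_i})$. Composing these for $|i|\leq k$ produces a path $\xi^{(k)}$ that agrees with the full skeleton $R_{0,T}(\xi)$ on $[\tau_{-k},\tau_{k}]$ and with $\xi$ on the remainder, and satisfies $S^{\xi^{(k)}}(0,T)=S^{\xi}(0,T)$ \emph{exactly}. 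Since the oscillation of $\xi$ on the residual intervals tends to zero, $\xi^{(k)}\to R_{0,T}(\xi)$ uniformly and Corollary~\ref{cor:continuity} closes the argument. Crucially this never invokes continuity of the map $\xi\mapsto R_{0,T}(\xi)$.

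That continuity claim is exactly where your proposal has a hole, and the justification you offer for it is false. It is not true that each successive extremum value $\xi(\tau_i)$ changes by at most $2\|\xi-\xi^n\|_\infty$ under a sup-norm perturbation: the defining times $\tau_i$ --- and in particular $\tau_0$ --- are unstable, and the indexing of the extrema can reshuffle. Take $\xi$ attaining its global maximum at two distinct times $t_1<t_2$ with the global minimum at some $s\in(t_1,t_2)$; then $\tau_0=t_2$ and $\xi(\tau_0)=\max\xi$. An $\varepsilon$-bump that makes the earlier maximum strictly larger moves $\tau_0$ to the min-time $s$, so $\xi(\tau_0)$ jumps from $\max\xi$ to $\min\xi$, a change of order one for a perturbation of size $\varepsilon$. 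The reduced path as a whole may still end up close in sup-norm --- in this example it merely shifts a vertex and acquires one more --- but proving that in general requires a careful vertex re-matching argument, not the naive pointwise-in-$i$ bound you state, and it is not the ``minor technical point'' you label it. Your treatment of the piecewise-linear case is fine (if more roundabout than invoking Corollary~\ref{cor:reduction1} directly), so the missing piece is precisely the stability of $R_{0,T}$, which the paper's reduction-interval-by-interval strategy is specifically designed to avoid.
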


\begin{proof}
We apply Corollary \ref{cor:reduction1} inductively to the successive extrema as defined in \eqref{eq:extrema1}, \eqref{eq:extrema2}, \eqref{eq:extrema3}. It only remains to show that this procedure converges for $i\to\pm\infty$. This follows from the continuity of $\xi$ in combination with Corollary \ref{cor:continuity}.

\end{proof}

%
%
%
%
%

\section{The optimality of the domain of dependence}

We consider the initial value problem 
\begin{equation}\label{takis60}
du=|Du|\cdot d{\xi} \ \text{in } \ \R^{d}\times (0,T] \quad u(\cdot,0)=u_0(\cdot) \ \text{in} \  \R^d
\end{equation} 
and prove Theorem \ref{thm:opt}. 

We remark that, in view of the geometric properties of \eqref{takis60}, it is enough to consider the evolution of the level set 
\[
P^{+}(t)=\left\{ x\in\R: \;\;u(x,t)\geq0\right\} .
\]
Indeed, \eqref{takis60} is a level-set PDE, that is, if $u$ is a solution, then also $\Phi(u)$ is a solution\blue{, for each smooth increasing reparametrization $\Phi \in C^1(\R)$}. At this point the choice of the Stratonovich differential in \eqref{takis60} is important (see Souganidis~\cite{S97,S16} and Lions, Souganidis \cite{LS-book} \blue{for more background on level-set PDE and SPDE}). It follows that $P^{+}(t)$ depends only on $P^{+}(0)$ and not on the particular form of $u_0$. In fact, in the case of \eqref{takis60} this can be read off the explicit solution formula, for all $\delta>0$, 
\[
S_{|\cdot|}(\delta)u(x)=\sup_{|x-y|~\leq\delta}u(y),\;\;\;\;\;S_{|\cdot|}(-\delta)u(x)=\inf_{|x-y|~\leq\delta}u(y).
\]
In particular, in  $d=1$ and with the convention that $[c,d]=\emptyset$ if $c>d$, it follows that, for all $\delta \in \R$,
\begin{equation}
\mathcal{S}_{|\cdot|}(\delta)([a,b])=[a-\delta,b+\delta].\label{eq:ev_intervals}
\end{equation}

We notice that, informally, for general initial conditions, $P^+$ expands with  speed $|d{\xi}|$ when $d {\xi}>0$, and contracts with speed $|d {\xi}|$ when $d{\xi}<0$.
\smallskip

The key behind the construction of the lower bound is the observation, already made in \cite{LS-book}, that there is some irreversibility in the dynamics. For example, once a hole is filled, that is two connected components of $P^{+}$ are joined by an increase in $\xi$, it cannot be recreated later when $\xi$ decreases. Symmetrically, if a component of $P^{+}$ is destroyed by a decrease in $\xi$,  it does not re-appear later.  This intuition leads to the lower bound for $\rho_H (\xi,T)$ derived below.
\smallskip

In what follows, to simplify the notation we omit the dependence of the solution operator and the speed of propagation on $H$, that is, we simply write $S$, $S^\xi$ and $\rho(\xi, T)$. We fix $d=1$ and establish first the lower bound in Theorem \ref{thm:opt}
, and then look at  the upper bound. 
Note that considering initial conditions depending only on the first coordinate implies that   the lower  bound also holds for  $d\geq 2$. 

\subsection*{Lower bound for the speed of propagation} The result is stated next.

\begin{prop}
Let $\xi$ be a continuous path. Then
\begin{equation}
\rho_{H}(\xi,T)\geq\|\tilde{R}^\xi(0,T)\|_{TV([0,T])}.
\end{equation}
\end{prop}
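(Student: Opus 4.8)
The plan is to work entirely in dimension $d=1$ with the level-set description, tracking the set $P^+(t)=\{x:u(x,t)\geq 0\}$, and to produce two solutions that agree on a ball of radius essentially $\|\tilde R_{0,T}(\xi)\|_{TV}$ at time $0$ but disagree at the origin at time $T$. Since by Theorem~\ref{thm:reduction} (its level-set analogue) we may replace $\xi$ by its fully reduced path, I would first reduce to the case where $\xi=\tilde R_{0,T}(\xi)$, i.e.\ $\xi$ is piecewise linear, alternating between successive extrema $\tau_{-k}<\dots<\tau_{-1}<\tau_0$, and affine on $[\tau_0,T]$. Write $m:=\|\tilde R_{0,T}(\xi)\|_{TV([0,T])}$, which is the sum of the absolute increments over the reduced pieces. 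Using \eqref{eq:ev_intervals}, the evolution of a finite union of intervals under $S^\xi$ is completely explicit: on an increasing piece of total increase $\delta>0$ each component grows by $\delta$ on each side (merging when they touch), and on a decreasing piece of total decrease $\delta$ each component shrinks by $\delta$ on each side (disappearing if too small).

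The heart of the construction is to choose an initial configuration $P^+(0)$ so that, reading the reduced increments of $\xi$ from $t=0$ to $t=T$ (equivalently, from the earliest extremum $\tau_{-k}$ onward), a ``defect'' placed at distance $m$ from the origin propagates inward and reaches $0$ exactly at time $T$, while a comparison solution with the defect absent stays clear of $0$. Concretely, I would build $P^+(0)$ from nested/adjacent intervals designed so that at each reduced increment the relevant boundary point moves by exactly that increment toward $0$: on an ``up'' move a gap of the right width gets filled (irreversibly), on a ``down'' move a component of the right width gets erased (irreversibly). The irreversibility noted in the excerpt — once a hole is filled by an increase it cannot be recreated by a later decrease, and vice versa — is what guarantees the two solutions, which differ only in whether one such hole/component is present, produce genuinely different values of $u(0,T)$: the whole point is that after the sequence of moves the ``signal'' of the defect has migrated a total Euclidean distance equal to $\sum|\text{increments}| = m$. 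I would make this precise by an explicit recursive definition of the endpoints of the intervals in $P^+(0)$ in terms of the partial sums of the reduced increments, then verify by induction over the pieces $[\tau_{i},\tau_{i+1}]$ that $S^\xi$ maps the two initial sets to sets that differ near $0$.

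For the equality statement when $d=1$, the lower bound is combined with the upper bound $\rho_H(\xi,T)\leq \|R_{0,T}(\xi)\|_{TV}$ from Theorem~\ref{takis3}; but note the two bounds involve $\tilde R$ and $R$ respectively, so I would additionally argue that in $d=1$ the portion of the reduced path after $\tau_0$ (the max--min oscillations that $R$ keeps but $\tilde R$ discards) is irrelevant for the range of dependence — intuitively because after the global extremum $\tau_0$ the operator $S^\xi(\tau_0,T)$ is an alternating composition of dilations and erosions whose net effect on whether two sets agree near a point is governed only by the single affine increment $\xi_{\tau_0,T}$, by an application of Lemma~\ref{lem:reduction3}/Corollary~\ref{cor:reduction1} to that tail. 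Combining, $\rho_H(\xi,T)=\|\tilde R_{0,T}(\xi)\|_{TV}$.

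\textbf{Main obstacle.} The delicate point is the bookkeeping that shows the ``defect'' propagates the full distance $m$ and is never annihilated prematurely: I must choose the widths of the auxiliary intervals in $P^+(0)$ carefully so that on every down-move the defect component is \emph{not} the one that gets erased, and on every up-move it is the gap adjacent to the defect that gets filled — i.e.\ the defect must always sit ``just inside'' the moving frontier. Getting the inequalities defining these widths consistent over all reduced pieces (and checking the degenerate cases where increments are small) is the part requiring real care; the level-set formula \eqref{eq:ev_intervals} makes each individual step routine, but the global consistency of the construction is where the argument could go wrong.
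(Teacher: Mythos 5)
Your plan matches the paper's strategy at the conceptual level: both work in the level-set picture for $d=1$, use \eqref{eq:ev_intervals} to track a union of intervals under alternating dilations and erosions, place a ``defect'' at distance roughly $\|\tilde R_{0,T}(\xi)\|_{TV}$ from the origin, and exploit the irreversibility of hole-filling and component-erasure to show the defect's effect reaches the origin at time $T$. Concretely, the paper sets $P^1=\bigcup_k I_k\cup[0,+\infty)$ and $P^2=(-\infty,x_N]\cup P^1$, so the ``defect'' is the added half-line, and the buffer intervals $I_k$ relay its influence inward.

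Two things to correct, one small and one substantive. The small one: the reduction you invoke is not Theorem~\ref{thm:reduction}. That theorem gives $S^\xi=S^{R_{0,T}(\xi)}$, i.e.\ reduction to the skeleton, not to the \emph{full} skeleton $\tilde R_{0,T}(\xi)$. The full-skeleton identity $S^\xi=S^{\tilde R_{0,T}(\xi)}$ is Proposition~\ref{cor:upper_bound_d1}, valid only for $d=1$ and $x$-independent convex $H$, and it cites an external reference; moreover it appears \emph{after} this lower-bound proposition in the paper. The paper deliberately avoids it: it only assumes $\xi$ is reduced (via Theorem~\ref{thm:reduction}), and obtains $\|\tilde R_{0,T}(\xi)\|_{TV}$ on the right-hand side not by modifying $\xi$ but because the construction is insensitive to the max--min oscillations after $\tau_0$.

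The substantive issue is that the construction you describe is the entire mathematical content of the proof, and you leave it as an acknowledged gap. What makes it work in the paper is the pair of inequalities \eqref{eq:xConstraint}, $2|\xi_{0,\tau_{k-2}}|<x_{k+1}-x_k<2|\xi_{0,\tau_k}|$, which simultaneously ensure: (a) $I_k$ survives alone until $\tau_{2k-1}$ in $P^1$ and then vanishes; (b) the gap to its left closes at $\tau_{2k-2}$ in $P^2$ so the half-line's influence advances; and (c) no premature merging. These inequalities are the ``bookkeeping'' you flag as the main obstacle, and they are not routine; a slightly different choice of widths breaks (a) or (b). Taking the widths to their upper limits and telescoping $|\xi_{\tau_{k-1},\tau_k}|=|\xi_{0,\tau_k}|+|\xi_{0,\tau_{k-1}}|$ is what yields exactly $\|\tilde R_{0,T}(\xi)\|_{TV}$. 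You should also note that a reduced path may have infinitely many $\tau_i$ accumulating at $0$; the paper first handles the case where $\tau_N=0$ for some finite $N$, then runs the construction on $[\tau_N,T]$ for arbitrary $N$ with shifted endpoints $\tilde x_k=x_k+(-1)^k\xi_{0,\tau_N}$ and lets $N\to-\infty$. Without this limiting step the argument is incomplete for general continuous $\xi$.
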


\begin{proof} 
Without loss of generality we assume that $\xi$ is a reduced path. Moreover, since the claim stays the same  if we replace $\xi$ by $-\xi$,  we further assume that $\xi(\tau_{0})=\max_{0\le s\le T}\xi(s)$. 
\smallskip

We first consider the case where $N:=\max\{n \leq 0:\ \tau_{n}=0\}$ is finite. Since $\xi$ is constant if $N=0$, we further assume $N \leq -1$ and fix a sequence $x_i$, $N - 1\leq i \leq 1$ such that  $x_1=0$ and, for all $N < k \leq  0,$
\begin{equation} \label{eq:xConstraint}
2 \left| \xi_{0,\tau_{k-2}} \right| <  x_{k+1} - x_k < 2  \left| \xi_{0,\tau_{k}} \right|.
\end{equation}
Set 
\begin{equation} \label{eq:defIk}
 I_k = \left\{ \begin{array}{ll} [x_{2k-1},x_{2k}] & \mbox{ if } 2k-1 >N \\ \emptyset & \mbox{ otherwise}, \end{array} \right. 
\end{equation}
and 
\begin{equation} \label{eq:defP}
P^{1}= \bigcup_{k \leq 0} I_k \cup [0,+\infty), \;\;\;\;\;P^{2}=(-\infty,x_N]\cup P^{1}.
\end{equation}

\begin{figure}
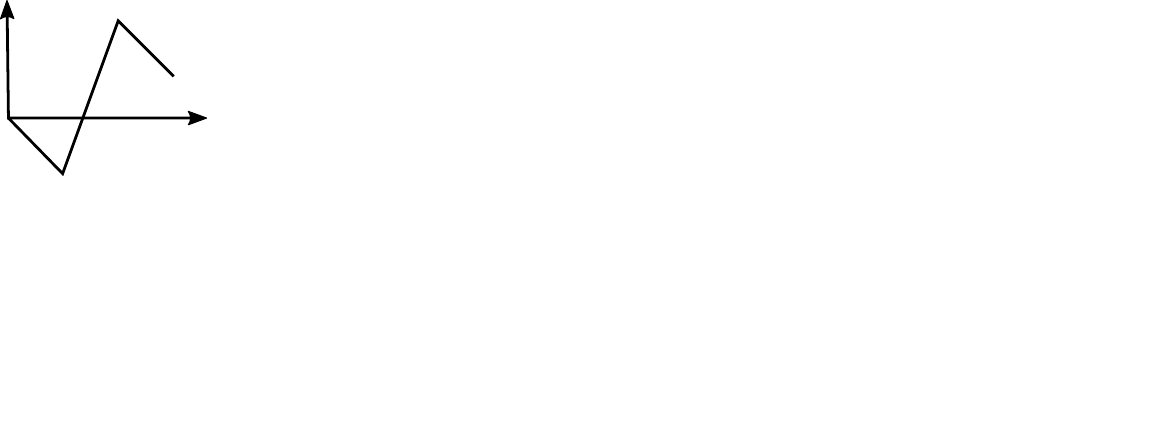
\caption{Lower bound}
\label{fig:lower_bound}
\end{figure}

Since $\xi$ is a reduced path and due to \eqref{eq:ev_intervals}, the evolution of $P^{1}$, $P^{2}$ can be easily described by induction on $k$ as follows.
\smallskip
 
The component $I_{k}$ evolves individually,  that is it does not intersect any other connected components,  before $\tau_{2k}$. This follows from the fact that $\left|x_{2k-1} - x_{2k-2}\right|$ and $\left|x_{2k} - x_{2k+1}\right|$ are smaller than $2 \xi_{0,\tau_{2k}}$.
\smallskip
 
Since $ - 2\xi_{0,\tau_{2k-3}} < \left|x_{2k} - x_{2k-1}\right| < - 2 \xi_{0,\tau_{2k-1}}$,  the component $I_{k}$ of $P^{1}$ disappears at time $\tau_{2k-1}$ but not at any of the earlier $\tau_{i}$'s.
\smallskip

Finally, given that $x_{2k-1} - x_{2k-2} < 2 \xi_{0,\tau_{2k-2}}$,  the component $I_{k}$ of $P^{2}$ has joined the components $I_{j}$ with $j<k$ by the time $\tau_{2k-2}$.
\smallskip

It follows  that 
$$\mathcal{S}(\xi,\tau_0)(P^{1})=[- \xi_{0,\tau_0}, +\infty) \ \ \text{and} \ \  \mathcal{S}^\xi(0,T)(P^{1})=[ - \xi_{0,T}, +\infty) $$
and 
 $$\mathcal{S}^\xi(0,T)(P^{2}) = \R.$$
 
Since $P^{1}$ and $P^{2}$ only differ for $x \leq x_N$, this implies $$\rho_{H}(\xi,T)\geq \left(- \xi_{0,T} - x_N\right)_+.$$

Choosing the $(x_k - x_{k-1})$ as large as possible in \eqref{eq:xConstraint} we obtain 
\begin{equation*}
\rho(\xi,T)\geq -\xi_{0,T} + 2 \sum_{k\leq 0} \left| \xi_{0,\tau_k} \right| = |\xi_{\tau_0,T}| + \sum_{k\leq 0} |\xi_{0,\tau_k} | +  \sum_{k\leq -1} \left| \xi_{0,\tau_k} \right|.
\end{equation*}
Finally, using that, for $k \leq 0$, $\left| \xi_{\tau_{k-1},\tau_k} \right| =  \left| \xi_{0,\tau_k} \right|  +  \left| \xi_{0,\tau_{k-1}} \right| $, we obtain
\begin{equation*}
\rho(\xi,T)\geq |\xi_{\tau_0,T}| + \sum_{k\leq 0} |\xi_{\tau_{k-1},\tau_k} |= \left\| \tilde{R}_{0,T}(\xi)\right\|_{TV}
\end{equation*}
which concludes the proof in the case where $\tau_N = 0$ for some $N$.
\smallskip

Next we treat the general case. We fix $N \leq -1$ arbitrary, and as before we define $x_k$, $I_k$ and $P^1$, $P^2$ satisfying \eqref{eq:xConstraint}, \eqref{eq:defIk} and \eqref{eq:defP}. Now since $\eqref{eq:xConstraint}$ implies that 
\[ x_{2k} - x_{2k-1} > - \inf_{[0,\tau_N]} \xi, \;\; x_{2k+1} - x_{2k} > \sup_{[0,\tau_N]} \xi,
\]
the $I_k$'s do not interact before time $\tau_N$.
\smallskip

Let $\tilde{x}_k = x_k +(-1)^k \xi_{0,\tau_N}$ and
\begin{equation*}
 I_k = \left\{ \begin{array}{ll} [\tilde{x}_{2k-1},\tilde{x}_{2k}] & \mbox{ if } 2k-1 >N \\ \emptyset & \mbox{ otherwise}.\end{array} \right. 
\end{equation*}
Then
\[ \mathcal{S}^\xi(0,\tau_N) (P^1) = \cup_{k \leq 0} \tilde{I_k} \cup [-\xi_{0,\tau_N},+\infty) , \;\;\;  \mathcal{S}^\xi(\tau_N,T) (P^2) =  (-\infty, \tilde{x}_N ]\cup_{k \leq 0} \tilde{I_k}.
\]
Note that
\begin{align*}
\tilde{x}_{k+1} - \tilde{x}_{k} = x_{k+1} - x_{k} + (-1)^k 2 \xi_{0,\tau_N} 
\end{align*}
is bounded from above by
$$ 2 \left| \xi_{0,\tau_{k}}\right|  + (-1)^k 2 \xi_{0,\tau_N} = 2  \left| \xi_{\tau_N,\tau_{k}}\right|$$
and, when $k \geq N+2$,  from below by $2  \left| \xi_{\tau_N,\tau_{k-2}}\right|$. 
\smallskip

Hence, the evolution on $[\tau_N,T]$ is then given as in the case $\tau_N=0$, and we obtain again
$$\mathcal{S}^\xi(0,T)(P^{1})=[ - \xi_{0,T}, +\infty) \ \ \text{and} \ \ 
\mathcal{S}^\xi(0,T)(P^{2})  = \R.$$
Taking again t$x_k - x_{k-1}$ as large as possible yields
\begin{equation*}
\rho(\xi,T)\geq \left\| \tilde{R}_{0,T}(\xi)\right\|_{TV([\tau_N,T])},
\end{equation*}
and letting $N \to - \infty$ finishes the proof.
\end{proof}

\subsection*{Optimality in one space dimension}
We assume $d=1$ and consider $x$-independent Hamiltonians. In this case the representation obtained in Section \ref{sec:Representation-with-reduced} is even simpler, since only the fully reduced path is needed.
\smallskip

\begin{prop}\label{cor:upper_bound_d1} Let $H:\R \to \R$ be continuous and  convex. Then 
\begin{equation} \label{eq:fully-reduced-soln}
u(\cdot,T)=S^{\xi}(0,T)\blue{u_0}=S^{\tilde{R}_{0,T}(\xi)}(0,T)\blue{u_0}, 
\end{equation}
and
\begin{equation}\label{eq:fully-reduced-bound}
\rho_{H}(\xi,T)\le \|H'\|_\infty \|\tilde{R}_{0,T}(\xi)\|_{TV([0,T])}.
\end{equation}
\end{prop}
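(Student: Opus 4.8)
\textbf{Proof plan for Proposition \ref{cor:upper_bound_d1}.}

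The plan is to exploit the fact that in one space dimension, after the global extremum $\tau_0$, the path can be further reduced: the part $\xi|_{[\tau_0,T]}$ contributes only through a single monotone stretch. First I would observe that \eqref{eq:fully-reduced-soln} for the portion before $\tau_0$ is already contained in Theorem \ref{thm:reduction}, since $R_{0,T}(\xi)$ and $\tilde R_{0,T}(\xi)$ coincide on $[0,\tau_0]$; hence it suffices to prove that $S^\xi(\tau_0,T) = S^{\tilde R_{0,T}(\xi)}(\tau_0,T)$, i.e.\ that on $[\tau_0,T]$ one may replace $\xi$ by the single linear segment joining $\xi(\tau_0)$ to $\xi(T)$. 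Without loss of generality assume $\xi(\tau_0)=\max_{[0,T]}\xi$, so that $\xi \le \xi(\tau_0)$ on $[\tau_0,T]$ and the increment $\xi_{\tau_0,T}\le 0$; by Corollary \ref{cor:reduction1} (applied in the reversed-time / min-at-the-end form, after translating so that the relevant interval starts at its maximum), $S^\xi(\tau_0,T)=S_{-H}(-\xi_{\tau_0,T})$, which is exactly $S^{\tilde R_{0,T}(\xi)}(\tau_0,T)$ because the fully reduced path is affine on $[\tau_0,T]$. This is the crux: in $d\ge 2$ the solution after $\tau_0$ still "feels" the max--min oscillations because the geometry of level sets in higher dimension is not reversible in this way, but in $d=1$, or more generally for $x$-independent $H$ where the one-sided flows $S_{\pm H}$ are translations-of-graphs type operators, the monotone piece collapses.

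Once \eqref{eq:fully-reduced-soln} is established, \eqref{eq:fully-reduced-bound} follows immediately by the same argument as Theorem \ref{takis3}: if $u^1,u^2$ solve \eqref{eq:HJ-intro} with $u^1(\cdot,0)=u^2(\cdot,0)$ on $B_R(x)$, then since $S^\xi(0,T)=S^{\tilde R_{0,T}(\xi)}(0,T)$ and $\tilde R_{0,T}(\xi)$ is a BV (indeed piecewise linear, possibly countably many pieces) path, the classical finite-speed-of-propagation bound \eqref{eq:TrivialBound} applied to the equation driven by $\tilde R_{0,T}(\xi)$ gives $u^1(x,T)=u^2(x,T)$ whenever $R > \|H'\|_\infty \|\tilde R_{0,T}(\xi)\|_{TV([0,T])}$. (Here one should note that although $\tilde R_{0,T}(\xi)$ may have infinitely many linear pieces accumulating at $\tau_0$, its total variation is still controlled and the domain-of-dependence estimate passes to the limit via Corollary \ref{cor:continuity}, exactly as in the proof of Theorem \ref{thm:reduction}.)

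The main obstacle I anticipate is making the reduction on $[\tau_0,T]$ fully rigorous in the presence of infinitely many successive extrema $(\tau_i)_{i\le 0}$ accumulating from the left at $\tau_0$, and in justifying that Corollary \ref{cor:reduction1} — stated for a path on $[0,T]$ with max at one endpoint and min at the other — can be applied to the sub-path on $[\tau_0,T]$. Concretely, on $[\tau_0,T]$ the path need not start at a minimum, so one cannot directly invoke Corollary \ref{cor:reduction1}; instead I would first apply Lemma \ref{lem:reduction3} to $\xi|_{[\tau_0,T]}$ to peel off a monotone initial segment down to $\tau_{\min}$ of that sub-interval, then iterate/use density (Corollary \ref{cor:continuity}) to collapse the remaining oscillations, using crucially that all of them lie on one side of $\xi(\tau_0)$. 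An alternative, cleaner route is to invoke directly Theorem \ref{thm:reduction} to get $S^\xi(0,T)=S^{R_{0,T}(\xi)}(0,T)$ and then prove the strictly one-dimensional statement that for a reduced path whose first extremum is a global max, the flow over $[\tau_0,T]$ equals $S_{-H}(\xi_{\tau_0,T})$; this last identity is where $d=1$ (via the explicit envelope formulae $S_{\pm H}$ acting monotonically on sub/super-level sets, cf.\ \eqref{eq:ev_intervals}) does the work, and it is the one step I would be most careful to write out in detail.
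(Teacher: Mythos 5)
The paper disposes of this proposition in two lines: it cites Hoel--Karlsen--Risebro--Storr{\o}sten \cite{HKRS} for \eqref{eq:fully-reduced-soln} when $H$ is smooth and strictly convex, then passes to general convex $H$ by approximation. Your attempt instead tries to derive \eqref{eq:fully-reduced-soln} directly from the Section~\ref{sec:Representation-with-reduced} machinery, and this is where the gap is.

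The step you flag as ``the crux'' --- that $S^\xi(\tau_0,T)=S_{-H}(-\xi_{\tau_0,T})$ --- is in fact \emph{false} as an operator identity, even in $d=1$. Corollary~\ref{cor:reduction1} does not apply to $\xi|_{[\tau_0,T]}$ because that sub-path starts at its max but does \emph{not} end at its min (the min on $[\tau_0,T]$ is attained at $\tau_1<T$). And the identity genuinely fails: take $H(p)=|p|$, $u(z)=-z^2$, and the sub-path realizing $S_H(1)\circ S_H(-2)$ (up $1$ after down $2$). Then $S_H(1)\circ S_H(-2)u(x)=-\left(\max(0,|x|-1)+2\right)^2$, which equals $-4$ on $|x|\le 1$, whereas $S_H(-1)u(x)=-(|x|+1)^2>-4$ there. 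What is true --- and what makes the proposition hold --- is that $S^\xi(\tau_0,T)$ and $S_H(\xi_{\tau_0,T})$ agree \emph{after} the large dilation $S_H(\xi_{\tau_{-1},\tau_0})$ coming from the monotone run from the global min to the global max. This is a genuinely one-dimensional fact about convex Hamilton--Jacobi flows (it fails for $d\ge 2$, as the paper's $\dot\xi=(4,-2,1)$ example shows), and it is \emph{not} a consequence of Lemmas~\ref{lem:reduction1}--\ref{lem:reduction3} or Corollary~\ref{cor:Monotone} alone: those lemmas give one-sided inequalities (e.g.\ $S_H(-b)\circ S_H(c)\ge S_H(c-b)$) that hold in every dimension and therefore cannot by themselves force the $d=1$ equality. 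Your fallback --- ``apply Lemma~\ref{lem:reduction3} to $\xi|_{[\tau_0,T]}$, then iterate'' --- runs into the same obstruction: the resulting inequalities point the wrong way and never close to an equality. So the one step you defer is precisely the step the paper outsources to \cite{HKRS}, and it needs a new argument (a Lax--Oleinik / envelope computation specific to $d=1$), not just the monotonicity lemmas already in the paper.

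The derivation of \eqref{eq:fully-reduced-bound} from \eqref{eq:fully-reduced-soln}, and the remark about passing to the limit through the accumulating pieces at $\tau_0$ via Corollary~\ref{cor:continuity}, are fine.
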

\begin{proof}
  The claim is shown for $H$ smooth and strictly convex in 
  \cite{HKRS} using  a regularization argument. The result extends to convex $H$ by approximation, since \eqref{eq:HJ-intro} is stable under the passage to the limit in $H$ by standard viscosity theory; see \cite{LS-book}. 
\end{proof}

When  $d\ge2$, \eqref{eq:fully-reduced-soln} is not true in general. Indeed, this can be easily seen by the counter-example depicted in Figure~\ref{fig:no-full-reduction-1} and Figure~\ref{fig:no-full-reduction-2}, which correspond to the continuous, piece-wise linear path $\xi$ with
\[
 \dot\xi(t)~=\left\{ \begin{array}{ll}
4 & \mbox{ for }(0,1)\\
-2 & \mbox{ on }(1,2)\\
1 & \mbox{ on }(2,3)
\end{array}\right. \ \text{and} \ \dot{\tilde{R}}_{0,3}(\xi) =\left\{ \begin{array}{ll}
4 & \mbox{ on }(0,1)\\[1mm]
-\frac{1}{2} & \mbox{ on }(1,3);
\end{array}\right.
\]
it is easy to observe that in this case $S_{|\cdot|}^{\xi}(0,3) \ne S_{|\cdot|}^{\tilde{R}_{0,3}(\xi)}(0,3)$.
\smallskip

Note however that the claims in Proposition \ref{cor:upper_bound_d1} hold in arbitrary dimension for $H(p) = \frac{1}{2} |p|^2$ (cf.\ Gassiat and Gess~\cite{GG16})  and more generally for a class of uniformly convex $H$ (cf.\ Lions and Souganidis~\cite{LS18++}).

\begin{figure}
\includegraphics[width=4cm]{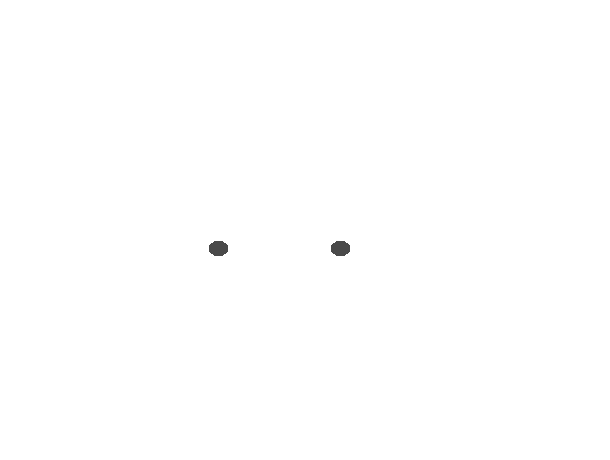}\includegraphics[width=4cm]{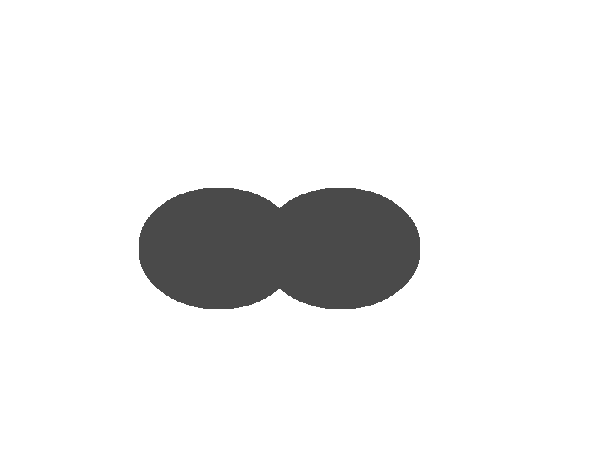}\includegraphics[width=4cm]{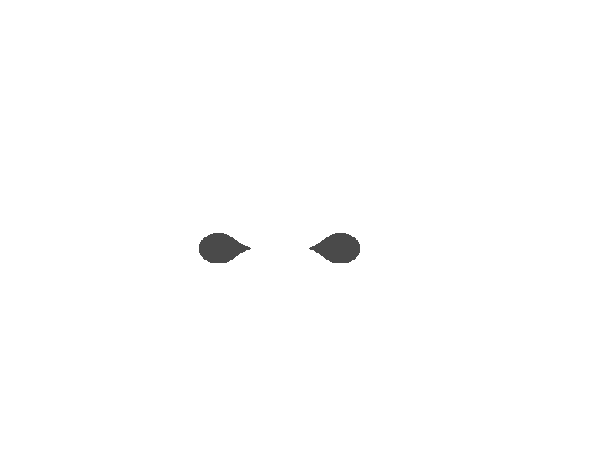}\includegraphics[width=4cm]{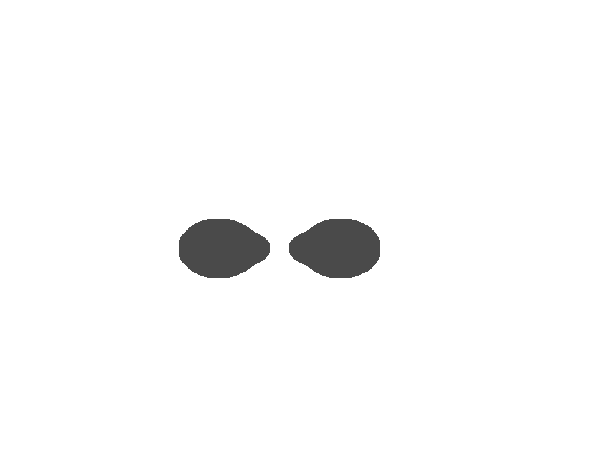}\caption{Evolution of $S_{|\cdot|}^{\xi}(0,t)$ at $t=0$, $t=1$, $t=2$, $t=3$}\label{fig:no-full-reduction-1}
\end{figure}

\begin{figure}
\includegraphics[width=4cm]{no-full-reduction-0}\includegraphics[width=4cm]{no-full-reduction-1}\includegraphics[width=4cm]{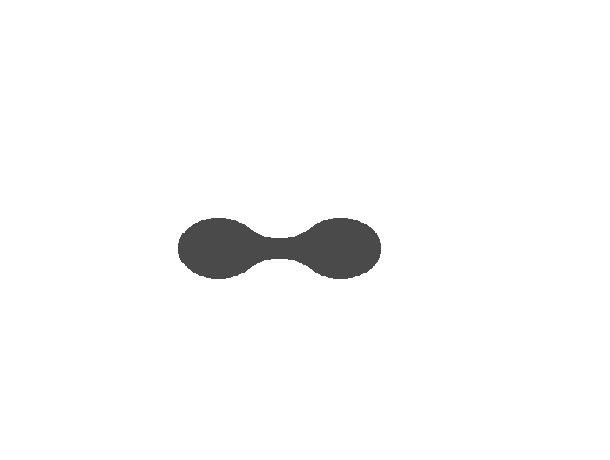} 
\caption{Evolution of $S_{|\cdot|}^{\tilde{R}(\xi)}(0,t)$ at $t=0$, $t=1$, $t=3$}\label{fig:no-full-reduction-2}
\end{figure}
\smallskip

Since \eqref{eq:fully-reduced-soln} is not valid in general in dimension $d \geq 2$, the speed of propagation may  depend on the total variation of the full reduced path $R_{0,T}(\xi)$. Note that we do not know, even for $H = |\cdot|$,  if one always has equality in \eqref{eq:main-speed} in that case.  The following proposition gives an example of a situation where $\rho_H(\xi,T) = \|R_{0,T}(\xi)\|_{TV([0,T])} >\|\tilde{R}_{0,T}(\xi)\|_{TV([0,T])}$. 

\begin{prop}
Let $\delta_1 > \delta_2 > \delta _3 > 0$ and $\xi$ continuous on $[0,3]$ with
\[
\dot{\xi} = \left\{ \begin{array}{ll} \delta_1 & \mbox{ on }(0,1), \\
- \delta_2 & \mbox{ on } (1,2), \\ 
+\delta_3 & \mbox{ on }(2,3). \end{array} \right.
\]
Then 
\begin{equation*}
\rho_{|\cdot|}(\xi,3)=\delta_1+\delta_2 + \delta_3=\|\xi\|_{TV([0,3])}.
\end{equation*}
\end{prop}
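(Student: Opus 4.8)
The plan is to prove the two inequalities $\rho_{|\cdot|}(\xi,3)\le\delta_1+\delta_2+\delta_3$ and $\rho_{|\cdot|}(\xi,3)\ge\delta_1+\delta_2+\delta_3$ separately, with the reduced path doing the work on the upper side and an explicit pair of initial configurations on the lower side. For the upper bound, note that since $\delta_1>\delta_2$ and (by choice of the successive-extrema sequence) $\tau_0=1$, the reduced path $R_{0,3}(\xi)$ coincides with $\xi$ itself: $\xi$ is already reduced, because its two ``turning points'' at $t=1$ (a max of $\xi$ on $[0,3]$ since $\delta_1>\delta_2$, indeed $\xi(1)=\delta_1>\delta_1-\delta_2+\delta_3=\xi(3)$ iff $\delta_2>\delta_3$, which holds) and at $t=2$ (the min on $[1,3]$) are genuine. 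Hence Theorem~\ref{takis3} gives $\rho_{|\cdot|}(\xi,3)\le\|R_{0,3}(\xi)\|_{TV([0,3])}=\delta_1+\delta_2+\delta_3$, using $L=\|H'\|_\infty=1$.

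For the lower bound I would construct, in $d=1$ and for $H(p)=|p|$, two initial level sets $P^1\subset P^2$ differing only far to the left, and track their evolution using the explicit interval dynamics $\mathcal S_{|\cdot|}(\delta)([a,b])=[a-\delta,b+\delta]$ from \eqref{eq:ev_intervals}. The idea mirrors the proof of the preceding proposition (the lower-bound proposition for $\tilde R$), but now the \emph{third} leg, the increase by $\delta_3$ on $(2,3)$, is used to extract the extra $\delta_3$ of range of dependence that the fully reduced path would miss. Concretely: start $P^1$ with a far-away component $[x_{-1},x_0]$ together with $[0,\infty)$, chosen so that during $(0,1)$ the component $[x_{-1},x_0]$ grows but does \emph{not} merge with $[0,\infty)$ (which requires $x_{-1}-x_0$ and $0-x_0$ to exceed $2\delta_1$ appropriately, i.e. the gap $-x_0>2\delta_1$), during $(1,2)$ the component $[x_{-1},x_0]$ shrinks and disappears (requires $x_0-x_{-1}<2\delta_2$), so that it has vanished by time $2$; meanwhile $P^2=(-\infty,x_{-1}]\cup P^1$ has, during $(0,1)$, its leftmost piece $(-\infty,x_{-1}]$ grow and swallow $[x_{-1},x_0]$ and then, crucially, continue so that it merges with $[0,\infty)$. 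The point is that in $P^2$ the left half-line reaches, by time~$1$, position $x_{-1}+\delta_1$, and one arranges $x_{-1}+\delta_1=0$, i.e. $x_{-1}=-\delta_1$, $x_0$ slightly less than $-\delta_1/?$... I will choose the constants so that $\mathcal S^\xi(0,3)(P^1)=[-( -\xi_{0,3}),\infty)=[\delta_2-\delta_1-\delta_3,\infty)$ hmm; the cleanest route is: after $\tau_0=1$ the configuration $\mathcal S^\xi(0,1)(P^1)$ is $[-\delta_1,\infty)$ (the far component already eaten or about to vanish), while $\mathcal S^\xi(0,1)(P^2)$ is $(-\infty,\infty)=\R$ once the merge has happened; then $\mathcal S^\xi(1,3)$ shrinks by $\delta_2$ and re-expands by $\delta_3$, giving $\mathcal S^\xi(0,3)(P^1)=[-\delta_1+\delta_2-\delta_3,\infty)$ and $\mathcal S^\xi(0,3)(P^2)=\R$. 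So the solutions differ at the point $x=-\delta_1+\delta_2-\delta_3-\varepsilon$... that only yields $\delta_1-\delta_2+\delta_3$, not the full sum.

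The fix, exactly as in the $\tilde R$-lower-bound proof, is to nest \emph{several} disappearing components to the left, using the full oscillation: put components $I_k$ far to the left with gaps tuned to $2|\xi_{0,\tau_k}|$ so that each survives until a prescribed later extremum, thereby each contributing twice its range to the final spread; then the merge in $P^2$ propagates all the way across. Carrying this out for the three-leg path, the total contribution assembles to $|\xi_{\tau_0,3}|+\sum_{k\le 0}|\xi_{\tau_{k-1},\tau_k}|=\delta_1+\delta_2+\delta_3$, matching the upper bound. I would organize this by: (1) recording $\tau_0=1$, the extrema of $\xi$, and that $R_{0,3}(\xi)=\xi$; (2) invoking Theorem~\ref{takis3} for ``$\le$''; (3) invoking the lower-bound proposition's construction verbatim for ``$\ge\|\tilde R_{0,3}(\xi)\|_{TV}$'' and then adding one more left-component whose role is to pick up the last increase $\delta_3$, checking the three interval-interaction conditions by \eqref{eq:ev_intervals}; (4) concluding $\|\tilde R_{0,3}(\xi)\|_{TV}=\delta_1+\delta_2$ but $\|R_{0,3}(\xi)\|_{TV}=\delta_1+\delta_2+\delta_3$, and that in fact the sharper construction reaches $\|R_{0,3}(\xi)\|_{TV}$. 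The main obstacle is step (3): verifying that the far-left components can be placed so that, simultaneously, (a) in $P^1$ each such component lives and dies at the right times without ever touching $[0,\infty)$, and (b) in $P^2$ the leftmost half-line, boosted by both increases $\delta_1$ and $\delta_3$, does reach and merge with $[0,\infty)$ — this is a bookkeeping of a handful of inequalities among $\delta_1,\delta_2,\delta_3$ and the $x_k$, entirely analogous to \eqref{eq:xConstraint}, but one must be careful that the middle decrease $\delta_2$ (being smaller than $\delta_1$ but larger than $\delta_3$) does not undo the merge in $P^2$ nor fail to kill the components in $P^1$.
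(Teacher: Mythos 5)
Your plan cannot work, and the reason is structural: the lower bound $\rho_{|\cdot|}(\xi,3)\ge\delta_1+\delta_2+\delta_3$ is \emph{impossible} to achieve in $d=1$. Proposition \ref{cor:upper_bound_d1} shows that for $d=1$ and $x$-independent convex $H$, the speed of propagation is controlled by the \emph{fully} reduced path, $\rho_H(\xi,T)\le \|H'\|_\infty\|\tilde R_{0,T}(\xi)\|_{TV}$. For this $\xi$ one has $\tau_0=1$, and since $\delta_1>\delta_2>\delta_3$ the path is monotone decreasing from $\xi(1)=\delta_1$ to $\xi(3)=\delta_1-\delta_2+\delta_3$ after $\tau_0$, so $\|\tilde R_{0,3}(\xi)\|_{TV}=\delta_1+(\delta_2-\delta_3)$. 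In $d=1$, therefore, $\rho_{|\cdot|}(\xi,3)\le\delta_1+\delta_2-\delta_3<\delta_1+\delta_2+\delta_3$, and no amount of nesting disappearing intervals will close the $2\delta_3$ gap. You notice midway that your first construction ``only yields $\delta_1-\delta_2+\delta_3$, not the full sum'' and then assert that the nested construction gives ``$|\xi_{\tau_0,3}|+\sum_{k\le 0}|\xi_{\tau_{k-1},\tau_k}|=\delta_1+\delta_2+\delta_3$'', but that computation is wrong: $|\xi_{\tau_0,3}|=|\delta_3-\delta_2|=\delta_2-\delta_3$ and $\sum_{k\le0}|\xi_{\tau_{k-1},\tau_k}|=\delta_1$, so the construction of the preceding proposition gives only $\|\tilde R_{0,3}(\xi)\|_{TV}=\delta_1+\delta_2-\delta_3$, consistent with the one-dimensional cap.

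The paper's proof is genuinely two-dimensional, and the two-dimensionality is the whole point of the example (it appears precisely to show that \eqref{eq:fully-reduced-soln} and \eqref{eq:fully-reduced-bound} fail in $d\ge2$). The initial sets are $P_1=[0,L]\times\{-\delta_1,+\delta_1\}$ (two horizontal segments a vertical distance $2\delta_1$ apart, with $0<L<2\delta_2$) and $P_2=P_1\cup(\{-\delta_1\}\times\R)$ (the same plus a vertical line). After expansion by $\delta_1$ the two arms of $P_1$ just touch but leave small holes near $(-\eta,0)$ and $(L+\eta,0)$; after contraction by $\delta_2$ these holes have grown into $\delta_2$-balls; after the final expansion by $\delta_3$ the union $B_{\delta_2}(-\eta,0)\cup B_{\delta_2}(L+\eta,0)$ still contains the $\delta_3$-ball around the test point $(L-\delta_2+\delta_3-\eta_1,0)$, which therefore stays outside $\mathcal S^\xi_{|\cdot|}(0,3)P_1$. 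By contrast, $P_2$'s vertical line fills those holes by $t=1$, so the rectangle $[-2\delta_1+\delta_2,L-\delta_2]\times[\cdot]$ survives to $t=2$ and the test point is reached at $t=3$. This uses circular geometry that is unavailable in 1D: in one dimension a ``hole'' between two expanding intervals cannot survive a subsequent contraction–expansion the way a 2D hole between two touching discs can. Your upper-bound step is fine (it is exactly Theorem~\ref{takis3} plus the observation that $\xi$ is already its own reduced path), but the lower-bound step needs the planar construction, not the interval construction.
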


\begin{proof}
Fix $0 < L < 2 \delta_2$ and $\eta >0$, and consider initial conditions (see  Figure~\ref{fig:high-dim-propagation-1} and Figure~\ref{fig:high-dim-propagation-2}) 
\[ P_1 = [0,L] \times \{- \delta_1, + \delta_1\}, \;\;\;\; P_2 = P_1 \cup (\{-\delta_1\} \times \R). \] 

Then $(-\eta,0), (L+\eta,0) \not\in \mathcal{S}^{\xi}_{|\cdot|}(0,1)P_1$, and, hence,  $$(B_{\delta_2}(-\eta,0) \cup B_{\delta_2}(L+\eta,0)) \cap \mathcal{S}^{\xi}_{|\cdot|}(0,2)P_1 = \emptyset.$$
 Since $L<2\delta_2$, for $\eta$ small enough, the interior of $B_{\delta_2}(-\eta,0) \cap B_{\delta_2}(L+\eta,0)$ is non-empty. It follows that, for all $\eta_1 \in (0,\eta)$ small enough,
  $$B_{\delta_3}(L-\delta_2+\delta_3-\eta_1,0)\subseteq B_{\delta_2}(-\eta,0) \cup B_{\delta_2}(L+\eta,0),$$ and, consequently, $$(L-\delta_2+\delta_3-\eta_1,0) \not\in \mathcal{S}^{\xi}_{|\cdot|}(0,3)P_1.$$

We next note that $[-2\delta_1,L]\times [-2\delta_1,2\delta_1] \subseteq \mathcal{S}^{\xi}_{|\cdot|}(0,1)P_2$ and thus $$[-2\delta_1+\delta_2,L-\delta_2]\times [-2\delta_1+\delta_2,2\delta_1-\delta_2] \subseteq \mathcal{S}^{\xi}_{|\cdot|}(0,2)P_2.$$ It follows that $$B_{\delta_3}(L-\delta_2+\delta_3-\eta,0) \cap \mathcal{S}^{\xi}_{|\cdot|}(0,2)P_2 \ne \emptyset,$$ and, hence, for each $\eta>0$, $$(L-\delta_2+\delta_3-\eta,0) \in \mathcal{S}^{\xi}_{|\cdot|}(0,3)P_2.$$

In conclusion, for all $\eta>0$ small enough,
\[ (L- \delta_2 + \delta_3 - \eta, 0) \in \mathcal{S}^{\xi}_{|\cdot|}(0,3)(P^2) \setminus  \mathcal{S}^{\xi}_{|\cdot|}(0,3)(P^1) \]
so that $\rho_{|\cdot|}(\xi,3) \geq \delta_1 + L - \delta_2 + \delta_3 - \eta$. Letting $L \to 2 \delta_2$ and $\eta \to 0$ finishes the proof.

\begin{figure}
\includegraphics[width=4cm]{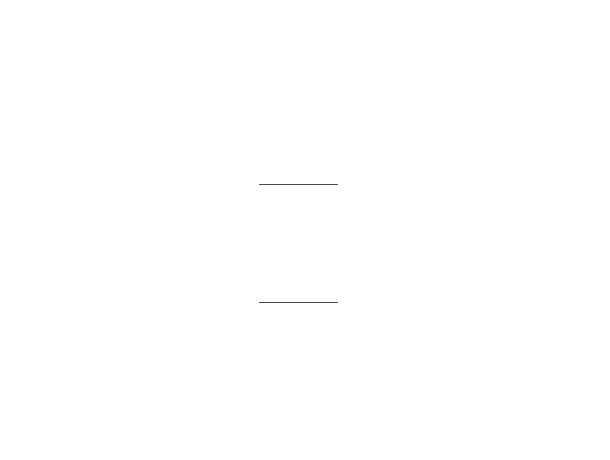}\includegraphics[width=4cm]{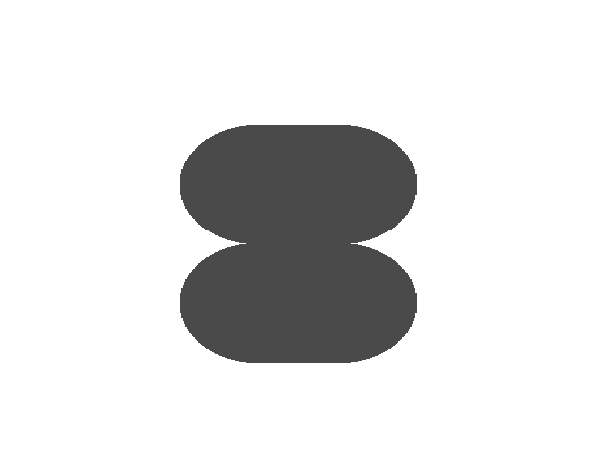}\includegraphics[width=4cm]{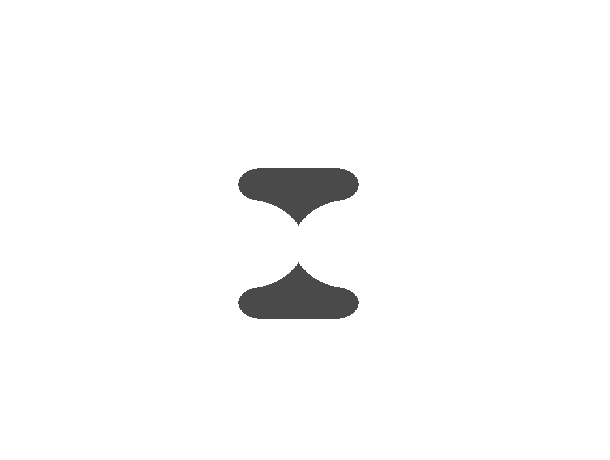}\includegraphics[width=4cm]{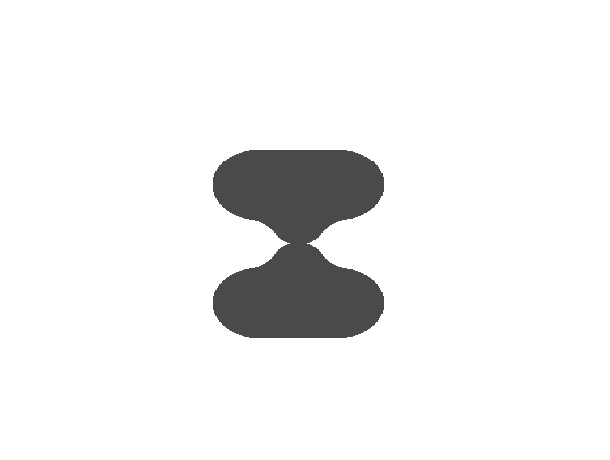}\caption{Evolution of $S_{|\cdot|}^{\xi}(0,\cdot)P_{1}$}\label{fig:high-dim-propagation-2}
\end{figure}

\begin{figure}
\includegraphics[width=4cm]{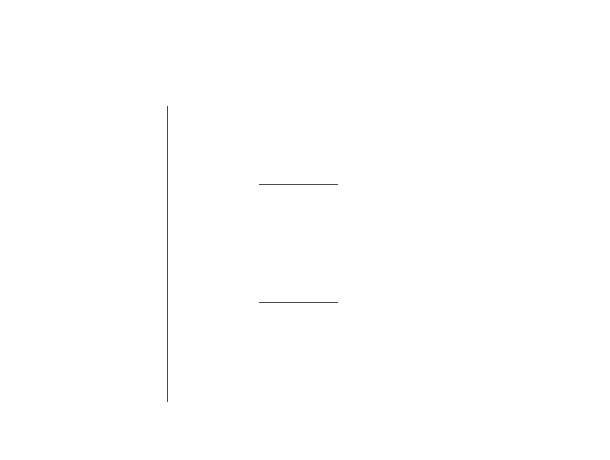}\includegraphics[width=4cm]{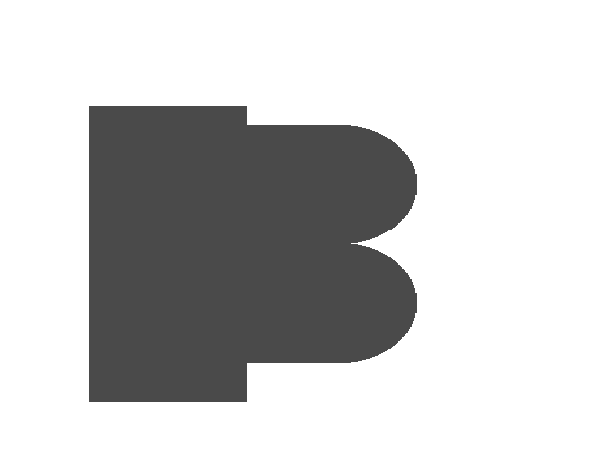}\includegraphics[width=4cm]{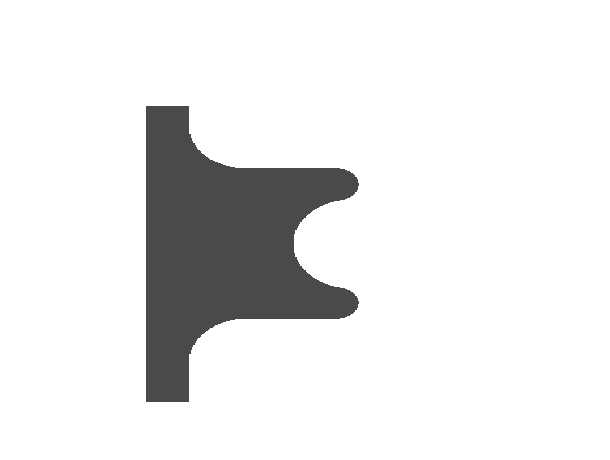}\includegraphics[width=4cm]{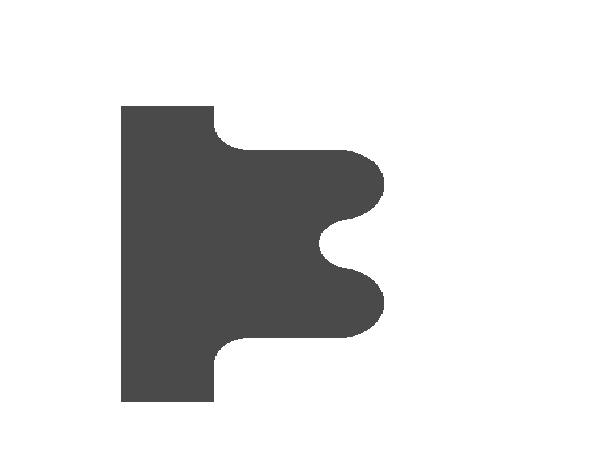}\caption{Evolution of $S_{|\cdot|}^{\xi}(0,\cdot)P_{2}$}\label{fig:high-dim-propagation-1}
\end{figure}

\end{proof}


\section{The Brownian case}

We begin with some preliminary discussion and a few results that are needed for the proof of Theorem~\ref{takis4}.
\smallskip

The key observation in the proof of Theorem \ref{takis4} is that the length of $R_{0,T}(B)$ on $[0,\tau_{0}]$, where  $\tau_0$ is given by \eqref{eq:extrema1}, is the same as that of a left-continuous path obtained by removing all excursions of $B$ between its minimum and maximum.
\smallskip

We fix an arbitrary continuous path $B:[0,+\infty)\to\R$ with $B(0)=0$. Let $M(t):=\sup_{s\leq t}B(s)$, $m(t):=\inf_{s\leq t}B(s)$, $R(t):=M(t)-m(t)$, recall that for $a\geq0$  
\[
\theta(a)=\inf\left\{ t\geq0,\;\;R(t)=a\right\}, 
\]
and,   $r\geq0$, define  
\[
S(r):=B(\theta(r)).
\]
Then $S$ is a left continuous process with right limits, the dynamics of which are simple to describe (see the proof of Lemma \ref{lem:RL} below): Away from  the jumps, $S$ has a drift given by $sign(S(r))$, and the jumps   are   given by $\Delta S(r):=S(r^{+})-S(r)=-sign(S(r))r$. In particular,  
\begin{equation}
L(r):=\left\Vert S\right\Vert _{TV([0,r])}=r+\sum_{0\leq s<r}s\mathbf{1}_{\Delta S(s)\neq0}.\label{eq:defL}
\end{equation}

The following lemma relates $L$ and  $R_{0,T}(B)$. 
\begin{lem}
\label{lem:RL} For all $T\geq0$, \[
\left\Vert R_{0,T}(B)\right\Vert _{TV([0,\tau_{0}])}=L(R(T)).
\]
\end{lem}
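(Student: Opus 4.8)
The statement to be proved is Lemma \ref{lem:RL}: for all $T \geq 0$, $\|R_{0,T}(B)\|_{TV([0,\tau_0])} = L(R(T))$. The strategy is to set up a bijection between the "nodes" $(\tau_i)_{i \leq 0}$ of the reduced path that lie in $[0,\tau_0]$ and the jump times of the left-continuous process $S$, and to match up the increments contributing to the two total variations term by term.

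\textbf{Step 1: Describe the dynamics of $S$ precisely.} First I would prove the assertion about $S(r) = B(\theta(r))$ made informally in the text: between jumps, $S$ moves with unit speed in direction $\operatorname{sgn}(S(r))$, and at a jump $\Delta S(r) = -\operatorname{sgn}(S(r)) r$. The point is that $\theta$ is the (right-continuous) generalized inverse of the oscillation $R(t) = M(t) - m(t)$. As $r$ increases, either $B$ is currently setting a new running maximum (if $S(r) > 0$, i.e. the last extremum reached was a max at level $M$, with $m$ fixed), in which case $R$ increases at the rate $B$ increases, so $S(r) = M(\theta(r))$ grows linearly with slope $1$; symmetrically if $S(r) < 0$ the running minimum is decreasing and $S$ has slope $-1$; and when $B$ reverses and travels from its current max all the way down past the old min (an event that happens at a single value of $r$ because in between $R$ is constant and $\theta$ is flat/jumps), $S$ jumps from $+M$ down to $-(M + (\text{new excursion depth}))$, a jump of size $-(R(\theta(r^+))) = -r$ in the appropriate sign convention; hence $\Delta S(r) = -\operatorname{sgn}(S(r)) r$. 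From this, \eqref{eq:defL} follows because the continuous part contributes $\int_0^r |S'| = r$ and each jump at time $s$ contributes $|\Delta S(s)| = s$.

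\textbf{Step 2: Match nodes of $R_{0,T}(B)$ on $[0,\tau_0]$ with jumps of $S$.} Recall $\tau_0$ is the last time $B$ attains its global max or min on $[0,T]$, and the $\tau_i$, $i \leq 0$, are the successive extrema obtained by going backwards (see \eqref{eq:extrema3}): $\tau_{-1}$ is the first time $B$ hits the extreme opposite to $\xi(\tau_0)$ on $[0,\tau_0]$, etc. Going backwards in $i$ from $\tau_0$ is exactly the same combinatorial data as going forwards in $r$ through the jump times of $S$ on $[0, R(T)]$: each $\tau_i$ with $i \leq -1$ corresponds to an excursion of $B$ between its (then-current) running max and running min that gets "completed" — i.e. a reversal realizing a new extreme — and this is precisely a jump time $s_i$ of $S$ with $s_i = R(\tau_i^{-})$ or a similarly identified value. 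I would make this precise by an induction on $|i|$, reading off the definitions \eqref{eq:extrema1}--\eqref{eq:extrema3} against the definition of $\theta$; the finitely-many-or-not dichotomy (whether $\tau_N = 0$ for some $N$) should not cause trouble because $L$ and $TV$ both handle the countable sum the same way.

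\textbf{Step 3: Match the increments.} With the correspondence $i \leftrightarrow s_i$ in hand: the increment of $R_{0,T}(B)$ across the node $\tau_i$ has absolute value $|\xi_{\tau_{i-1},\tau_i}| = |B(\tau_i) - B(\tau_{i-1})|$, which equals (old range at that stage) $+$ (new range increment) $= R(\tau_{i-1}^{?}) + $ something $= s_i$ in the $S$-picture, exactly the jump size $|\Delta S(s_i)| = s_i$; and the total "upward final run" from $\tau_0$ backwards — i.e. the terminal piece $r \mapsto r$ accumulated as the continuous drift — accounts for the leading $r$ in \eqref{eq:defL} evaluated at $r = R(T)$. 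Summing, $\|R_{0,T}(B)\|_{TV([0,\tau_0])} = R(T) + \sum_{s_i} s_i = L(R(T))$.

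\textbf{Main obstacle.} The delicate point is Step 1 — rigorously verifying the jump/drift structure of $S$ from the definition $S(r) = B(\theta(r))$, in particular that the jumps are at isolated values of $r$, that $\Delta S(r) = -\operatorname{sgn}(S(r)) r$ exactly (getting the constant right, not off by the depth of the new excursion), and that $S$ has no other contribution to its total variation. Equivalently one must check that $\theta$ is constant exactly on the $r$-intervals corresponding to excursions strictly between the running max and min, and strictly increasing when $B$ is extending a record. Once this normal-form for $S$ is nailed down, Steps 2–3 are a careful but essentially bookkeeping exercise in unwinding the recursive definition of the reduced path; the potential for sign/indexing errors there is the secondary difficulty.
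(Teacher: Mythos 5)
Your plan follows the same route as the paper's proof: write $\|R_{0,T}(B)\|_{TV([0,\tau_0])}=\sum_{i\leq 0}|B(\tau_i)-B(\tau_{i-1})|$, describe the jump/drift structure of $S(r)=B(\theta(r))$, and match increments; so this is not a genuinely different approach. The paper handles what you flag as the main obstacle (Step~1) in a way worth noting: rather than proving a global normal form for $S$ and then matching, it works locally on each interval $r\in(R(\tau_{i-1}),R(\tau_i)]$, using only that $\theta(r)\le\tau_i$ and monotonicity of $\theta$ to get $m(\theta(r))=m(\tau_{i-1})$ (or the symmetric identity), hence $S(r)=B(\tau_{i-1})+r$; this yields at once the jump of size $R(\tau_{i-1})$ at the left endpoint and slope $1$ on the interval, and then $\|S\|_{TV([R(\tau_{i-1}),R(\tau_i)])}=R(\tau_i)=|B(\tau_i)-B(\tau_{i-1})|$, so the whole sum telescopes without ever needing the global ``$\Delta S(r)=-\operatorname{sgn}(S(r))\,r$ at isolated $r$'' claim in full generality. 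Your Step~3 accounting is a bit loose: $|B(\tau_i)-B(\tau_{i-1})|=R(\tau_i)$ is \emph{not} ``exactly the jump size'' of $S$ across the node; it is the jump of size $R(\tau_{i-1})$ \emph{plus} the drift $R(\tau_i)-R(\tau_{i-1})$ accrued on the interval (equivalently, you may redistribute the $R(T)=\sum_i\big(R(\tau_i)-R(\tau_{i-1})\big)$ drift term so that $R(\tau_i)$ is read as the jump at $r=R(\tau_i)$ for $i\le-1$ and the leading $r$ accounts for $i=0$ --- your final identity is correct either way, but the sentence as written conflates the two decompositions). You should also note that ``going backwards in $i$'' corresponds to \emph{decreasing} $r=R(\tau_i)$, not increasing, since $R$ is nondecreasing in $t$ and $\tau_0>\tau_{-1}>\cdots$. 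These are small bookkeeping slips in an otherwise correct plan; the paper's local-interval argument is exactly the clean way to discharge the technical verification you rightly single out as the crux.
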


\begin{proof} Recall \eqref{eq:extrema1}, \eqref{eq:extrema2} and  \eqref{eq:extrema3} and note that
\[
\left\Vert R_{0,T}(B)\right\Vert _{TV([0,\tau_{0}])}=\sum_{i\leq0}\left\Vert R_{0,T}(B)\right\Vert _{TV([\tau_{i-1},\tau_{i}])}=\sum_{i\leq0}\left|B(\tau_{i-1})-B(\tau_{i})\right|.
\]

Fix $i\leq0$ and assume that $B(\tau_{i-1})=m(\tau_{i-1})<0$. If $r\in(R(\tau_{i-1}),R(\tau_{i})]$, then  monotonicity of $\theta$ and the definition of $\tau_i$ give $\theta(r)\leq\theta(R(\tau_{i}))=\tau_{i}$,  and, hence,  $m(\theta(r))=m(\tau_{i-1})$. 
\smallskip

Moreover, for $r\in(R(\tau_{i-1}),R(\tau_{i})]$, we have $B(\theta(r))=M(\theta(r))$ and thus 
\begin{align*}
  r=R(\theta(r))=M(\theta(r))-m(\tau_{i-1}) = B(\theta(r))-B(\tau_{i-1}).
\end{align*}
In conclusion, for all   $r\in(R(\tau_{i-1}),R(\tau_{i})],$
\[
S(r)=B(\theta(r))=B(\tau_{i-1})+r=S(R(\tau_{i-1}))+r,
\]
that is  $S$ has a jump of size $R(\tau_{i-1})$ at $r=R(\tau_{i-1})$ and is affine with slope $1$ on $(R(\tau_{i-1}),R(\tau_{i})]$.
\smallskip

If $B(\tau_{i-1})=M(\tau_{i-1})>0$, the same reasoning shows that $S$ has a jump of size $-R(\tau_{i-1})$ at $r=R(\tau_{i-1})$ and is  affine with slope $-1$ on $(R(\tau_{i-1}),R(\tau_{i})]$. 
\smallskip

Finally we get 
\begin{align*}
  L(R(\tau_0))
  &=\left\Vert S\right\Vert _{TV([0,R(\tau_0)])}
  =\sum_{i\leq0}\| S\|_{TV([R(\tau_{i-1}),R(\tau_i)])}\\
  &=\sum_{i\leq0}\left|R(\tau_{i-1})\right|+\left|R(\tau_{i})-R(\tau_{i-1})\right|
  =\sum_{i\leq0}R(\tau_{i})=\sum_{i\leq0}\left|B(\tau_{i})-B(\tau_{i-1})\right|.
\end{align*}
In view of the definition of $\tau_0$ and $R$ we have $R(\tau_0)=R(T)$ and thus $L(R(T))= L(R(\tau_0))$, which finishes the proof.
\end{proof}
Next we  assume that $B$ is a linear Brownian motion on a probability space $(\Omega,\mathcal{F},\mathbb{P})$ and we describe some of the properties of the time change $S$ that we will use below. The results have been  obtained in Imhof \cite[Theorem p. 352]{Imhof92} and Vallois \cite[Theorem 1]{Vallois95}.
\smallskip

The first result is that 
\begin{equation}
\left\{ s>0,\;\Delta S(s)\neq0\right\} \mbox{ is a Poisson point process on }(0,+\infty)\mbox{ with intensity measure }\frac{ds}{s}.\label{eq:PoissonS}
\end{equation}
For fixed $r>0$  set  $\sigma_{0}(r):=r$ and, for all  $k\in \N,$ define the successive jump times by
\[
\sigma_{k+1}(r)=\sup\{s<\sigma_{k}(r): \Delta S(s)\neq0\}, 
\]
It follows from  \eqref{eq:PoissonS} that  
\begin{equation}
\left(\sigma_{n+1}(r)/\sigma_{n}(r)\right)_{n\geq0}\overset{(d)}{=}(U_{n})_{n\geq0},
\label{eq:Unif}
\end{equation}
where $(U_{n})_{n\geq0}$ is a sequence of i.i.d.\ random variables each with  uniform distribution on $[0,1]$, and $\overset{(d)}{=}$ denotes equality in the sense of distributions.
\smallskip

Moreover, if 
\[
\theta_{n}(r):=\inf\left\{ t\geq0,R(t)=\sigma_{n}(r)\right\} ,
\]
then 
\begin{equation}\label{eq:Bessel}
\left( \left|B((\theta_{n+1}(r)+t)\wedge\theta_{n}(r))-B(\theta_{n+1}(r))\right| : t\geq0\right) _{n\geq0}
\overset{(d)}{=} \left( X^{n}(t\wedge\zeta_{n}(r));t\geq0\right) _{n\geq0}
\end{equation}
where the  processes $X^{n}$ are i.i.d. $3$-dimensional Bessel processes, independent from $S$, starting from $0$ and 
\[
\zeta_{n}(r):=\inf\left\{ t>0: \ X^{n}(t)=\sigma_{n}(r)\right\} .
\]

At this point we recall that,  in view of the scaling properties of the Brownian motion, it is enough to prove Theorem~\ref{takis4}  with  $T=1$, that is  to study the reduced path ${R}_{0,1}(B)$.
\smallskip

To prove \eqref{eq:RGaussian} we need the following two lemmata.

\begin{lem} \label{lem:LR1} There exists $C>0$ such that,  for all $r,x\geq0$,  
\begin{equation}
\P\left(R(1)\geq r| L(r)-r\geq x\right)\leq C\exp\left(-Cx^{2}\right).
\end{equation}
\end{lem}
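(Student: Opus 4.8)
The plan is to exploit the conditional independence structure provided by \eqref{eq:PoissonS}--\eqref{eq:Bessel}. Conditioning on $L(r)-r \geq x$ is, by \eqref{eq:defL}, a statement about the jump times $(\sigma_n(r))_{n\geq 0}$ of $S$, that is, about the Poisson point process of jumps; equivalently by \eqref{eq:Unif} about the sequence $\sigma_{n+1}/\sigma_n \overset{(d)}{=} U_n$ of i.i.d.\ uniforms. The event $\{R(1)\geq r\}$ says that the time $\theta(r)$ needed for the range to reach $r$ is at most $1$. Now I would decompose $\theta(r) = \sum_{n\geq 0}\bigl(\theta_n(r) - \theta_{n+1}(r)\bigr)$ (telescoping down through the successive jump times, with $\theta_0(r)=\theta(r)$), and observe that each increment $\theta_n(r)-\theta_{n+1}(r)$ is, by \eqref{eq:Bessel}, distributed as the hitting time $\zeta_n(r)$ of level $\sigma_n(r)$ by an independent $3$-dimensional Bessel process started at $0$ — these Bessel processes being independent of $S$, hence of the $\sigma_n$'s. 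So, conditionally on the $\sigma_n$'s, $\theta(r)$ is a sum of independent hitting times.

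The key quantitative input will be a lower bound, in the sense of stochastic domination, on the hitting time $\zeta$ of level $\sigma$ by a $3$-dimensional Bessel process $X$ from $0$: by Brownian scaling $\zeta \overset{(d)}{=} \sigma^2 \zeta_1$ where $\zeta_1 = \inf\{t: X(t)=1\}$ is a fixed positive random variable with, say, $\E[e^{-\lambda\zeta_1}]$ controlled for $\lambda>0$ (equivalently, $\P(\zeta_1 \leq \ve)$ decays like $e^{-c/\ve}$ as $\ve\to 0$, a standard small-ball estimate for Bessel/Brownian hitting times). Thus, conditionally on $(\sigma_n)$, one has for any $\lambda>0$
\[
\P\bigl(\theta(r)\leq 1 \,\big|\, (\sigma_n)_n\bigr) \leq e^{\lambda}\,\E\Bigl[e^{-\lambda\theta(r)}\,\Big|\,(\sigma_n)_n\Bigr] = e^{\lambda}\prod_{n\geq 0}\E\bigl[e^{-\lambda\sigma_n(r)^2 \zeta_1^{(n)}}\bigr],
\]
and each factor is $\leq \varphi(\lambda \sigma_n(r)^2)$ for $\varphi(u):=\E[e^{-u\zeta_1}]$, a function with $\varphi(u)\to 0$ exponentially fast as $u\to\infty$. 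Meanwhile, on the event $\{L(r)-r\geq x\}$ we have $\sum_{n\geq 1}\sigma_n(r) \geq x$ by \eqref{eq:defL}, which forces the $\sigma_n(r)$ to be ``not too small too fast''; in particular one can extract a reasonably large partial sum of the $\sigma_n^2$, or combine $\varphi$-bounds across enough indices $n$, to get $\prod_n \varphi(\lambda\sigma_n(r)^2) \leq \exp(-c\,\psi(x))$ for a suitable $\psi$.

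The remaining work is to optimize in $\lambda$ and in the choice of how many terms to use, to land on the stated Gaussian rate $\exp(-Cx^2)$. The natural heuristic: conditioned on $\sum_{n\geq 1}\sigma_n(r)\geq x$, the dominant contribution has $\sigma_1(r) \approx x$ (the jumps decay geometrically, $\sigma_n \approx x \prod_{i<n}U_i$, so the largest is of order $x$), hence $\sigma_1(r)^2 \approx x^2$, and $\E[e^{-\lambda x^2\zeta_1}]$ with $\lambda$ of order $1$ already gives $e^{-c x^2}$ after the small-ball estimate $\varphi(u)\lesssim e^{-c\sqrt u}$... — wait, one must be careful: the small-ball rate for a Bessel hitting time gives $\varphi(u) = \E[e^{-u\zeta_1}] \asymp e^{-c\sqrt{u}}$, which would only yield $e^{-cx}$ from one term. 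To recover the full $x^2$ one should instead not Laplace-transform but directly use $\P(\theta(r)\leq 1) \leq \P(\zeta_0(r)\leq 1) = \P(\zeta_1 \leq \sigma_0(r)^{-2})$ and note $\sigma_0(r)=r$, which is not yet of size $x$ — so the right move is to sum enough increments: $\theta(r) \geq \sum_{n=0}^{k}\sigma_n(r)^2\zeta_1^{(n)}$ with the $\sigma_n$ of order $r$ down to order (roughly) $x/k$... Honestly, \textbf{the main obstacle} is precisely this bookkeeping: balancing how many jump times $\sigma_n$ one keeps (each contributing an independent small-ball factor) against how small they are allowed to be under the conditioning $\sum\sigma_n\geq x$, so that the product of small-ball probabilities multiplies up to $e^{-Cx^2}$. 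I expect this to come down to: on $\{\sum_{n\geq1}\sigma_n \geq x\}$ there are at least $\sim x$ indices $n$ with $\sigma_n \gtrsim 1$ is false in general, but one does get (after discarding a light-tailed event on the $U_n$'s) that $\sum_n \sigma_n^2 \gtrsim x$ — wait no, $\sum\sigma_n^2$ can be as small as order $1$ even when $\sum\sigma_n = x$ if one $\sigma_n\approx x$... but then that single $\sigma_n^2\approx x^2$ and one term suffices provided the small-ball estimate is $e^{-cu}$ rather than $e^{-c\sqrt u}$. Resolving which regime actually controls the bound — a single large jump giving one $\sigma_n^2 \approx x^2$ term, versus many moderate jumps — and verifying the small-ball estimate is strong enough in that regime, is the crux; I would handle it by splitting on whether $\max_n \sigma_n(r) \geq x/2$ or not, bounding the first case by a single Bessel small-ball probability at scale $x^2$ and the second by a large-deviation estimate for $\sum U_1\cdots U_{n}$ combined with a product of small-ball factors.
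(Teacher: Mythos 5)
Your setup coincides with the paper's: you condition on the jump structure of $S$, write $\theta(r)\overset{(d)}{=}\sum_{n\geq0}\sigma_n(r)^2\tilde\zeta_n$ with $\tilde\zeta_n$ i.i.d.\ unit-level hitting times of $3$-dimensional Bessel processes independent of the $\sigma_n$'s (via \eqref{eq:Unif}, \eqref{eq:Bessel} and scaling), and you observe that the conditioning event forces $\sum_{n\geq1}\sigma_n(r)\geq x$. Up to that point you are exactly on the paper's track. The gap is where you yourself locate it: you never convert the linear information $\sum_n\sigma_n(r)\geq x$ into the bound $\P\bigl(\sum_n\sigma_n(r)^2\tilde\zeta_n\leq1\bigr)\leq e^{-Cx^2}$, and the fallback you sketch does not close it. First, the case split on $\max_n\sigma_n(r)$ is vacuous: the $\sigma_n(r)$ are decreasing with $\sigma_0(r)=r$, so the maximum is always $r$, and the hard case $r\ll x$ is precisely your ``many moderate jumps'' regime. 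Second, in that regime a ``product of small-ball factors'' does not control the probability that the \emph{sum} $\sum_n\sigma_n^2\tilde\zeta_n$ is small (one cannot simply require each summand to be small separately), and a ``large-deviation estimate for $\sum U_1\cdots U_n$'' is not available to you: the statement is a conditional probability given exactly the event that this sum is large, so the estimate has to hold for every admissible configuration of the $\sigma_n$'s (or be integrated against the conditional law), not merely for typical ones.

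The ingredient you are missing is the paper's deterministic H\"older--Jensen inequality: for any nonnegative $(\alpha_n)$, $\sum_n\alpha_n^2\tilde\zeta_n\geq\bigl(\sum_n\alpha_n\bigr)^3\bigl(\sum_n\alpha_n^{1/2}\tilde\zeta_n^{-1/2}\bigr)^{-2}$, combined with the observation that $\tilde\zeta_n^{-1/2}$ is equidistributed with $\sup_{[0,1]}\tilde X^n$ and hence has Gaussian tails; a Chernoff bound for the subgaussian sum $\sum_n\alpha_n^{1/2}\tilde\zeta_n^{-1/2}$ then yields $\P\bigl(\sum_n\alpha_n^2\tilde\zeta_n\leq s\bigr)\leq\exp\bigl(-Cs^{-1}(\sum_n\alpha_n)^2\bigr)$, and taking $\alpha_n=U_1\cdots U_n$, $s=r^{-2}$, $\sum_n\alpha_n\geq x/r$ gives $e^{-Cx^2}$ in one stroke, uniformly over the regimes you were trying to separate. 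Your hesitation about the Laplace transform is only half justified: $\E[e^{-u\tilde\zeta}]\asymp e^{-c\sqrt u}$ does give only $e^{-cx}$ from a single term with $\lambda=O(1)$, but optimizing $e^{\lambda s}\prod_n\E[e^{-\lambda\alpha_n^2\tilde\zeta_n}]$ over $\lambda$ (with $\lambda$ of order $(\sum_n\alpha_n/s)^2$) also produces $\exp(-c(\sum_n\alpha_n)^2/s)$ when the $\alpha_n$ are not too small; the genuinely delicate configurations are those with very many very small jumps, which your plan does not address and which the paper's H\"older step is designed to absorb. As written, the proposal is an accurate diagnosis of the difficulty rather than a proof.
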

\begin{proof}
Note that $R(1)\geq r$ is equivalent to $\theta(r)\leq1$.
\smallskip

It follows from \eqref{eq:Unif} and \eqref{eq:Bessel} that there exist i.i.d.\ uniformly distributed in $[0,1]$ random variables $U_{n}$ such that
\[
L(r)=r+\sum_{n>1}\sigma_{n}(r)=r\left(1+\sum_{n=0}^{\infty}U_{1}\cdots U_{n}\right).
\]
Moreover, in view of the scaling properties of the  Bessel processes, 
\[
\theta(r)=\sum_{n\geq0}~\left(\theta_{n}(r)-\theta_{n+1}(r)\right))\overset{(d)}{=}\sum_{n\geq0}\zeta_{n}(r))\overset{(d)}{=}\sum_{n\geq0}(\sigma_{n}(r))^{2}\tilde{\zeta}_{n},
\]
where 
\[
\tilde{\zeta}_{n}=\inf\left\{ t>0: \tilde{X}^{n}(t)=1\right\}.
\]
Since the i.i.d.\ Bessel random variables $\tilde{X}^{n}$ are independent from the random variables $U_{n}$, it follows that 
\[
(\theta(r),L(r))\overset{(d)}{=}\left(r^{2}\sum_{n=0}^{\infty}(U_{1}\ldots U_{n})^{2}\tilde{\zeta}_{n},r\left(1+\sum_{n=0}^{\infty}U_{1}\cdots U_{n}\right)\right).
\]
Using  once again scaling and the fact that Bessel processes have the Gaussian tails, we also find that, for some $C_0>0$ and all $n\in\N$, 
\begin{equation}
\P\left(\tilde{\zeta}_n^{-1/2}\geq y\right)=\P\left(\sup_{0\leq t\leq1}\tilde X^n_{t}\geq y\right)\leq\exp(-C_0y^{2}).\label{eq:tau12}
\end{equation}
Then, Jensen's inequality yields that, for any nonnegative sequence $\alpha_{n}$, 
\[
\left(\sum_{n\geq0}\alpha_{n}^{2}\tilde{\zeta}_{n}\right)\geq\left(\sum_{n\geq0}\alpha_{n}\right)^{3}\left(\sum_{n\geq0}\alpha_{n}^{1/2}\tilde{\zeta}_{n}^{-1/2}\right)^{-2}. 
\]
and thus
\[
\P\left(\sum_{n=0}^{\infty}\alpha_{n}^{2}\tilde{\zeta}_{n}\leq s\right)\leq\P\left(\sum_{n\geq0}\alpha_{n}^{1/2}\tilde{\zeta}_{n}^{-1/2}\geq s^{-1/2}\left(\sum_{n\geq0}\alpha_{n}\right)^{3/2}\right).
\]
On the other hand, \eqref{eq:tau12}, the independence of the $\tilde{\zeta}_{n}$ and a straightforward  Chernoff bound yield, for all $y \geq 0$ and some $C>0$, 
\[
\P\left(\sum_{n\geq0}\alpha_{n}^{1/2}\tilde{\zeta}_{n}^{-1/2}\geq y \right) \leq \exp\left(-C\frac{y^2}{\sum_{n\geq 0} \alpha_n}\right)
\] 
 for some $C>0$,
so that
\[
\P\left(\sum_{n=0}^{\infty}\alpha_{n}^{2}\tilde{\zeta}_{n}\leq s\right)\leq\exp\left(-Cs^{-1}\left(\sum_{n \geq 0}\alpha_{n}\right)^{2}\right).
\]
Applying the inequality above with  $\alpha_{n}=U_{1}\ldots U_{n}$ and $s=r^{-2}$,  we get 
\[
\P\left(\theta(r)\leq1| L(r)-r \ge x\right)\leq\exp\left(-Cr^{2}(x/r)^{2}\right)\leq\exp(-Cx^{2}).
\]
\end{proof}
\begin{lem} \label{lem:LR2} There exists $C>0$ such that,  for all $x\geq2r\geq0$ and  $\delta\in(0,1]$, 
\begin{equation}
\P\left(L(r(1+\delta))-L(r)-r\delta\geq x\right)\leq C\exp\left(-\frac{x}{2r}|\ln(\delta)|\right).
\end{equation}
\end{lem}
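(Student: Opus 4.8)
The plan is to read off the increment $L(r(1+\delta))-L(r)-r\delta$ directly from \eqref{eq:defL}, and then reduce the estimate to a one-sided tail bound for a single Poisson random variable. Subtracting \eqref{eq:defL} evaluated at $r(1+\delta)$ and at $r$, the linear parts cancel against $r\delta$, leaving
\[
L(r(1+\delta))-L(r)-r\delta=\sum_{r\leq s<r(1+\delta)}s\,\mathbf{1}_{\Delta S(s)\neq0},
\]
a sum over the jump times of $S$ in the window $[r,r(1+\delta))$, each weighted by its location. By \eqref{eq:PoissonS} the jump times form a Poisson point process of intensity $ds/s$, so the number $N$ of them in this window is Poisson distributed with parameter $\lambda:=\int_r^{r(1+\delta)}\frac{ds}{s}=\ln(1+\delta)$; note that $\lambda\leq\delta\leq1$.

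Next I would dominate the weighted sum by $2rN$. This is the one place where the hypothesis $\delta\leq1$ enters: every jump time $s$ in the window satisfies $s<r(1+\delta)\leq2r$. Writing $k:=x/(2r)$, which is $\geq1$ because $x\geq2r$, this gives
\[
\P\left(L(r(1+\delta))-L(r)-r\delta\geq x\right)\leq\P(N\geq k)=\P(N\geq\lceil k\rceil).
\]

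Finally I would invoke an elementary Poisson upper tail estimate: for an integer $m\geq1$, pulling $\lambda^m/m!$ out of the series $e^{-\lambda}\sum_{j\geq m}\lambda^j/j!$ and comparing the remaining terms with $e^\lambda$ yields $\P(N\geq m)\leq\lambda^m/m!\leq\lambda^m$. Applying this with $m=\lceil k\rceil$ and using $\lambda\leq\delta\leq1$ together with $m\geq k$ gives $\P(N\geq k)\leq\delta^k=\exp\left(-\frac{x}{2r}|\ln\delta|\right)$, which is the asserted bound (one may take $C=1$; any $C\geq1$ also absorbs the degenerate case $\delta=1$).

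I do not expect a genuine obstacle here: every step is routine and the argument is essentially bookkeeping — verifying the exact cancellation of the drift terms, noting that $\delta\leq1$ is precisely what bounds each weight by $2r$, and that $x\geq2r$ is precisely what forces $k\geq1$ so that the Poisson tail bound is applied with an integer exponent $\geq1$.
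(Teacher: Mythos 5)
Your proof is correct and follows essentially the same approach as the paper: both bound the increment by $2r$ times the number of jump times in $[r,r(1+\delta))$, identify that count as Poisson with parameter $\ln(1+\delta)\leq\delta$, and finish with a Poisson upper-tail estimate at the threshold $x/(2r)\geq1$. The only (minor) difference is the specific tail bound — the paper optimizes a Chernoff bound $\inf_\gamma\E[e^{\gamma P}]e^{-\gamma y}$, while you truncate the Poisson series directly to get $\P(N\geq m)\leq\lambda^m/m!\leq\lambda^m$; your version is a touch more elementary and yields $C=1$.
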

\begin{proof}
Note that 
\begin{align*}
L(r(1+\delta))-L(r)-r\delta= & \sum_{r\leq s < r(1+\delta)}s1_{\Delta S(s)\neq0}\\
\leq & \;\;\;2r \#\left\{ s\in [r,r(1+\delta)),\;\;\Delta S(s)\neq0\right\} .
\end{align*}
In view of  \eqref{eq:PoissonS}, $\#\left\{ s\in [r,r(1+\delta)),\;\;\Delta S(s)\neq0\right\} $ is a Poisson random variable with parameter
  $$\int_{r}^{r(1+\delta)}\frac{du}{u}=\ln(1+\delta)\leq\delta.$$ 

In addition,  any   Poisson random variable $P$ with parameter $\delta$ satisfies the classical inequality 
\begin{align*}
\P(P\geq y)&\leq\inf_{\gamma}\left\{ \E[e^{\gamma P}]e^{-\gamma y}\right\} =\inf_{\gamma}\exp(-\gamma y+\delta(e^{\gamma}-1))=\exp(-y\ln\left(\frac{y}{\delta}\right)+y-\delta)\\
&\leq C\exp(-y|\ln(\delta)|).
\end{align*}
It follows that 
\begin{align*}
\P\left(L(r(1+\delta))-L(r)-r\delta\geq x\right) & \leq C\exp\left(-\frac{x}{2r}|\ln(\delta)|\right).
\end{align*}
\end{proof}
%

We are now ready for the proof of Theorem~\ref{takis4}.

\begin{proof}[Proof of Theorem \ref{thm:BM}] We begin with \eqref{eq:RPoisson} and 
note that on the interval $[0,\theta(1)]$ we have $\tau_{0}=\theta(1)$. 
\smallskip

It then follows from the definition of $\theta(1)$ that   $R(\tau_0)=R(\theta(1))=1$ and thus, using  Lemma \ref{lem:RL}, we find 
\[
\left\Vert R_{0,\theta(1)}(B)\right\Vert _{TV([0,\theta(1)])}=L(R(\theta(1)))=L(1).
\]
We further note that, in view of  \eqref{eq:Unif},  we have that
\[
L(1)\overset{(d)}{=}1+U_{0}+U_{0}U_{1}+U_{0}U_{1}U_{2}+\ldots,
\]
where the random variables $U_{i}$ are i.i.d.\ uniformly distributed on $[0,1]$. 
\smallskip

It now follows from  \cite[Theorem 3.1]{GG96} that $L(1)$ has Poissonian tails, that is, 
\[
\underset{x\to \infty}\lim \frac{\ln\P(L(1)\ge x)}{x\ln x}\to -1.
\]

We present now the proof of \eqref{eq:RGaussian}. Throughout the argument below, $C$ will denote a constant whose value may change from line to line.
\smallskip

We first note that 
\[
\left\Vert R_{0,1}(B)\right\Vert _{TV([0,1])}\leq\left\Vert R_{0,1}(B)\right\Vert _{TV([0,\tau_{0}])}+\left\Vert R_{0,1}(B)\right\Vert _{TV([\tau_{-1},1])}.
\]
Moreover, the symmetry of Brownian motion under time reversal gives  
\[
\left\Vert R_{0,1}(B)\right\Vert _{TV([\tau_{-1},1])}\overset{(d)}{=}\left\Vert R_{0,1}(B)\right\Vert _{TV([0,\tau_{0}])}. 
\]
It follows that it suffices to bound the tail probabilities of $\left\Vert R_{0,1}(B)\right\Vert _{TV([0,\tau_{0}]))}$ and, hence, in view of  Lemma \ref{lem:RL}, the tail of $L(R(1))$.
\smallskip

Fix $\gamma \in (0,2)$ and let $\alpha <\gamma$ be such that $\gamma = \frac{2(1+\alpha)}{3}$. For a fixed $x>0$, let $r_{0}:=\frac{x}{4}$ and, for $k \in \N$,   $r_{k+1}:=r_{k}(1+e^{-x^\alpha})^{-1}$. It is immediate that there exists an $N$ such that  $N\leq Ce^{Cx^\alpha}$ and  $r_{N}\leq x^{-1}$.
\smallskip

Moreover, 
\begin{align*}
\P\left(L(R(1))\geq x\right) & \leq\sum_{k=0}^{N-1}\P\left(r_{k+1}\leq R(1)\leq r_{k};L(r_{k})\geq x\right)+\P\left(R(1)\geq\frac{x}{4}\right)+\P\left(L(r_{N})\geq x\right).
\end{align*}
The second term on the right hand side is bounded by $\exp(-Cx^{2})$ since $R(1)$ has Gaussian tails. Moreover, Brownian scaling implies that $L(\tau t) \overset{(d)}{=} \tau L(t)$ for all $\tau>0$ and thus
\[
\P(L(r_{N})\geq x)\leq \P\left(L\big(\frac{1}{x}\big)\geq x\right) \leq \P(L(1)\geq x^{2}) \leq \exp(-Cx^{2}).
\]
In addition, we note that, for $k\in\{0,\ldots,N-1\}$, 
\begin{align*}
 & \;\;\;\;\;\P\left(r_{k+1}\leq R(1)\leq r_{k};L(r_{k})\geq x\right)\\
 & \leq\P\left(r_{k+1}\leq R(1);L(r_{k+1})\geq\frac{x}{2}\right)+\P\left(r_{k+1}\leq R(1);L(r_{k})-L(r_{k+1})\geq\frac{x}{2}\right).
\end{align*}
Lemma \ref{lem:LR1} implies that 
\[
\P\left(r_{k+1}\leq R(1);L(r_{k+1})\geq\frac{x}{2}\right)\leq\P\left(r_{k+1}\leq R(1); L(r_{k+1})-r_{k+1}\geq\frac{x}{4}\right)\leq C \exp(-Cx^{2}).
\]
Then, the Cauchy-Schwarz inequality and Lemma \ref{lem:LR2} give 
\begin{align*}
P\left(r_{k+1}\leq R(1);L(r_{k})-L(r_{k+1})\geq\frac{x}{2}\right) & \leq P\left(r_{k+1}\leq R(1)\right)^{1/2}\P\left(L(r_{k})-L(r_{k+1})\geq\frac{x}{2}\right)^{1/2}\\
 & \leq C\exp(-Cr_{k+1}^{2})\exp\left(-C\frac{x^{1+\alpha}}{r_{k+1}}\right)\\
 & \leq C\exp\left(-Cx^{\frac{2(1+\alpha)}{3}}\right).
\end{align*}
It follows that  
\[
\P\left(L(R(1))\geq x\right)\leq Ce^{Cx^\alpha}e^{-Cx^{\gamma}}\leq C e^{-Cx^{\gamma}}.
\]
\end{proof}

\section*{Acknowledgements}
Gassiat was partially supported by the ANR via the project
ANR-16-CE40-0020-01. Souganidis was partially supported by the National Science Foundation grant DMS-1600129 and  the Office for Naval Research grant N000141712095. Gess was partially supported by the DFG through CRC 1283.

 \bibliographystyle{plain}
\bibliography{Gassiat_Gess_Lions_Souganidis-2019-Speed_of_propagation}

\end{document}